\documentclass[12pt, article]{amsart}
\usepackage{amsmath, amsthm, amscd, amsfonts, amssymb, graphicx, color}
\usepackage[bookmarksnumbered, colorlinks, plainpages]{hyperref}
\hypersetup{colorlinks=true,linkcolor=red, anchorcolor=green, citecolor=cyan, urlcolor=red, filecolor=magenta, pdftoolbar=true}

\textheight 22.5truecm \textwidth 14.5truecm
\setlength{\oddsidemargin}{0.35in}\setlength{\evensidemargin}{0.35in}

\setlength{\topmargin}{-.5cm}

\newtheorem{theorem}{Theorem}[section]
\newtheorem{lemma}[theorem]{Lemma}
\newtheorem{proposition}[theorem]{Proposition}
\newtheorem{corollary}[theorem]{Corollary}

\theoremstyle{definition}

\theoremstyle{remark}

\newcommand{\tr}{{\rm{tr}}}
\usepackage[all]{xy}
\numberwithin{equation}{section}

\newtheorem*{theorem*}{Theorem}

\begin{document}
\title[Advanced refinements of Young and Heinz inequalities]
{Advanced refinements of Young and Heinz inequalities}
\author[M. Sababheh]{M. Sababheh}
\address{Department of Basic Sciences, Princess Sumaya University For Technology, Al Jubaiha, Amman 11941, Jordan.\newline
 Department of Mathematics, University of Sharjah, Sharjah, United Arab Emirates.}
\email{sababheh@psut.edu.jo, msababheh@sharjah.ac.ae, sababheh@yahoo.com}

\author[M.S. Moslehian]{M. S. Moslehian}
\address{Department of Pure Mathematics, Ferdowsi University of Mashhad, P. O. Box 1159, Mashhad 91775, Iran}
\email{moslehian@um.ac.ir}

\subjclass[2010]{15A39, 15B48, 47A30, 47A63.}

\keywords{Young's inequality; norm inequalities; Heinz inequality.}
\maketitle
\begin{abstract}
In this article, we prove several multi-term refinements of Young type inequalities for both real numbers and operators improving several known results. Among other results, we prove
\begin{eqnarray*}
A\#_{\nu}B&+&\sum_{j=1}^{N}s_{j}(\nu)\left(A\#_{\alpha_j(\nu)}B+A\#_{2^{1-j}+\alpha_j(\nu)}B-2A\#_{2^{-j}+\alpha_j(\nu)}B\right)\leq A\nabla_{\nu}B,
\end{eqnarray*}
for the positive operators $A$ and $B$, where $0\leq \nu\leq 1, N\in\mathbb{N}$ and $\alpha_j(\nu)$ is a certain function. Moreover, some new Heinz type inequalities involving the Hilbert-Schmidt norm are established.
\end{abstract}

\section{introduction}

The simple inequality
\begin{equation}\label{original_young}
a^{\nu}b^{1-\nu}\leq \nu a+(1-\nu)b, a,b>0, 0\leq \nu\leq 1
\end{equation}
is the celebrated Young inequality. Even though this inequality looks very simple, it is of great interest in operator theory. We refer the reader to \cite{kittanehmanasreh,kittanehmanasreh2,omarkittaneh,sabchoiref,sabLM, ALE,zuo} as a sample of the extensive use of this inequality in this field. Refining this inequality has taken the attention of many researchers in the field, where adding a positive term to the left side is possible.

Among the first refinements of this inequality is the squared version proved in \cite{omarkittaneh}
\begin{equation}\label{hirzkitt}
(a^{\nu}b^{1-\nu})^2+\min\{\nu,1-\nu\}^2(a-b)^2\leq (\nu a+(1-\nu)b)^2.
\end{equation}
Later, the authors in \cite{kittanehmanasreh} obtained the other interesting refinement
\begin{equation}\label{kittmans}
a^{\nu}b^{1-\nu}+\min\{\nu,1-\nu\}(\sqrt{a}-\sqrt{b})^2\leq\nu a+(1-\nu)b.
\end{equation}
A common fact about the above refinements is having one refining term.\\
In the recent paper \cite{zhao}, some reverses and refinements of Young's inequality were presented, with one or two refining terms. In particular, it is proved that
\begin{equation}
\left\{\begin{array}{cc}a^{\nu}b^{1-\nu}+\nu(\sqrt{a}-\sqrt{b})^2+r_0(\sqrt[4]{ab}-\sqrt{b})^2\leq \nu a+(1-\nu)b,&0<\nu\leq\frac{1}{2}\\
a^{\nu}b^{1-\nu}+(1-\nu)(\sqrt{a}-\sqrt{b})^2+r_0(\sqrt[4]{ab}-\sqrt{a})^2\leq \nu a+(1-\nu)b,&\frac{1}{2}<\nu\leq 1\end{array}\right.,
\end{equation}
where $r_0=\min\{2r,1-2r\}$ for $r=\min\{\nu,1-\nu\}.$ These inequalities refine (\ref{kittmans}) by adding a second refining term to the original Young's inequality. In the same paper, the following reversed versions have been proved too.
\begin{equation}\label{reverse_zhao}
\left\{\begin{array}{cc}\nu a+(1-\nu)b+r_0(\sqrt[4]{ab}-\sqrt{a})^2\leq a^{\nu}b^{1-\nu}+(1-\nu)(\sqrt{a}-\sqrt{b})^2,&0\leq\nu\leq\frac{1}{2}\\
\nu a+(1-\nu)b+r_0(\sqrt[4]{ab}-\sqrt{b})^2\leq a^{\nu}b^{1-\nu}+\nu(\sqrt{a}-\sqrt{b})^2,&0\leq\nu\leq\frac{1}{2}\end{array}\right.
\end{equation}
where $r_0=\min\{2r,1-2r\}$ for $r=\min\{\nu,1-\nu\}.$ These inequalities refine the reversed version  $\nu a+(1-\nu)b\leq a^{\nu}b^{1-\nu}+\max\{\nu,1-\nu\}(\sqrt{a}-\sqrt{b})^2$; cf. \cite{kittanehmanasreh2}.

Moreover, it is proved in \cite{zhao} that
\begin{equation}\label{reverse_zhao_square}
\left\{\begin{array}{cc}(\nu a+(1-\nu)b)^2+r_0(\sqrt{ab}-a)^2\leq \left(a^{\nu}b^{1-\nu}\right)^2+(1-\nu)^2(a-b)^2,&0\leq\nu\leq\frac{1}{2}\\
(\nu a+(1-\nu)b)^2+r_0(\sqrt{ab}-b)^2\leq \left(a^{\nu}b^{1-\nu}\right)^2+\nu^2(a-b)^2,&0\leq\nu\leq\frac{1}{2}\end{array}\right.
\end{equation}
refining the squared version $(\nu a+(1-\nu)b)^2\leq \left(a^{\nu}b^{1-\nu}\right)^2+\max\{\nu,1-\nu\}^2(a-b)^2$ of \cite{kittanehmanasreh2}.

Throughout the paper, $\mathbb{M}_n$ denotes the space of all $n\times n$ complex matrices, $\mathbb{M}_n^{+}$ is the cone of positive semidefinite matrices in $\mathbb{M}_n$, $\mathbb{M}_n^{++}$ is the cone of strictly positive definite matrices in $\mathbb{M}_n$ and $\||\;\;\||$ is a unitarily invariant norm defined on $\mathbb{M}_n$.

Recall that the notation $X\geq Y$ for $X,Y\in \mathbb{M}_n$ means that $X$ and $Y$ are hermitian and $X-Y\in\mathbb{M}_n^{+}.$ Moreover, for two  operators $A,B\in\mathbb{M}_n^{++}$ and $\nu\in\mathbb{R},$ we use the notations
\begin{eqnarray*}
A\nabla_{\nu}B=(1-\nu) A+\nu B\;{\text{and}}\; A\#_\nu B=A^{\frac{1}{2}}\left(A^{-\frac{1}{2}}BA^{-\frac{1}{2}}\right)^{\nu}A^{\frac{1}{2}}.
\end{eqnarray*}
The above refinements for real numbers are extended to matrices in different approaches.

Our main goal in this article is to give a full description of these refinements and reverses. In particular, we utilize our recent refinement in \cite{sabchoiref} to show other multiple-term refinements of the original Young's inequality and its reverses. Then we use these refinements to obtain improved Heinz inequalities for operators.

To state our results, we adopt the following notations. Let $a,b>0$ and $\nu\in [0,1]$. For $N\in\mathbb{N}$ and $j=1,2,\cdots,N$,  let $$k_j(\nu)=[2^{j-1}\nu], r_j(\nu)=[2^{j}\nu]\;{\text{and}}\;s_j(\nu)=(-1)^{r_j(\nu)}2^{j-1}\nu+(-1)^{r_j(\nu)+1}\left[\frac{r_j(\nu)+1}{2}\right].$$
Then define the positive function
\begin{equation}\label{SN_definition}
S_N(\nu;a,b)=\sum_{j=1}^{N}s_j(\nu)
\left(\sqrt[2^j]{b^{2^{j-1}-k_j(\nu)}a^{k_j(\nu)}}-\sqrt[2^{j}]{a^{k_j(\nu)+1}b^{2^{j-1}-k_j(\nu)-1}}\right)^2.
\end{equation}

In \cite{sabchoiref}, it is proved that
\begin{equation}\label{the_first_refinement}
a^{\nu}b^{1-\nu}+S_N(\nu;a,b)\leq \nu a+(1-\nu)b.
\end{equation}
and
\begin{equation}\label{exact_difference}
\nu a+(1-\nu)b-S_N(\nu;a,b)=R_N(\nu;a,b)
\end{equation}
where
\begin{eqnarray}
\nonumber&&R_N(\nu;a,b)\\
\nonumber&=&\left([2^{N}\nu]+1-2^{N}\nu\right)\sqrt[2^{N}]{a^{[2^N\nu]}b^{2^{N}-[2^{N}\nu]}}+\left(2^{N}\nu-[2^{N}\nu]\right)\sqrt[2^{N}]
{a^{[2^N\nu]+1}b^{2^{N}-[2^{N}\nu]-1}}.\\
\label{Rformula}&&
\end{eqnarray}
In the beginning, we prove the following property of $S_N$ and $R_N$. This property helps us to write the refining term in an appropriate way.
\begin{proposition}
For $a,b>0, 0\leq \nu\leq 1, N\in\mathbb{N}$ and $S_N$ and $R_N$ as above. We have
$S_N(\nu;a,b)=S_N(1-\nu;b,a)$ and $R_N(\nu;a,b)=R_N(1-\nu;b,a).$
\end{proposition}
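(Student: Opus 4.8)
The plan is to prove the identity for $R_N$ first, directly from its closed form \eqref{Rformula}, and then to read off the identity for $S_N$ from the exact-difference formula \eqref{exact_difference}. Proceeding in this order avoids having to analyse the nested floor functions hidden in $s_j$ and $k_j$ more than once.

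First I would handle $R_N$. Put $m=[2^N\nu]$ and split into two cases. If $2^N\nu\notin\mathbb{Z}$, then $[2^N(1-\nu)]=2^N-1-m$, and using $2^N(1-\nu)=2^N-2^N\nu$ one gets
\[
[2^N(1-\nu)]+1-2^N(1-\nu)=2^N\nu-m,\qquad 2^N(1-\nu)-[2^N(1-\nu)]=m+1-2^N\nu .
\]
Substituting these identities, together with $2^N-[2^N(1-\nu)]=m+1$ and $2^N-[2^N(1-\nu)]-1=m$, into the formula for $R_N(1-\nu;b,a)$, one checks that its first summand equals the second summand of $R_N(\nu;a,b)$ and its second summand equals the first summand of $R_N(\nu;a,b)$; hence the two agree. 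If instead $2^N\nu=m\in\mathbb{Z}$, then the second coefficient vanishes in each of $R_N(\nu;a,b)$ and $R_N(1-\nu;b,a)$, and both collapse to $\sqrt[2^N]{a^m b^{2^N-m}}=a^{\nu}b^{1-\nu}$. Either way, $R_N(\nu;a,b)=R_N(1-\nu;b,a)$.

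Next I would pass to $S_N$ with no extra computation. Applying \eqref{exact_difference} at the point $(1-\nu;b,a)$ yields $S_N(1-\nu;b,a)=\bigl((1-\nu)b+\nu a\bigr)-R_N(1-\nu;b,a)$, while \eqref{exact_difference} at $(\nu;a,b)$ yields $S_N(\nu;a,b)=\bigl(\nu a+(1-\nu)b\bigr)-R_N(\nu;a,b)$. Since $(1-\nu)b+\nu a=\nu a+(1-\nu)b$ and, by the previous paragraph, $R_N(1-\nu;b,a)=R_N(\nu;a,b)$, the two right-hand sides coincide, so $S_N(\nu;a,b)=S_N(1-\nu;b,a)$.

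The only genuine subtlety is the case split in the $R_N$ step forced by the floor functions; everything else is a substitution. One could instead try to prove the $S_N$ identity summand by summand, verifying that $k_j(1-\nu)=2^{j-1}-1-k_j(\nu)$, $r_j(1-\nu)=2^{j}-1-r_j(\nu)$ and $s_j(1-\nu)=s_j(\nu)$ whenever $2^{j-1}\nu\notin\mathbb{Z}$ (and that $s_j(\nu)=s_j(1-\nu)=0$ when $2^{j-1}\nu\in\mathbb{Z}$, so the $j$-th bracket simply drops out), which makes the two radicals inside the $j$-th square interchange. I expect the main obstacle along that route to be having to repeat the boundary bookkeeping for every index $j$, which is precisely what routing through $R_N$ and \eqref{exact_difference} sidesteps.
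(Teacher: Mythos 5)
Your proposal is correct and follows essentially the same route as the paper: both establish $R_N(\nu;a,b)=R_N(1-\nu;b,a)$ first by a case split on whether $2^N\nu$ is an integer (your identity $[2^N(1-\nu)]=2^N-1-[2^N\nu]$ is just the paper's $[-x]=-[x]-1$ step in different clothing), and then transfer the symmetry to $S_N$ via \eqref{exact_difference}. No gaps.
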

\begin{proof}
We show that $R_N(\nu;a,b)=R_N(1-\nu;b,a),$ which then leads to other conclusion via (\ref{exact_difference}). We obtain $R(1-\nu;b,a)$ by replacing $\nu, a$ and $b$ by $1-\nu,b$ and $a$, respectively, in (\ref{Rformula}). Therefore
\begin{eqnarray*}
R_N(1-\nu;b,a)&=&\left([2^{N}(1-\nu)]+1-2^{N}(1-\nu)\right)\sqrt[2^{N}]{b^{[2^{N}(1-\nu)]}a^{2^{N}-[2^{N}(1-\nu)]}}\\
&&+\left(2^{N}(1-\nu)-[2^{N}(1-\nu)]\right)\sqrt[2^{N}]{b^{[2^{N}(1-\nu)]+1}a^{2^{N}-[2^{N}(1-\nu)]-1}}\\
&=&(1+2^{N}\nu+[-2^{N}\nu])\sqrt[2^{N}]{b^{2^{N}+[-2^{N}\nu]}a^{-[-2^{N}\nu]}}+\\
&&+(-2^{N}\nu-[-2^{N}\nu])\sqrt[2^{N}]{b^{2^{N}+[-2^{N}]+1}a^{-[-2^{N}\nu]-1}}.
\end{eqnarray*}
To simplify this expression, we have two cases. The first is when $[2^{N}\nu]$ is an integer. Considering this in the above equation gives
 $R_N(1-\nu;b,a)=\sqrt[2^{N}]{a^{2^{N}\nu}b^{2^{N}-2^{N}\nu}}.$ Computing $R_N(\nu;a,b)$ when $[2^{N}\nu]$ is an integer gives $R_N(\nu;a,b)=\sqrt[2^{N}]{a^{2^{N}\nu}b^{2^{N}-2^{N}\nu}}.$ This completes the proof for this case. Now if $[2^{N}\nu]$ is not an integer, then $[-2^{N}\nu]=-[2^{N}\nu]-1.$ Substituting this back in $R_{N}(1-\nu;b,a)$ gives $R_N(\nu;a,b).$
\end{proof}
In establishing the reversed version, we need the inequality
\begin{equation}\label{heinz}
2\sqrt{ab}\leq a^{\nu}b^{1-\nu}+a^{1-\nu}b^{\nu}\leq a+b
\end{equation}
known as the Heinz mean inequality; see \cite{BM, KAU} and references therein for more information on Heinz inequality.\\
The organization of the paper is as follows. In the first part, we present refinements of the reversed Young's inequality, then a refinement of the squared version with its reverse is presented. After that, further results generalizing several inequalities in the literature are proved. Next, we present our refinements in terms of the Kantotovich constant, which has been used extensively in recent refinements. Interestingly, these refinements are then applied to log-convex functions and operators. In the last section we present several Young and Heinz type inequalities for the Hilbert-Schmidt norm, improving numerous results in the literature.\\
We emphasize that the significance of the results in this article is to have as many refining terms as we wish, unlike all other refinements in the literature where at most two refining terms have been found.

\section{Numerical versions}
In this part of the paper, we present the numerical inequalities needed to prove the operator versions.

\subsection{ Refined Reverses}
We prove first refined reverses of Young's inequality, generalizing (\ref{reverse_zhao}) and other inequalities in the literature. This is accomplished by adding $N$ refining terms, instead of one term as in (\ref{reverse_zhao}).
\begin{theorem}\label{our_first_reverse}
Let $a,b>0$ and $N\in\mathbb{N}.$ If $0\leq\nu\leq \frac{1}{2}$, then
$$\nu a+(1-\nu) b+S_N(2\nu;\sqrt{ab},a)\leq a^{\nu}b^{1-\nu}+(1-\nu)(\sqrt{a}-\sqrt{b})^2.$$
On the other hand, if $\frac{1}{2}\leq \nu\leq 1$, then
$$\nu a+(1-\nu) b+S_N(2-2\nu;\sqrt{ab},b)\leq a^{\nu}b^{1-\nu}+\nu(\sqrt{a}-\sqrt{b})^2.$$
\end{theorem}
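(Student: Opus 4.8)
The plan is to reduce the claimed inequality to the already-established refinement \eqref{the_first_refinement} by a suitable substitution, exploiting the fact that the Heinz-type quantity $a^\nu b^{1-\nu}$ is a geometric mean of $\sqrt{ab}$ and one of $a,b$. Concretely, for $0\le\nu\le\frac12$ write
\begin{equation*}
a^\nu b^{1-\nu}=\left(\sqrt{ab}\right)^{2\nu}a^{\,0}\,b^{1-2\nu}\cdot\text{(check exponents)}=b\left(\frac{\sqrt{ab}}{b}\right)^{2\nu}=b^{1-2\nu}(\sqrt{ab})^{2\nu},
\end{equation*}
i.e. $a^\nu b^{1-\nu}=(\sqrt{ab})^{2\nu}\,b^{1-2\nu}$. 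Thus with the new variables $x=\sqrt{ab}$, $y=b$ and the new parameter $\mu=2\nu\in[0,1]$, one has $x^\mu y^{1-\mu}=a^\nu b^{1-\nu}$. Applying \eqref{the_first_refinement} to $x,y,\mu,N$ gives
\begin{equation*}
a^\nu b^{1-\nu}+S_N(2\nu;\sqrt{ab},b)\le \mu x+(1-\mu)y=2\nu\sqrt{ab}+(1-2\nu)b.
\end{equation*}
Hmm — the stated theorem has $S_N(2\nu;\sqrt{ab},a)$ and the arithmetic side $\nu a+(1-\nu)b+(1-\nu)(\sqrt a-\sqrt b)^2$, so the correct reduction must instead use $a^\nu b^{1-\nu}=a^{\,0}(\sqrt{ab}\,)$-type bookkeeping the other way; I would recompute which of $a,b$ plays the role of the ``far'' endpoint so that the residual arithmetic term matches. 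The point of the substitution is fixed; only the labelling of the endpoints needs care.

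The key steps, in order, are: (1) verify the algebraic identity expressing $a^\nu b^{1-\nu}$ as $(\sqrt{ab})^{2\nu}$ times a power of $a$ (resp. $b$) for $\nu\le\frac12$ (resp. $\nu\ge\frac12$), so that \eqref{the_first_refinement} with the substituted triple reproduces the geometric-mean term on the right of the theorem; (2) apply \eqref{the_first_refinement} verbatim in the new variables to obtain $a^\nu b^{1-\nu}+S_N(2\nu;\sqrt{ab},a)\le$ (some convex combination of $\sqrt{ab}$ and $a$); (3) show that this convex combination of $\sqrt{ab}$ and $a$ equals $a^\nu b^{1-\nu}$'s companion, i.e. that
\begin{equation*}
(\text{convex combination of }\sqrt{ab}\text{ and }a)\;+\;(\nu a+(1-\nu)b)\;-\;(1-\nu)(\sqrt a-\sqrt b)^2
\end{equation*}
simplifies to the right-hand side of the theorem; this last simplification is a pure computation with $\sqrt{ab}=\sqrt a\sqrt b$ and should collapse because $(\sqrt a-\sqrt b)^2=a+b-2\sqrt{ab}$. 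Then the case $\frac12\le\nu\le1$ follows either by the same argument with $a\leftrightarrow b$, $\nu\leftrightarrow1-\nu$, or by invoking the symmetry Proposition proved above on $S_N$.

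The main obstacle I anticipate is purely bookkeeping: getting the exponents and the endpoint labels exactly right so that \eqref{the_first_refinement}, which is stated as $a^\nu b^{1-\nu}+S_N(\nu;a,b)\le \nu a+(1-\nu)b$ with a specific ordering convention inside $S_N$, produces the term $(1-\nu)(\sqrt a-\sqrt b)^2$ rather than $\nu(\sqrt a-\sqrt b)^2$ on the correct side. One must be careful that $S_N(\cdot;\cdot,\cdot)$ is \emph{not} symmetric in its last two arguments (only the combined symmetry $S_N(\nu;a,b)=S_N(1-\nu;b,a)$ holds), so the substitution $(a,b,\nu)\mapsto(\sqrt{ab},a,2\nu)$ and $(a,b,\nu)\mapsto(\sqrt{ab},b,2-2\nu)$ are genuinely the two distinct cases, matched to $\nu\le\frac12$ and $\nu\ge\frac12$ respectively. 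Once the substitution is pinned down, no inequality work remains — everything is a consequence of \eqref{the_first_refinement} applied once.
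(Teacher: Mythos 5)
There is a genuine gap, and it sits exactly at the point you flagged but then waved away. The two candidate substitutions into (\ref{the_first_refinement}) are not interchangeable pieces of bookkeeping: the one that reproduces the geometric mean $a^{\nu}b^{1-\nu}$ exactly, namely $(x,y,\mu)=(\sqrt{ab},\,b,\,2\nu)$ (your identity $a^{\nu}b^{1-\nu}=(\sqrt{ab})^{2\nu}b^{1-2\nu}$ is correct), produces the refining term $S_N(2\nu;\sqrt{ab},b)$ and the convex combination $2\nu\sqrt{ab}+(1-2\nu)b$, neither of which matches the statement for $0\le\nu\le\tfrac12$; while the substitution that matches the statement, $(x,y,\mu)=(\sqrt{ab},\,a,\,2\nu)$, yields
\begin{equation*}
(\sqrt{ab})^{2\nu}a^{1-2\nu}+S_N(2\nu;\sqrt{ab},a)\;\le\;2\nu\sqrt{ab}+(1-2\nu)a,
\end{equation*}
and here $(\sqrt{ab})^{2\nu}a^{1-2\nu}=a^{1-\nu}b^{\nu}$, \emph{not} $a^{\nu}b^{1-\nu}$. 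For $\nu<\tfrac12$ these two quantities are genuinely different and neither dominates the other uniformly in $a,b$, so you cannot simply relabel endpoints until the geometric mean comes out right. Your concluding claim that ``once the substitution is pinned down, no inequality work remains'' is therefore false.

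The missing ingredient is the Heinz mean inequality (\ref{heinz}): after expanding $(1-\nu)(\sqrt a-\sqrt b)^2=(1-\nu)(a+b-2\sqrt{ab})$, the difference between the right and left sides of the theorem equals $a^{\nu}b^{1-\nu}+(1-2\nu)a+2\nu\sqrt{ab}-2\sqrt{ab}$; applying (\ref{the_first_refinement}) with $(\sqrt{ab},a,2\nu)$ to the middle two terms leaves $\bigl(a^{\nu}b^{1-\nu}+a^{1-\nu}b^{\nu}-2\sqrt{ab}\bigr)+S_N(2\nu;\sqrt{ab},a)$, and one must then invoke $2\sqrt{ab}\le a^{\nu}b^{1-\nu}+a^{1-\nu}b^{\nu}$ to discard the parenthesized block. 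That extra (admittedly elementary, AM--GM-type) step is precisely what your proposal omits, and without it the argument does not close. The case $\tfrac12\le\nu\le1$ then follows symmetrically, as you indicate.
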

\begin{proof}
If $0\leq \nu\leq\frac{1}{2}$, then
\begin{eqnarray*}
&&\hspace{-2cm}a^{\nu}b^{1-\nu}+(1-\nu)(\sqrt{a}-\sqrt{b})^2-\left(\nu a+(1-\nu) b\right)\\
&=&a^{\nu}b^{1-\nu}+(1-2\nu)a+2\nu \sqrt{ab}-2\sqrt{ab}\\
&\geq&a^{\nu}b^{1-\nu}+a^{1-2\nu}(\sqrt{ab})^{2\nu}+S_N(2\nu;\sqrt{ab},a)-2\sqrt{ab}\;\;\;(\text{by}\;\ref{the_first_refinement})\\
&=&\left(a^{\nu}b^{1-\nu}+a^{1-\nu}b^{\nu}-2\sqrt{ab}\right)+S_N(2\nu;\sqrt{ab},a)\\
&\geq&S_N(2\nu;\sqrt{ab},a),
\end{eqnarray*}
where we have used (\ref{heinz}) to get the last inequality. Thus, we have proved
$$\nu a+(1-\nu) b+S_N(2\nu;\sqrt{ab},a)\leq a^{\nu}b^{1-\nu}+(1-\nu)(\sqrt{a}-\sqrt{b})^2,\;0\leq \nu\leq \frac{1}{2}.$$
Similar computations give
$$\nu a+(1-\nu) b+S_N(2-2\nu;\sqrt{ab},b)\leq a^{\nu}b^{1-\nu}+\nu(\sqrt{a}-\sqrt{b})^2,\;\frac{1}{2}\leq \nu\leq 1.$$
\end{proof}
To make this inequality better understood in contrast to (\ref{reverse_zhao}), we find the $S_1$ term appearing in Theorem \ref{our_first_reverse}.\\
For $0\leq \nu<\frac{1}{2}$, direct computations show that $k_1(2\nu)=0$ and
{\small $$r_1(2\nu)=\left\{\begin{array}{cc}2\nu,&0\leq \nu<\frac{1}{4}\\ 1-2\nu,&\frac{1}{4}\leq \nu<\frac{1}{2}\end{array}\right., S_1(2\nu;\sqrt{ab},a)=\left\{\begin{array}{cc}2\nu(\sqrt[4]{ab}-\sqrt{a})^2,&0\leq\nu<\frac{1}{4}\\
(1-2\nu)(\sqrt[4]{ab}-\sqrt{a})^2,&\frac{1}{4}\leq\nu<\frac{1}{2}\end{array}\right..$$} Thus, for $0\leq\nu<\frac{1}{2}$ we have
$S_1(2\nu;\sqrt{ab},a)=\min\{2\nu,1-2\nu\}(\sqrt[4]{ab}-\sqrt{a})^2,$ which is exactly the refining term appearing in (\ref{reverse_zhao}). Similar computations show that $S_1(2-2\nu;\sqrt{ab},b)$ is the refining term in (\ref{reverse_zhao}) for $\frac{1}{2}<\nu<1.$\\
This justifies why our refinement in Theorem \ref{our_first_reverse} is much better than (\ref{reverse_zhao}).

For $\nu>0$ we have the reversed version of Young's inequality
\begin{equation}\label{minusreverse}
(1+\nu)a-\nu b\leq a^{1+\nu}b^{-\nu}.
\end{equation}
This inequality becomes handy in proving reversed versions of Young's inequality. The following refined version has been recently proved in \cite{mojtaba}

\begin{equation}\label{refinedminus}
(1+\nu)a-\nu b+\nu(\sqrt{a}-\sqrt{b})^2\leq a^{1+\nu}b^{-\nu}, a,b,\nu>0.
\end{equation}

The following is a refinement of this inequality.
\begin{theorem}\label{theorem_reverse_mojtaba}
Let $a,b>0$ and $\nu\geq 0$. Then
\begin{equation}
(1+\nu)a-\nu b+\nu\sum_{j=1}^{N}2^{j-1}\left(\sqrt{a}-\sqrt[2^j]{a^{2^{j-1}-1}b}\right)^2\leq a^{1+\nu}b^{-\nu}
\end{equation}
for each $N\in\mathbb{N}$.
\end{theorem}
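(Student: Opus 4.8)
The plan is to evaluate the refining sum in closed form and thereby reduce the statement to the one-term reverse Young inequality (\ref{minusreverse}).

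First I would abbreviate $c_j := \sqrt[2^j]{a^{2^{j-1}-1}b} = a^{\frac12 - 2^{-j}}\,b^{2^{-j}}$ for $j = 1,\dots,N$, and extend this to $j=0$ by setting $c_0 := a^{-1/2}b$. A one-line computation with exponents gives the recurrence
\[
c_j^{\,2} = \sqrt{a}\,c_{j-1}\qquad (j\ge 1),
\]
together with the boundary values $c_1 = \sqrt{b}$, $\sqrt{a}\,c_0 = b$ and $\sqrt{a}\,c_N = a^{1-2^{-N}}b^{2^{-N}}$. Expanding each square and using the recurrence,
\[
2^{j-1}\left(\sqrt{a}-c_j\right)^2 = 2^{j-1}a - 2^{j}\sqrt{a}\,c_j + 2^{j-1}\sqrt{a}\,c_{j-1},
\]
so that, when summed over $j = 1,\dots,N$, the last two terms telescope and only $\sqrt{a}\,(c_0 - 2^N c_N)$ survives. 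Combining this with $\sum_{j=1}^{N}2^{j-1} = 2^N - 1$ and the boundary values yields the identity
\[
\sum_{j=1}^{N}2^{j-1}\left(\sqrt{a}-\sqrt[2^j]{a^{2^{j-1}-1}b}\right)^2 = (2^N-1)a + b - 2^N a^{1-2^{-N}}b^{2^{-N}}.
\]

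Plugging this identity into the left-hand side of the asserted inequality, the $\nu b$ terms cancel and the $a$-terms collect, so the inequality is equivalent to
\[
(1+2^N\nu)\,a - 2^N\nu\,a^{1-2^{-N}}b^{2^{-N}} \le a^{1+\nu}b^{-\nu}.
\]
This is exactly (\ref{minusreverse}) applied to the pair $\bigl(a,\ a^{1-2^{-N}}b^{2^{-N}}\bigr)$ with the exponent $2^N\nu$ in the role of $\nu$, since
\[
a^{\,1+2^N\nu}\bigl(a^{1-2^{-N}}b^{2^{-N}}\bigr)^{-2^N\nu} = a^{\,1+2^N\nu - 2^N\nu + \nu}\,b^{-\nu} = a^{1+\nu}b^{-\nu};
\]
the case $\nu = 0$ is trivial. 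This finishes the proof.

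The only thing that really needs care is the telescoping identity, in particular getting the $j=0$ boundary term and all the half-integer exponents right; after that the reduction to (\ref{minusreverse}) is immediate. As a sanity check, for $N=1$ the identity collapses to $a + b - 2\sqrt{ab} = (\sqrt{a}-\sqrt{b})^2$ and one recovers the known refinement (\ref{refinedminus}).
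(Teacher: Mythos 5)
Your proof is correct and follows essentially the same route as the paper: you establish the very same closed-form identity $(1+\nu)a-\nu b+\nu\sum_{j=1}^{N}2^{j-1}\bigl(\sqrt{a}-\sqrt[2^j]{a^{2^{j-1}-1}b}\bigr)^2=(1+2^{N}\nu)a-2^{N}\nu\,\sqrt[2^{N}]{a^{2^{N}-1}b}$ (the paper's (\ref{wanted_induction_reverse})) and then conclude by applying (\ref{minusreverse}) to the pair $\bigl(a,\,a^{1-2^{-N}}b^{2^{-N}}\bigr)$ with parameter $2^{N}\nu$, exactly as the paper does. The only difference is cosmetic: you derive the identity by a direct telescoping computation with $c_j=a^{1/2-2^{-j}}b^{2^{-j}}$, whereas the paper proves it by induction on $N$.
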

\begin{proof}
We prove first, by induction, that
\begin{equation}\label{wanted_induction_reverse}
(1+\nu)a-\nu b+\nu\sum_{j=1}^{N}2^{j-1}\left(\sqrt{a}-\sqrt[2^j]{a^{2^{j-1}-1}b}\right)^2=(1+2^{N}\nu)a-2^{N}\nu \sqrt[2^{N}]{a^{2^{N}-1}b}.
\end{equation}
When $N=1,$ this is (\ref{refinedminus}). Assume (\ref{wanted_induction_reverse}) is true for some $N\in\mathbb{N}$. Then,
\begin{eqnarray*}
(1+\nu)a&-&\nu b+\nu\sum_{j=1}^{N+1}2^{j-1}\left(\sqrt{a}-\sqrt[2^j]{a^{2^{j-1}-1}b}\right)^2\\
&=&(1+\nu)a-\nu b+\nu\sum_{j=1}^{N}2^{j-1}\left(\sqrt{a}-\sqrt[2^j]{a^{2^{j-1}-1}b}\right)^2+\\
&&+2^{N}\nu\left(\sqrt{a}-\sqrt[2^{N+1}]{a^{2^{N}-1}b}\right)^2\;({\text{apply\;the\;inductive\;step\;now}})\\
&=&(1+2^{N}\nu)a-2^{N}\nu \sqrt[2^{N}]{a^{2^{N}-1}b}+2^{N}\nu\left(a+\sqrt[2^N]{a^{2^N-1}b}-2\sqrt[2^{N+1}]{a^{2^{N}-1}b}\right)\\
&=&(1+2^{N+1}\nu)a-2^{N}\nu \sqrt[2^{N+1}]{a^{2^{N+1}-1}b}.
\end{eqnarray*}
This proves (\ref{wanted_induction_reverse}). Now apply (\ref{minusreverse}) on (\ref{wanted_induction_reverse}) to get the result.
\end{proof}

Alternatively, we may apply Theorem \ref{our_first_reverse} to obtain other refinements of (\ref{minusreverse}).
\begin{corollary}\label{reverse_after_mojtaba}
Let $a,b,>0$ and $N\in\mathbb{N}$. Then
\begin{eqnarray}\label{completerefinedminusfirst}
(1+\nu)a-\nu b+\frac{1}{b}S_N(1-\nu;ab,b^2)\leq a^{\nu+1}b^{-\nu}
\end{eqnarray}
for $0\leq \nu\leq 1$.
\end{corollary}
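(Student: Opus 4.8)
The plan is to obtain Corollary \ref{reverse_after_mojtaba} directly from the second inequality of Theorem \ref{our_first_reverse} by a scaling substitution, with no extra machinery. Concretely, I would apply that inequality with $a$ replaced by $a^{2}$, with $b$ replaced by $b^{2}$, and with the parameter $\nu$ replaced by $\mu:=\frac{1+\nu}{2}$. Since $0\le\nu\le1$ forces $\mu\in[\tfrac12,1]$, this is a legitimate instance of the ``$\frac12\le\nu\le1$'' case. Using $\sqrt{a^{2}b^{2}}=ab$, $\,2-2\mu=1-\nu$, and $(a^{2})^{\mu}(b^{2})^{1-\mu}=a^{2\mu}b^{2-2\mu}=a^{1+\nu}b^{1-\nu}$, the substituted inequality becomes
$$\frac{1+\nu}{2}a^{2}+\frac{1-\nu}{2}b^{2}+S_{N}(1-\nu;ab,b^{2})\le a^{1+\nu}b^{1-\nu}+\frac{1+\nu}{2}(a-b)^{2}.$$

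The next step is routine algebra: expand $(a-b)^{2}$ on the right, cancel the common $\frac{1+\nu}{2}a^{2}$, and collect the $b^{2}$ terms, whose net coefficient is $\frac{(1-\nu)-(1+\nu)}{2}=-\nu$. This leaves
$$(1+\nu)ab-\nu b^{2}+S_{N}(1-\nu;ab,b^{2})\le a^{1+\nu}b^{1-\nu},$$
and dividing through by $b>0$ is exactly \eqref{completerefinedminusfirst}. So the proof is essentially a one-line substitution followed by simplification; there is no real obstacle. The only points requiring a little care are verifying that the new parameter $\mu=\frac{1+\nu}{2}$ stays in the admissible range $[\tfrac12,1]$, and keeping track of the exponents so that the geometric-mean term on the right collapses cleanly to $a^{1+\nu}b^{1-\nu}$.

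As a consistency check to include afterwards, I would note that $S_{N}(\lambda;\cdot,\cdot)$ is homogeneous of degree one in its last two arguments (each summand is the square of a $2^{j}$-th root of a monomial of degree $2^{j-1}$), so $\frac1b S_{N}(1-\nu;ab,b^{2})=S_{N}(1-\nu;a,b)$ and \eqref{completerefinedminusfirst} may equivalently be stated as $(1+\nu)a-\nu b+S_{N}(1-\nu;a,b)\le a^{\nu+1}b^{-\nu}$. For $N=1$ this reduces to $(1+\nu)a-\nu b+\min\{\nu,1-\nu\}(\sqrt a-\sqrt b)^{2}\le a^{\nu+1}b^{-\nu}$, which is consistent with \eqref{refinedminus} (and strengthens it with $N$ extra terms), a useful sanity check on the sign of the refining term.
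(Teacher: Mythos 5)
Your proof is correct and is essentially the paper's own argument: the authors likewise invoke the second case of Theorem \ref{our_first_reverse} with the parameter $t=\frac{1+\nu}{2}\in[\tfrac12,1]$ and the substitution $a\mapsto a^{2}$, $b\mapsto b^{2}$ (they merely perform the substitution after simplifying rather than before), and the algebra you carry out matches theirs. The only quibble is with your closing aside: since $S_{1}(1-\nu;a,b)=\min\{\nu,1-\nu\}(\sqrt a-\sqrt b)^{2}$, the $N=1$ case is \emph{weaker} than \eqref{refinedminus} on $\tfrac12<\nu\le1$ (where $\min\{\nu,1-\nu\}=1-\nu<\nu$), so the corollary does not uniformly strengthen \eqref{refinedminus} on $[0,1]$ --- but this does not affect the validity of the proof itself.
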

\begin{proof}
For $0\leq \nu\leq 1,$ let $t=\frac{\nu+1}{2}$. Then, $\frac{1}{2}\leq t\leq 1$ and Theorem \ref{our_first_reverse} ensure that
$$t a+(1-t) b+S_N(2-2t;\sqrt{ab},b)\leq a^{t}b^{1-t}+t(\sqrt{a}-\sqrt{b})^2$$
or
$$t a+(1-t) b-t(\sqrt{a}-\sqrt{b})^2+S_N(2-2t;\sqrt{ab},b)\leq a^{t}b^{1-t}.$$ Simplifying this expression and replacing $t$ by $\frac{\nu+1}{2}$ we reach
$$(1+\nu)\sqrt{ab}-\nu b+S_N(1-\nu;\sqrt{ab},b)\leq a^{\frac{\nu+1}{2}}b^{\frac{1-\nu}{2}}.$$ Replacing $a$ by $a^2$, $b$ by $b^2$ and dividing by $b$ imply the required inequality.
\end{proof}

Further, we have the refinement.
\begin{corollary}
Let $a,b>0,\nu>0$ and $N\in\mathbb{N}$. Then
\begin{eqnarray*}
(1+\nu)a-\nu b+(1+\nu)S_N\left(\frac{1}{1+\nu};a^{1+\nu}b^{-\nu},b\right)\leq a^{1+\nu}b^{-\nu}.
\end{eqnarray*}
\end{corollary}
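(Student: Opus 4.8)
The statement to be proved is
\begin{equation*}
(1+\nu)a-\nu b+(1+\nu)S_N\left(\frac{1}{1+\nu};a^{1+\nu}b^{-\nu},b\right)\leq a^{1+\nu}b^{-\nu},\qquad a,b>0,\ \nu>0,
\end{equation*}
and the natural route is to reduce it to the basic refinement (\ref{the_first_refinement}) by an appropriate change of variables. The key observation is that the left-hand side of (\ref{minusreverse}), namely $(1+\nu)a-\nu b$, should be recognized as a convex combination in disguise: writing $c=a^{1+\nu}b^{-\nu}$ we have $a=c^{\frac{1}{1+\nu}}b^{\frac{\nu}{1+\nu}}$, i.e.\ $a$ is the weighted geometric mean $c\#_{\frac{\nu}{1+\nu}}b$ of the numbers $c$ and $b$. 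So the plan is: first apply (\ref{the_first_refinement}) with the weight $\mu=\frac{1}{1+\nu}\in(0,1)$ and with the two variables $c$ and $b$ (in that order), then algebraically rearrange.

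Concretely, the first step is to invoke (\ref{the_first_refinement}) in the form
\begin{equation*}
c^{\mu}b^{1-\mu}+S_N(\mu;c,b)\leq \mu c+(1-\mu)b
\end{equation*}
with $\mu=\frac{1}{1+\nu}$, so that $c^{\mu}b^{1-\mu}=\bigl(a^{1+\nu}b^{-\nu}\bigr)^{\frac{1}{1+\nu}}b^{\frac{\nu}{1+\nu}}=a$, and $\mu c+(1-\mu)b=\frac{1}{1+\nu}a^{1+\nu}b^{-\nu}+\frac{\nu}{1+\nu}b$. The second step is to rearrange this inequality to isolate $a^{1+\nu}b^{-\nu}$: multiplying through by $1+\nu$ gives
\begin{equation*}
a^{1+\nu}b^{-\nu}\geq (1+\nu)a+(1+\nu)S_N\!\left(\tfrac{1}{1+\nu};c,b\right)-\nu b,
\end{equation*}
which upon substituting back $c=a^{1+\nu}b^{-\nu}$ is precisely the claimed inequality. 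The positivity of $S_N$ (asserted right after its definition in (\ref{SN_definition})) guarantees that the added term has the correct sign, so nothing is lost.

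The main thing to be careful about — the only real ``obstacle'' — is verifying that the hypotheses of (\ref{the_first_refinement}) are genuinely met, i.e.\ that $\mu=\frac{1}{1+\nu}$ lies in $[0,1]$ for every $\nu>0$; this is immediate since $1+\nu>1$. One should also note that the argument $c=a^{1+\nu}b^{-\nu}$ fed into $S_N$ is strictly positive whenever $a,b>0$, so $S_N\bigl(\frac{1}{1+\nu};a^{1+\nu}b^{-\nu},b\bigr)$ is well defined. No induction or new estimate is needed here: unlike Theorem~\ref{theorem_reverse_mojtaba}, this corollary is a pure substitution from the already-established multi-term refinement (\ref{the_first_refinement}), and the proof is essentially one line once the substitution $c=a^{1+\nu}b^{-\nu}$, $\mu=\frac{1}{1+\nu}$ is written down.
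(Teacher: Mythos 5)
Your proof is correct and is essentially identical to the paper's own argument: the paper likewise sets $x=a^{1+\nu}b^{-\nu}$, $y=b$, $t=\frac{1}{1+\nu}$, applies (\ref{the_first_refinement}) to get $a+S_N\left(\frac{1}{1+\nu};a^{1+\nu}b^{-\nu},b\right)\leq \frac{1}{1+\nu}a^{1+\nu}b^{-\nu}+\frac{\nu}{1+\nu}b$, and multiplies through by $1+\nu$. The substitution, the verification that $c^{\mu}b^{1-\mu}=a$, and the final rearrangement all match.
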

\begin{proof}
For such $a,b$ and $\nu$, let $x=a^{1+\nu}b^{-\nu}, y=b$ and $t=\frac{1}{\nu+1}.$ By (\ref{the_first_refinement}), we have $x^ty^{1-t}+S_N(t;x,y)\leq tx+(1-t)y.$ That is
$$a+S_N\left(\frac{1}{1+\nu};a^{1+\nu}b^{-\nu},b\right)\leq \frac{1}{\nu+1}a^{1+\nu}b^{-\nu}+\frac{\nu}{1+\nu}b,$$ which is equivalent to
$$(1+\nu)a-\nu b+(1+\nu)S_N\left(\frac{1}{1+\nu};a^{1+\nu}b^{-\nu},b\right)\leq a^{1+\nu}b^{-\nu}.$$
\end{proof}

\subsection{The Squared Version}
In this part of the paper, we prove a refinement of the squared versions in \cite{omarkittaneh} and \cite{zhao}.
\begin{theorem}\label{refined_square}
Let $a,b>0$ and $\nu\in [0,1].$ Given $N\in \mathbb{N}, N\geq 2$, let $r_j(\nu), k_j(\nu)$ and $s_j(\nu)$ be as above. Then it holds that
\begin{eqnarray}
\nonumber&&\hspace{-2cm}\left(a^{\nu}b^{1-\nu}\right)^2+s_1^2(\nu)(a-b)^2\\
\nonumber&&+\sum_{j=2}^{N}s_j(\nu)
\left(\sqrt[2^{j-1}]{b^{2^{j-1}-k_j(\nu)}a^{k_j(\nu)}}-\sqrt[2^{j-1}]{a^{k_j(\nu)+1}b^{2^{j-1}-k_j(\nu)-1}}\right)^2\\
&\leq& \label{squared_inequality_first}\left(\nu a+(1-\nu)b\right)^2.
\end{eqnarray}
\end{theorem}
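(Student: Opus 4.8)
The plan is to reduce the statement to the already-established multi-term refinement (\ref{the_first_refinement}) by substituting squares and then closing the small remaining gap with an elementary identity. First I would apply (\ref{the_first_refinement}) with $a$ replaced by $a^{2}$ and $b$ by $b^{2}$, which yields
\[
\left(a^{\nu}b^{1-\nu}\right)^{2}+S_N(\nu;a^{2},b^{2})\leq \nu a^{2}+(1-\nu)b^{2}.
\]
The next step is to simplify $S_N(\nu;a^{2},b^{2})$. For each $j$ one has $\sqrt[2^{j}]{(b^{2})^{2^{j-1}-k_j(\nu)}(a^{2})^{k_j(\nu)}}=\sqrt[2^{j-1}]{b^{2^{j-1}-k_j(\nu)}a^{k_j(\nu)}}$, and likewise for the second radical, so for $j\geq 2$ the $j$-th summand is exactly the $j$-th term appearing in (\ref{squared_inequality_first}); for $j=1$, since $k_1(\nu)=[\nu]=0$ when $0\leq\nu<1$, the first summand collapses to $s_1(\nu)(a-b)^{2}$ (and the case $\nu=1$ is trivial, since $s_j(1)=0$ for every $j$ and both sides of (\ref{squared_inequality_first}) equal $a^{2}$). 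Hence, for $0\leq\nu<1$,
\[
\left(a^{\nu}b^{1-\nu}\right)^{2}+s_1(\nu)(a-b)^{2}+\sum_{j=2}^{N}s_j(\nu)\left(\sqrt[2^{j-1}]{b^{2^{j-1}-k_j(\nu)}a^{k_j(\nu)}}-\sqrt[2^{j-1}]{a^{k_j(\nu)+1}b^{2^{j-1}-k_j(\nu)-1}}\right)^{2}\leq \nu a^{2}+(1-\nu)b^{2}.
\]

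To finish, I would invoke the identity $\nu a^{2}+(1-\nu)b^{2}=(\nu a+(1-\nu)b)^{2}+\nu(1-\nu)(a-b)^{2}$ together with the fact that $s_1(\nu)=\min\{\nu,1-\nu\}$, which is read off directly from the definition in the two sub-cases $r_1(\nu)=0$ and $r_1(\nu)=1$. An immediate computation then shows $s_1^{2}(\nu)+\nu(1-\nu)=s_1(\nu)$ in both sub-cases, so that $s_1(\nu)(a-b)^{2}=s_1^{2}(\nu)(a-b)^{2}+\nu(1-\nu)(a-b)^{2}$; substituting this into the last displayed inequality and cancelling the term $\nu(1-\nu)(a-b)^{2}$ against the one produced by the identity yields exactly (\ref{squared_inequality_first}).

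The only point requiring care is the bookkeeping in simplifying $S_N(\nu;a^{2},b^{2})$: one must notice that squaring the arguments lowers every root index from $2^{j}$ to $2^{j-1}$, and that the $j=1$ summand thereby carries the coefficient $s_1(\nu)$ to the first power --- which is precisely what is needed to absorb the convexity defect $\nu(1-\nu)(a-b)^{2}$ and convert it into $s_1^{2}(\nu)(a-b)^{2}$. Everything else is a direct substitution together with the elementary expansion of $(\nu a+(1-\nu)b)^{2}$.
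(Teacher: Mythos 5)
Your proof is correct and follows essentially the same route as the paper's: both substitute $a\mapsto a^{2}$, $b\mapsto b^{2}$ into (\ref{the_first_refinement}) and then trade $s_1(\nu)(a-b)^{2}$ for $s_1^{2}(\nu)(a-b)^{2}$ via the identity $\nu a^{2}+(1-\nu)b^{2}-(\nu a+(1-\nu)b)^{2}=\nu(1-\nu)(a-b)^{2}=\bigl(s_1(\nu)-s_1^{2}(\nu)\bigr)(a-b)^{2}$, which the paper packages as a single ``add and subtract'' step. Your version is, if anything, slightly more careful about the bookkeeping (the drop of the root index from $2^{j}$ to $2^{j-1}$ and the $\nu=1$ edge case), but it is the same argument.
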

\begin{proof}
For such $a,b$ and $\nu$ it follows from (\ref{the_first_refinement}) that
\begin{eqnarray}
\nonumber&&\hspace{-2cm}a^{\nu}b^{1-\nu}+s_1(\nu)(\sqrt{a}-\sqrt{b})^2\\
\nonumber&&+\sum_{j=2}^{N}s_j(\nu)
\left(\sqrt[2^j]{b^{2^{j-1}-k_j(\nu)}a^{k_j(\nu)}}-\sqrt[2^{j}]{a^{k_j(\nu)+1}b^{2^{j-1}-k_j(\nu)-1}}\right)^2\\
\label{needed_first}&\leq& \nu a+(1-\nu)b.
\end{eqnarray}
In (\ref{needed_first}), replace $a$ by $a^2$ and $b$ by $b^2$ then add $s_1^2(\nu)(a-b)^2$ to both sides and add and subtract $(v a+(1-v)b)^2$ to the right side to get
\begin{eqnarray*}
\nonumber&&\hspace{-2cm}\left(a^{\nu}b^{1-\nu}\right)^2+s_1^2(\nu)(a-b)^2\\
\nonumber&&+\sum_{j=2}^{N}s_j(\nu)
\left(\sqrt[2^{j-1}]{b^{2^{j-1}-k_j(\nu)}a^{k_j(\nu)}}-\sqrt[2^{j-1}]{a^{k_j(\nu)+1}b^{2^{j-1}-k_j(\nu)-1}}\right)^2\\
&\leq& \left(\nu a+(1-\nu)b\right)^2\\
&&+(v a^2+(1-v)b^2)+s_1^2(a-b)^2-s_1(a-b)^2-(v a+(1-v)b)^2.
\end{eqnarray*}
Then the result follows by noting that $$(v a^2+(1-v)b^2)+s_1^2(\nu)(a-b)^2-s_1(\nu)(a-b)^2-(v a+(1-v)b)^2=0.$$
\end{proof}

Now replacing $a$ by $a^2$ and $b$ by $b^2$ in Theorem \ref{our_first_reverse}, and noting that $(\nu a+(1-\nu)b)^2- (\nu a^2+(1-\nu)b^2)+\max\{\nu,1-\nu\} (a - b)^2 - \max\{\nu,1-\nu\}^2 (a - b)^2=0,$ we obtain the following reverse of Theorem \ref{refined_square}, which is a refinement of (\ref{reverse_zhao_square}).
\begin{proposition}\label{reversed_square_reversed}
Let $a,b>0$ and $N\in\mathbb{N}$. If $0\leq\nu\leq \frac{1}{2}$, then
$$(\nu a+(1-\nu) b)^2+S_N(2\nu;ab,a^2)\leq \left(a^{\nu}b^{1-\nu}\right)^2+(1-\nu)^2(a-b)^2.$$
On the other hand, if $\frac{1}{2}\leq \nu\leq 1$, then
$$(\nu a+(1-\nu) b)^2+S_N(2-2\nu;ab,b^2)\leq \left(a^{\nu}b^{1-\nu}\right)^2+\nu^2(a-b)^2.$$
\end{proposition}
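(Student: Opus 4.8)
The plan is to derive Proposition~\ref{reversed_square_reversed} directly from Theorem~\ref{our_first_reverse} by the same substitution-and-algebra trick used in the proof of Theorem~\ref{refined_square}. First I would take the case $0\leq\nu\leq\frac12$ of Theorem~\ref{our_first_reverse}, namely
\[
\nu a+(1-\nu) b+S_N(2\nu;\sqrt{ab},a)\leq a^{\nu}b^{1-\nu}+(1-\nu)(\sqrt{a}-\sqrt{b})^2,
\]
and replace $a$ by $a^2$ and $b$ by $b^2$ throughout. On the right side $a^{\nu}b^{1-\nu}$ becomes $(a^{\nu}b^{1-\nu})^2$ and $(1-\nu)(\sqrt a-\sqrt b)^2$ becomes $(1-\nu)(a-b)^2$; on the left side $\sqrt{ab}\mapsto ab$ and $a\mapsto a^2$, so the refining term becomes $S_N(2\nu;ab,a^2)$. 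This gives
\[
\nu a^2+(1-\nu)b^2+S_N(2\nu;ab,a^2)\leq (a^{\nu}b^{1-\nu})^2+(1-\nu)(a-b)^2.
\]

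Next I would massage the inequality so that the left-hand constant term $\nu a^2+(1-\nu)b^2$ is replaced by $(\nu a+(1-\nu)b)^2$ and, correspondingly, $(1-\nu)(a-b)^2$ on the right is replaced by $(1-\nu)^2(a-b)^2$. The point is the pointwise identity
\[
\bigl(\nu a+(1-\nu)b\bigr)^2-\bigl(\nu a^2+(1-\nu)b^2\bigr)+(1-\nu)(a-b)^2-(1-\nu)^2(a-b)^2=0,
\]
which is exactly the $\max\{\nu,1-\nu\}=1-\nu$ instance of the remark preceding the statement; it follows from $\nu-\nu^2=\nu(1-\nu)$ and $(\nu a+(1-\nu)b)^2=\nu a^2+(1-\nu)b^2-\nu(1-\nu)(a-b)^2$. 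Adding $(\nu a+(1-\nu)b)^2-(\nu a^2+(1-\nu)b^2)$ to both sides of the displayed inequality and using this identity to rewrite the right-hand side yields precisely
\[
(\nu a+(1-\nu)b)^2+S_N(2\nu;ab,a^2)\leq (a^{\nu}b^{1-\nu})^2+(1-\nu)^2(a-b)^2,
\]
which is the first assertion. The case $\frac12\leq\nu\leq 1$ is handled identically, starting from the second inequality of Theorem~\ref{our_first_reverse} and using the identity with $\max\{\nu,1-\nu\}=\nu$, so that $S_N(2-2\nu;\sqrt{ab},b)\mapsto S_N(2-2\nu;ab,b^2)$ and $\nu(\sqrt a-\sqrt b)^2\mapsto\nu^2(a-b)^2$.

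There is essentially no hard step here; the only thing to be careful about is the bookkeeping of how the arguments of $S_N$ transform under $a\mapsto a^2,\ b\mapsto b^2$ — in particular that $S_N$ is \emph{not} homogeneous of a single degree, so one cannot simply pull a scalar out, and one must track that the first slot $\sqrt{ab}$ genuinely becomes $ab$ while $a$ becomes $a^2$. Once that substitution is written out correctly, the remaining manipulation is the one-line algebraic identity already recorded in the text, and the refinement of \eqref{reverse_zhao_square} follows because taking $N=1$ recovers the single-term version exactly as in the $S_1$ computation following Theorem~\ref{our_first_reverse}.
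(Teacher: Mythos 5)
Your proposal is correct and follows exactly the paper's own route: substitute $a\mapsto a^2$, $b\mapsto b^2$ in Theorem \ref{our_first_reverse} and then invoke the identity $(\nu a+(1-\nu)b)^2-(\nu a^2+(1-\nu)b^2)+\max\{\nu,1-\nu\}(a-b)^2-\max\{\nu,1-\nu\}^2(a-b)^2=0$, which is precisely the remark the paper states before the proposition. The bookkeeping of the arguments of $S_N$ under the substitution is also handled correctly.
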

\subsection{Remarks on Refinements}
Refining Young's inequality can go forever, in the sense that we can find further refining terms for all refinements in the literature. This is done by successively applying the same or other refinements.\\

\begin{proposition}\label{double_refinement}
Let $a,b>0, N,M\in \mathbb{N}$ and $0\leq\nu\leq 1$. Then
$$a^{\nu}b^{1-\nu}+S_N(\nu;a,b)+S_M(\alpha;x,y)\leq\nu a+(1-\nu)b,$$
where $\alpha=[2^{N}\nu]+1-2^{N}\nu, x=\sqrt[2^{N}]{a^{[2^N\nu]}b^{2^{N}-[2^{N}\nu]}}$ and $y=\sqrt[2^{N}]
{a^{[2^N\nu]+1}b^{2^{N}-[2^{N}\nu]-1}}.$
\end{proposition}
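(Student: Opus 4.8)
The plan is to apply the one-term base refinement (\ref{the_first_refinement}) twice, using the exact-remainder identity (\ref{exact_difference})--(\ref{Rformula}) as the link between the two applications.

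First I would record what the remainder $R_N$ actually is. By (\ref{exact_difference}) together with the explicit expression (\ref{Rformula}),
$$\nu a+(1-\nu)b-S_N(\nu;a,b)=R_N(\nu;a,b)=\alpha\, x+(1-\alpha)\,y,$$
with $\alpha=[2^N\nu]+1-2^N\nu$, $x=\sqrt[2^N]{a^{[2^N\nu]}b^{2^N-[2^N\nu]}}$, $y=\sqrt[2^N]{a^{[2^N\nu]+1}b^{2^N-[2^N\nu]-1}}$ exactly as in the statement, and $1-\alpha=2^N\nu-[2^N\nu]$. Since $[2^N\nu]\le 2^N\nu<[2^N\nu]+1$ we have $\alpha\in(0,1]\subseteq[0,1]$, and $x,y>0$ because $a,b>0$; hence the triple $(x,y,\alpha)$ is admissible in (\ref{the_first_refinement}).

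Next I would apply (\ref{the_first_refinement}) with $(a,b,\nu,N)$ replaced by $(x,y,\alpha,M)$, obtaining
$$x^{\alpha}y^{1-\alpha}+S_M(\alpha;x,y)\le \alpha x+(1-\alpha)y .$$
The one thing to check here is the normalization $x^{\alpha}y^{1-\alpha}=a^{\nu}b^{1-\nu}$. Writing $k=[2^N\nu]$, the $a$-exponent of $x^{\alpha}y^{1-\alpha}$ is $2^{-N}\bigl(\alpha k+(1-\alpha)(k+1)\bigr)=2^{-N}(k+1-\alpha)=2^{-N}\cdot 2^N\nu=\nu$, and similarly the $b$-exponent equals $1-\nu$; this is the only place the precise form of $\alpha,x,y$ enters, and it is a routine exponent computation. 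Combining the last two displays yields
$$a^{\nu}b^{1-\nu}+S_M(\alpha;x,y)\le R_N(\nu;a,b)=\nu a+(1-\nu)b-S_N(\nu;a,b),$$
and transposing $S_N(\nu;a,b)$ to the left-hand side is exactly the assertion. I do not anticipate a genuine obstacle: the only points that need care are verifying $\alpha\in[0,1]$ (so that the second use of (\ref{the_first_refinement}) is legitimate) and the bookkeeping identity $x^{\alpha}y^{1-\alpha}=a^{\nu}b^{1-\nu}$, both of which are immediate.
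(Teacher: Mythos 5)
Your proposal is correct and follows exactly the paper's argument: use the identity (\ref{exact_difference}) to write $\nu a+(1-\nu)b-S_N(\nu;a,b)=\alpha x+(1-\alpha)y$, then apply (\ref{the_first_refinement}) to the triple $(x,y,\alpha)$ with $M$ terms and identify $x^{\alpha}y^{1-\alpha}=a^{\nu}b^{1-\nu}$. Your explicit verification of the exponent bookkeeping and of $\alpha\in[0,1]$ is a welcome addition that the paper leaves implicit.
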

\begin{proof}
Recalling (\ref{exact_difference}) and noting (\ref{the_first_refinement}), we have
\begin{eqnarray*}
\nu a+(1-\nu)b-S_N(\nu;a,b)&=&\alpha x+(1-\alpha)y\\
&\geq& x^{\alpha}y^{1-\alpha}+S_M(\alpha;x,y)\\
&=&a^{\nu}b^{1-\nu}+S_M(\alpha;x,y).
\end{eqnarray*}
This yields that
$$a^{\nu}b^{1-\nu}+S_N(\nu;a,b)+S_M(\alpha;x,y)\leq\nu a+(1-\nu)b.$$
\end{proof}

Following the same reasoning as in Theorem \ref{refined_square}, we obtain the following refinement of the squared version.
\begin{theorem}
Let $a,b>0, N,M\in \mathbb{N}$ and $0\leq\nu\leq 1$. Then
\begin{align*}
& \left(a^{\nu}b^{1-\nu}\right)^2+s_1^2(\nu)(a-b)^2+S_M(\alpha;x,y)\\
&+\sum_{j=2}^{N}s_j(\nu)
\left(\sqrt[2^{j-1}]{b^{2^{j-1}-k_j(\nu)}a^{k_j(\nu)}}-\sqrt[2^{j-1}]{a^{k_j(\nu)+1}b^{2^{j-1}-k_j(\nu)-1}}\right)^2\leq(\nu a+(1-\nu)b)^2,
\end{align*}
where $\alpha=[2^{N}\nu]+1-2^{N}\nu, x=\sqrt[2^{N}]{a^{2[2^N\nu]}b^{2^{N+1}-2[2^{N}\nu]}}$ and $y=\sqrt[2^{N}]
{a^{2[2^N\nu]+2}b^{2^{N+1}-2[2^{N}\nu]-2}}.$
\end{theorem}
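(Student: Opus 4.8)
The plan is to run the same argument that upgrades \eqref{the_first_refinement} to Theorem \ref{refined_square}, but to feed in the two-layer inequality of Proposition \ref{double_refinement} in place of \eqref{the_first_refinement}.

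First I would invoke Proposition \ref{double_refinement} in the form
$$a^{\nu}b^{1-\nu}+S_N(\nu;a,b)+S_M(\alpha;x_0,y_0)\leq \nu a+(1-\nu)b,$$
where $\alpha=[2^{N}\nu]+1-2^{N}\nu$, $x_0=\sqrt[2^{N}]{a^{[2^N\nu]}b^{2^{N}-[2^{N}\nu]}}$ and $y_0=\sqrt[2^{N}]{a^{[2^N\nu]+1}b^{2^{N}-[2^{N}\nu]-1}}$. In this inequality I would then substitute $a\mapsto a^{2}$ and $b\mapsto b^{2}$. Since $\alpha$ depends only on $\nu$, it is untouched; and because every radical in \eqref{SN_definition} and in $x_0,y_0$ carries a power-of-two index, the substitution halves those indices: $S_N(\nu;a^{2},b^{2})$ becomes $s_1(\nu)(a-b)^{2}+\sum_{j=2}^{N}s_j(\nu)\bigl(\sqrt[2^{j-1}]{b^{2^{j-1}-k_j(\nu)}a^{k_j(\nu)}}-\sqrt[2^{j-1}]{a^{k_j(\nu)+1}b^{2^{j-1}-k_j(\nu)-1}}\bigr)^{2}$, which is precisely the sum in the statement, while $(x_0,y_0)$ turns into the pair $(x,y)$ recorded in the theorem, so the $S_M$ term becomes $S_M(\alpha;x,y)$.

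Next, exactly as in the proof of Theorem \ref{refined_square}, I would add $s_1^{2}(\nu)(a-b)^{2}$ to both sides of the inequality obtained after the substitution, and then add and subtract $(\nu a+(1-\nu)b)^{2}$ on the right-hand side. After collecting, the right-hand side reads $(\nu a+(1-\nu)b)^{2}$ plus the scalar $(\nu a^{2}+(1-\nu)b^{2})+s_1^{2}(\nu)(a-b)^{2}-s_1(\nu)(a-b)^{2}-(\nu a+(1-\nu)b)^{2}$. The last step is to note that this scalar is $0$: from $\nu a^{2}+(1-\nu)b^{2}-(\nu a+(1-\nu)b)^{2}=\nu(1-\nu)(a-b)^{2}$ the claim reduces to $s_1(\nu)-s_1^{2}(\nu)=\nu(1-\nu)$, and a direct check of the definition of $s_1$ gives $s_1(\nu)=\nu$ for $0\le\nu<\tfrac12$ and $s_1(\nu)=1-\nu$ for $\tfrac12\le\nu\le1$, so the identity holds.

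The one point I would write out with care, rather than dismiss as routine, is the bookkeeping in the second paragraph — verifying that $a\mapsto a^{2},b\mapsto b^{2}$ collapses the $j=1$ term of $S_N$ to $s_1(\nu)(a-b)^{2}$, turns the $j\geq2$ radicals from $2^{j}$-th roots into $2^{j-1}$-th roots with the exponents displayed, and sends $\sqrt[2^{N}]{a^{[2^N\nu]}b^{2^{N}-[2^{N}\nu]}}$ to $\sqrt[2^{N}]{a^{2[2^{N}\nu]}b^{2^{N+1}-2[2^{N}\nu]}}$ (and similarly for $y_0$). Once that is pinned down, the remainder is the identical scalar identity already used for Theorem \ref{refined_square}, so no new obstacle arises.
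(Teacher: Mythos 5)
Your proposal is correct and is exactly the argument the paper intends: the paper gives no written proof beyond the remark ``following the same reasoning as in Theorem \ref{refined_square}'', and your route --- substituting $a\mapsto a^{2}$, $b\mapsto b^{2}$ into Proposition \ref{double_refinement} and then repeating the $s_1$ versus $s_1^{2}$ bookkeeping with the identity $s_1(\nu)-s_1^{2}(\nu)=\nu(1-\nu)$ --- is precisely that reasoning, with the substitution details (including why $x_0,y_0$ become the stated $x,y$) correctly verified.
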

Applying the same reasoning gives refined reverses, as well. Before stating the reversed version, let $\alpha_N(\nu)=[2^{N}\nu]+1-2^{N}\nu,$ $x_N(\nu;a,b)=\sqrt[2^{N}]{a^{[2^N\nu]}b^{2^{N}-[2^{N}\nu]}}$ and $y_N(\nu;a,b)=\sqrt[2^{N}]
{a^{[2^N\nu]+1}b^{2^{N}-[2^{N}\nu]-1}}.$ Then the reversed versions can be stated in terms of these quantities as follows.
\begin{theorem}
Let $a,b>0, N,M\in\mathbb{N}$. If $0\leq\nu\leq\frac{1}{2}$, then
\begin{eqnarray*}
\nu a+(1-\nu)b+S_N(2\nu;\sqrt{ab},a)&+&S_M\left(\alpha_N(2\nu);x_N(2\nu;\sqrt{ab},a),y_N(2\nu;\sqrt{ab},a)\right)\\
&\leq& a^{\nu}b^{1-\nu}+(1-\nu)(\sqrt{a}-\sqrt{b})^2,
\end{eqnarray*}
On the other hand, if $\frac{1}{2}\leq\nu\leq 1$, then
\begin{eqnarray*}
\nu a+(1-\nu)b&+&S_N(2-2\nu;\sqrt{ab},b)\\
&+&S_M\left(\alpha_N(2-2\nu);x_N(2-2\nu;\sqrt{ab},b),y_N(2-2\nu;\sqrt{ab},b)\right)\\
&\leq& a^{\nu}b^{1-\nu}+\nu(\sqrt{a}-\sqrt{b})^2,
\end{eqnarray*}
\end{theorem}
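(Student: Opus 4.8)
The plan is to run the argument of Theorem \ref{our_first_reverse} one step further, inserting a second application of the basic refinement (\ref{the_first_refinement}) at the moment a weighted arithmetic mean appears — exactly the nesting used in Proposition \ref{double_refinement}. It suffices to treat $0\le\nu\le\frac12$; the range $\frac12\le\nu\le1$ is handled by identical computations after interchanging $a$ and $b$ and replacing $\nu$ by $1-\nu$.

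First I would reproduce the opening identity from the proof of Theorem \ref{our_first_reverse}:
$$a^{\nu}b^{1-\nu}+(1-\nu)(\sqrt a-\sqrt b)^2-\bigl(\nu a+(1-\nu)b\bigr)=a^{\nu}b^{1-\nu}+(1-2\nu)a+2\nu\sqrt{ab}-2\sqrt{ab}.$$
Now, rather than bounding $(1-2\nu)a+2\nu\sqrt{ab}$ by (\ref{the_first_refinement}) at once, I would apply the exact-difference identity (\ref{exact_difference}) with $(\sqrt{ab},a,2\nu)$ in place of $(a,b,\nu)$ to write
$$(1-2\nu)a+2\nu\sqrt{ab}=R_N(2\nu;\sqrt{ab},a)+S_N(2\nu;\sqrt{ab},a),$$
and then note, straight from formula (\ref{Rformula}) and the definitions of $\alpha_N,x_N,y_N$, that $R_N(2\nu;\sqrt{ab},a)=\alpha_N(2\nu)\,x+(1-\alpha_N(2\nu))\,y$ with $x=x_N(2\nu;\sqrt{ab},a)$ and $y=y_N(2\nu;\sqrt{ab},a)$. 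Since this is again a weighted arithmetic mean, (\ref{the_first_refinement}) applies a second time (with $M$ in place of $N$ and weight $\alpha_N(2\nu)$) and gives $R_N(2\nu;\sqrt{ab},a)\ge x^{\alpha_N(2\nu)}y^{1-\alpha_N(2\nu)}+S_M\bigl(\alpha_N(2\nu);x,y\bigr)$.

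The one computation that genuinely has to be checked is the collapse of the geometric mean: $x^{\alpha_N(2\nu)}y^{1-\alpha_N(2\nu)}=(\sqrt{ab})^{2\nu}a^{1-2\nu}=a^{1-\nu}b^{\nu}$. This is the same exponent bookkeeping already carried out inside the proof of Proposition \ref{double_refinement} (where $x^{\alpha}y^{1-\alpha}=a^{\nu}b^{1-\nu}$), transported here through the substitution $(a,b,\nu)\mapsto(\sqrt{ab},a,2\nu)$. Feeding all of this back into the opening identity, the left-hand difference is at least
$$\bigl(a^{\nu}b^{1-\nu}+a^{1-\nu}b^{\nu}-2\sqrt{ab}\bigr)+S_N(2\nu;\sqrt{ab},a)+S_M\bigl(\alpha_N(2\nu);x,y\bigr),$$
and the parenthesized term is nonnegative by the Heinz inequality (\ref{heinz}). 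Rearranging yields precisely the asserted inequality for $0\le\nu\le\frac12$, and the case $\frac12\le\nu\le1$ comes out the same way with $a\leftrightarrow b$, $\nu\mapsto1-\nu$.

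I do not anticipate a real obstacle here: the whole proof is one level of nesting beyond Theorem \ref{our_first_reverse}, reusing the mechanism of Proposition \ref{double_refinement} verbatim. The only points requiring care are clerical — keeping the arguments of $S_N$, $R_N$, $x_N$, $y_N$, $\alpha_N$ straight under the substitution $(a,b,\nu)\mapsto(\sqrt{ab},a,2\nu)$, and confirming that the nested geometric mean really equals $a^{1-\nu}b^{\nu}$ so that the Heinz inequality is still available to close the estimate.
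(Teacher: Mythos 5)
Your proposal is correct and follows essentially the same route as the paper: the paper's proof starts from the same identity $a^{\nu}b^{1-\nu}+(1-\nu)(\sqrt a-\sqrt b)^2-(\nu a+(1-\nu)b)=a^{\nu}b^{1-\nu}+(1-2\nu)a+2\nu\sqrt{ab}-2\sqrt{ab}$, applies Proposition \ref{double_refinement} to the weighted mean $(1-2\nu)a+2\nu\sqrt{ab}$ under the substitution $(a,b,\nu)\mapsto(\sqrt{ab},a,2\nu)$, and closes with the Heinz inequality, exactly as you do. The only difference is that you unpack the proof of Proposition \ref{double_refinement} inline (via the identity (\ref{exact_difference}) and a second application of (\ref{the_first_refinement})) rather than citing it, which is a purely cosmetic distinction.
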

\begin{proof}
For $0\leq\nu\leq\frac{1}{2},$ we have, by Theorem \ref{double_refinement},
\begin{eqnarray*}
a^{\nu}b^{1-\nu}&+&(1-\nu)(\sqrt{a}-\sqrt{b})^2-\left(\nu a+(1-\nu)b\right)\\
&=&a^{\nu}b^{1-\nu}+(1-2\nu)a+2\nu \sqrt{ab}-2\sqrt{ab}\\
&\geq&a^{\nu}b^{1-\nu}+a^{1-2\nu}\sqrt{ab}^{2\nu}-2\sqrt{ab}+\\
&&+S_N(2\nu;\sqrt{ab},a)+S_M\left(\alpha_N(2\nu);x_N(2\nu;\sqrt{ab},a),y_N(2\nu;\sqrt{ab},a)\right)\\
&\geq&S_N(2\nu;\sqrt{ab},a)+S_M\left(\alpha_N(2\nu);x_N(2\nu;\sqrt{ab},a),y_N(2\nu;\sqrt{ab},a)\right).
\end{eqnarray*}
Observe that the formulae are obtained by the formulae of Theorem \ref{double_refinement} on replacing $\nu,a$ and $b$ by $2\nu,\sqrt{ab}$ and $a,$ respectively.\\
On the other hand, if $\frac{1}{2}\leq\nu\leq 1,$ we have
\begin{eqnarray*}
a^{\nu}b^{1-\nu}&+&\nu(\sqrt{a}-\sqrt{b})^2-\left(\nu a+(1-\nu)b\right)\\
&=&a^{\nu}b^{1-\nu}+(2\nu-1)b+(2-2\nu)\sqrt{ab}-2\sqrt{ab}\\
&\geq&a^{\nu}b^{1-\nu}+a^{1-\nu}b^{\nu}-2\sqrt{ab}+S_N(2-2\nu,\sqrt{ab},b)+\\
&&+S_M\left(\alpha_N(2-2\nu),x_N(2-2\nu;\sqrt{ab},b),y_N(2-2\nu,\sqrt{ab},b)\right)\\
&\geq&S_N(2-2\nu,\sqrt{ab},b)\\
&&+S_M\left(\alpha_N(2-2\nu),x_N(2-2\nu;\sqrt{ab},b),y_N(2-2\nu,\sqrt{ab},b)\right).
\end{eqnarray*}
\end{proof}

\subsection{The Kantorovich Constant}
Recent refinements of Young's inequality have deployed the Kantorovich constant $K(t,2):=\frac{(t+1)^2}{4t}, t>0.$ For example, it is proved in \cite{zuo} that
\begin{equation}\label{kanto_young}
K(h,2)^{r}a^{\nu}b^{1-\nu}\leq\nu a+(1-\nu)b, 0\leq\nu\leq 1,\;r=\min\{\nu,1-\nu\}, h=\frac{b}{a}.
\end{equation}
What makes this inequality a refinement of (\ref{original_young}) is the fact that $K(t,2)\geq 1$ when $t>0.$ Some refinements of this inequality have been given in the literature, but again with one refining term. For example, it is shown in \cite{wu} that for $r=\min\{\nu,1-\nu\}$ one has
\begin{equation}\label{kanto_ref}
K(\sqrt{h},2)^{r'}a^{\nu}b^{1-\nu}+r(\sqrt{a}-\sqrt{b})^2\leq \nu a+(1-\nu)b, r'=\min\{2r,1-2r\},h=\frac{b}{a}.
\end{equation}
 To better state our next result, which is a refinement of (\ref{kanto_ref}), we use the notations $\alpha_N(\nu)=1+[2^{N}\nu]-2^{N}\nu$ and $\beta_N(\nu)=\min\{\alpha_N(\nu),1-\alpha_N(\nu)\}$ for $N\in\mathbb{N}$.

\begin{theorem}
Let $a,b>0,0\leq\nu\leq 1$ and $N\in\mathbb{N}$. Then
\begin{equation}\label{our_ref_kanto}
K\left(\sqrt[2^N]{\frac{b}{a}},2\right)^{\beta_N(\nu)}a^{\nu}b^{1-\nu}+S_N(\nu;a,b)\leq \nu a+(1-\nu)b.
\end{equation}
\end{theorem}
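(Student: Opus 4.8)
The plan is to bootstrap from the exact-difference identity \eqref{exact_difference} together with the known Kantorovich refinement \eqref{kanto_young}, applied not to the pair $(a,b)$ but to the endpoints of the last ``interval'' produced by the iterative scheme underlying $S_N$. Recall from \eqref{exact_difference} that
$$\nu a+(1-\nu)b-S_N(\nu;a,b)=R_N(\nu;a,b)=\alpha_N(\nu)\,x+(1-\alpha_N(\nu))\,y,$$
where I write $x=x_N(\nu;a,b)=\sqrt[2^{N}]{a^{[2^N\nu]}b^{2^{N}-[2^{N}\nu]}}$ and $y=y_N(\nu;a,b)=\sqrt[2^{N}]{a^{[2^N\nu]+1}b^{2^{N}-[2^{N}\nu]-1}}$, and $\alpha_N(\nu)=1+[2^N\nu]-2^N\nu\in[0,1]$. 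So it suffices to prove the one-term Kantorovich bound
$$K\!\left(\sqrt[2^N]{\tfrac{b}{a}},2\right)^{\beta_N(\nu)} a^{\nu}b^{1-\nu}\le \alpha_N(\nu)\,x+(1-\alpha_N(\nu))\,y,$$
and then add $S_N(\nu;a,b)$ to both sides.

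The second step is to recognize that $\alpha_N(\nu) x+(1-\alpha_N(\nu)) y = x\nabla_{1-\alpha_N(\nu)}\,y$ is exactly the right-hand side of Young's inequality for the pair $(x,y)$ with weight $1-\alpha_N(\nu)$, so \eqref{kanto_young} applied with base point ratio $h=y/x$ and exponent $r=\min\{1-\alpha_N(\nu),\alpha_N(\nu)\}=\beta_N(\nu)$ gives
$$K\!\left(\tfrac{y}{x},2\right)^{\beta_N(\nu)}\, x^{\alpha_N(\nu)}y^{1-\alpha_N(\nu)}\le \alpha_N(\nu) x+(1-\alpha_N(\nu)) y.$$
Now I need two identifications. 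First, $x^{\alpha_N(\nu)}y^{1-\alpha_N(\nu)}=a^{\nu}b^{1-\nu}$: substituting the definitions of $x,y$ the exponent of $a$ becomes $2^{-N}\bigl(\alpha_N(\nu)[2^N\nu]+(1-\alpha_N(\nu))([2^N\nu]+1)\bigr)=2^{-N}([2^N\nu]+1-\alpha_N(\nu))=2^{-N}(2^N\nu)=\nu$, and similarly the exponent of $b$ is $1-\nu$; this is the same normalization already used in the proof of Proposition~\ref{double_refinement} via \eqref{the_first_refinement}. Second, $y/x=\sqrt[2^N]{b/a}$, since $y/x=\bigl(a^{[2^N\nu]+1}b^{2^N-[2^N\nu]-1}\big/a^{[2^N\nu]}b^{2^N-[2^N\nu]}\bigr)^{1/2^N}=(a/b)^{1/2^N}$—and here one should be slightly careful: $K(t,2)=K(1/t,2)$ because $K(t,2)=\frac{(t+1)^2}{4t}$ is invariant under $t\mapsto 1/t$, so whether the ratio comes out as $\sqrt[2^N]{b/a}$ or its reciprocal is immaterial and we may write it as $K(\sqrt[2^N]{b/a},2)$. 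Combining these identifications with the displayed inequality finishes the argument once we add back $S_N(\nu;a,b)$.

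I expect the only genuine subtlety—more a bookkeeping point than a real obstacle—to be the two boundary/degenerate situations. When $2^N\nu$ is an integer we have $\alpha_N(\nu)\in\{0,1\}$ (in fact $\alpha_N(\nu)=1$, with the convention that $[2^N\nu]=2^N\nu$), so $\beta_N(\nu)=0$, $K^{0}=1$, and the claimed inequality degenerates to $a^\nu b^{1-\nu}+S_N(\nu;a,b)\le\nu a+(1-\nu)b$, which is exactly \eqref{the_first_refinement}; so the statement is consistent there. The other thing to check is that the exponent in \eqref{kanto_young} is applied with $r=\min\{$weight$,1-$weight$\}$ for the weight attached to the \emph{correct} variable; since $K(\cdot,2)$ is symmetric in its argument under inversion and Young's inequality \eqref{kanto_young} is symmetric under simultaneously swapping $(a,b)$ and $\nu\leftrightarrow 1-\nu$, this causes no difficulty and $\beta_N(\nu)=\min\{\alpha_N(\nu),1-\alpha_N(\nu)\}$ is the right exponent regardless of the labelling. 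With these points dispatched the proof is a three-line chain: \eqref{exact_difference}, then \eqref{kanto_young} on $(x,y)$, then the two algebraic identifications $x^{\alpha_N}y^{1-\alpha_N}=a^\nu b^{1-\nu}$ and $y/x=\sqrt[2^N]{b/a}$.
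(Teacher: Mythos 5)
Your proposal is correct and follows essentially the same route as the paper: both start from the identity \eqref{exact_difference} writing $\nu a+(1-\nu)b-S_N(\nu;a,b)=\alpha_N(\nu)x+(1-\alpha_N(\nu))y$, apply the Kantorovich-refined Young inequality \eqref{kanto_young} to the pair $(x,y)$ with weight $\alpha_N(\nu)$, and then use the identifications $x^{\alpha_N(\nu)}y^{1-\alpha_N(\nu)}=a^{\nu}b^{1-\nu}$ and $K(y/x,2)=K(\sqrt[2^N]{b/a},2)$. Your extra checks (the exponent computation and the degenerate case $\beta_N(\nu)=0$) are correct and only make the argument more complete than the paper's.
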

\begin{proof}
By (\ref{exact_difference}) and (\ref{Rformula}) we have
\begin{eqnarray*}
\nu a&+&(1-\nu)b-S_N(\nu;a,b)=R_N(\nu;a,b)\\
&=&\alpha_N(\nu)\sqrt[2^{N}]{a^{[2^N\nu]}b^{2^{N}-[2^{N}\nu]}}+(1-\alpha_N(\nu))\sqrt[2^{N}]
{a^{[2^N\nu]+1}b^{2^{N}-[2^{N}\nu]-1}}\\
&\geq&K\left(\frac{\sqrt[2^{N}]
{a^{[2^N\nu]+1}b^{2^{N}-[2^{N}\nu]-1}}}{\sqrt[2^{N}]{a^{[2^N\nu]}b^{2^{N}-[2^{N}\nu]}}},2\right)^{\beta_N(\nu)}\times
\left(\sqrt[2^{N}]{a^{[2^N\nu]}b^{2^{N}-[2^{N}\nu]}}\right)^{\alpha_N(\nu)}\\
&\times&\left(\sqrt[2^{N}]
{a^{[2^N\nu]+1}b^{2^{N}-[2^{N}\nu]-1}}\right)^{1-\alpha_N(\nu)}\;({\text{by}}\;(\ref{kanto_young}))\\
&=&K\left(\sqrt[2^N]{\frac{b}{a}},2\right)^{\beta_N(\nu)}a^{\nu}b^{1-\nu}.
\end{eqnarray*}
\end{proof}

Notice that when $N=1$, (\ref{our_ref_kanto}) gives (\ref{kanto_ref}). On the other hand, when $N=0$, we get (\ref{kanto_young}), where by convention $S_0(\nu;a,b)=0.$

In \cite{sabLM}, the following version of Young's inequality was proved.
$$|1-2\nu|^{2r_0}(a^{\nu}b^{1-\nu})^2+r_0^2(a+b)^2\leq (\nu a+(1-\nu)b)^2,\;{\text{where}}\;r_0=\min\{\nu,1-\nu\}.$$
On the other hand, it is proved in \cite{xing} that
$$r_0^{2r_0}a^{\nu}b^{1-\nu}+r_0(\sqrt{a}-\sqrt{b})^2\leq \nu^2a+(1-\nu)^2b\;{\text{where}}\;r_0=\min\{\nu,1-\nu\}.$$
Now using (\ref{our_ref_kanto}), we easily get the following refined versions of these inequalities, invoking the Kantorovich constant,
\begin{theorem}
Let $a,b\geq 0$ and $N\in\mathbb{N}$. If $0\leq \nu\leq \frac{1}{2}$ then
\begin{eqnarray*}
&&(1-2\nu)^{2\nu}K\left(\sqrt[2^N]{\frac{b}{(1-2\nu)a}},2\right)^{\beta_N(2\nu)}(a^{\nu}b^{1-\nu})^2+\nu^2(a+b)^2\\
&+&(1-2\nu)bS_N\left(1-2\nu;\frac{b}{1-2\nu},a\right)\leq (\nu a+(1-\nu)b)^2.
\end{eqnarray*}
If $\frac{1}{2}\leq\nu\leq 1$ we have
\begin{eqnarray*}
&&(2\nu-1)^{2-2\nu}K\left(\sqrt[2^N]{\frac{(2\nu-1)b}{a}},2\right)^{\beta_N(2\nu-1)}(a^{\nu}b^{1-\nu})^2\\
&+&(1-\nu)^2(a+b)^2+(2\nu-1)aS_N\left(2\nu-1;\frac{a}{2\nu-1},b\right)\leq (\nu a+(1-\nu)b)^2.
\end{eqnarray*}
\end{theorem}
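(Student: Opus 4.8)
The plan is to deduce both asserted inequalities directly from the Kantorovich-refined Young inequality $(\ref{our_ref_kanto})$, using the symmetry $S_N(\mu;x,y)=S_N(1-\mu;y,x)$ established above together with an elementary quadratic identity. I will spell out the case $0\le\nu\le\tfrac12$; the case $\tfrac12\le\nu\le1$ is the mirror image obtained by interchanging $a$ and $b$ and replacing $\nu$ by $1-\nu$. The cases $a=0$, $b=0$ and $\nu=\tfrac12$ are straightforward to check directly, so I assume $a,b>0$ and $0\le\nu<\tfrac12$.

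First I would apply $(\ref{our_ref_kanto})$ with $\nu$ replaced by $2\nu\in[0,1)$, with $a$ unchanged, and with $b$ replaced by $\tfrac{b}{1-2\nu}$. Since $(1-2\nu)\tfrac{b}{1-2\nu}=b$, and since $S_N\big(2\nu;a,\tfrac{b}{1-2\nu}\big)=S_N\big(1-2\nu;\tfrac{b}{1-2\nu},a\big)$ by the symmetry above, this becomes
\begin{equation*}
K\!\left(\sqrt[2^N]{\tfrac{b}{(1-2\nu)a}},2\right)^{\beta_N(2\nu)} a^{2\nu}\Big(\tfrac{b}{1-2\nu}\Big)^{1-2\nu}+S_N\!\Big(1-2\nu;\tfrac{b}{1-2\nu},a\Big)\ \le\ 2\nu a+b .
\end{equation*}

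Next I would multiply through by the nonnegative factor $(1-2\nu)b$ and use the exponent collapse $(1-2\nu)b\cdot a^{2\nu}\big(\tfrac{b}{1-2\nu}\big)^{1-2\nu}=(1-2\nu)^{2\nu}a^{2\nu}b^{2-2\nu}=(1-2\nu)^{2\nu}\big(a^\nu b^{1-\nu}\big)^2$. Combined with the elementary identity $(1-2\nu)b\,(2\nu a+b)=\big(\nu a+(1-\nu)b\big)^2-\nu^2(a+b)^2$ (checked by expanding and using $(1-\nu)^2-\nu^2=1-2\nu$), and transposing the term $\nu^2(a+b)^2$ to the left, this is exactly the first asserted inequality.

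For $\tfrac12\le\nu\le1$ I would run the same three steps with the substitution $\nu\mapsto 2\nu-1\in(0,1]$, $a\mapsto\tfrac{a}{2\nu-1}$, $b\mapsto b$ in $(\ref{our_ref_kanto})$: the Kantorovich argument then comes out as $\sqrt[2^N]{\tfrac{(2\nu-1)b}{a}}$ and the exponent as $\beta_N(2\nu-1)$ with no further rewriting, the right-hand side becomes $a+2(1-\nu)b$, one multiplies by $(2\nu-1)a\ge0$, uses $(2\nu-1)a\big(\tfrac{a}{2\nu-1}\big)^{2\nu-1}b^{2-2\nu}=(2\nu-1)^{2-2\nu}\big(a^\nu b^{1-\nu}\big)^2$, and finishes with $(2\nu-1)a\,(a+2(1-\nu)b)=\big(\nu a+(1-\nu)b\big)^2-(1-\nu)^2(a+b)^2$. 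I do not anticipate a genuine obstacle here: the only delicate point is to choose the substitution so that the Kantorovich argument, the exponent $\beta_N$, and the power of $(1-2\nu)$ (resp. $(2\nu-1)$) come out exactly as stated, for which the symmetry $S_N(\mu;x,y)=S_N(1-\mu;y,x)$ is the key tool — if instead one substitutes $\nu\mapsto1-2\nu$ directly, one must also invoke $K(t,2)=K(1/t,2)$ and $\beta_N(1-t)=\beta_N(t)$ (the latter from $\alpha_N(1-t)=1-\alpha_N(t)$) to match the stated form; the remainder is routine bookkeeping in the two quadratic identities.
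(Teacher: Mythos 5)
Your argument is correct and is precisely the derivation the paper intends: the theorem is stated with only the remark that it follows ``easily'' from (\ref{our_ref_kanto}), and your substitutions ($\nu\mapsto 2\nu$, $b\mapsto b/(1-2\nu)$ for the first case, and the mirror substitution for the second), together with the symmetry $S_N(\mu;x,y)=S_N(1-\mu;y,x)$ and the two quadratic identities, supply exactly the missing details; I verified both identities and the exponent bookkeeping, and they check out.
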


\begin{theorem}
Let $a,b>0$ and $N\in\mathbb{N}$. If $0\leq \nu\leq \frac{1}{2}$ then
\begin{eqnarray*}
\nu^{2\nu}K\left(\sqrt[2^N]{\nu\sqrt{\frac{a}{b}}},2\right)^{\beta_N(1-2\nu)}a^{\nu}b^{1-\nu}&+&\nu^2(\sqrt{a}-\sqrt{b})^2+S_N\left(1-2\nu;b,\nu\sqrt{ab}\right)\\
&\leq& \nu^2a+(1-\nu)^2b.
\end{eqnarray*}
If $\frac{1}{2}\leq \nu\leq 1,$ then
\begin{eqnarray*}
&&\hspace{-2cm}(1-\nu)^{2-2\nu}K\left(\sqrt[2^N]{(1-\nu)\sqrt{\frac{b}{a}}},2\right)^{\beta_N(2\nu-1)}a^{\nu}b^{1-\nu}+(1-\nu)^2(\sqrt{a}-\sqrt{b})^2\\
&+&S_N\left(2\nu-1;a,(1-\nu)\sqrt{ab}\right)\leq \nu^2 a+(1-\nu)^2b.
\end{eqnarray*}
\end{theorem}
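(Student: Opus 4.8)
The plan is to deduce both inequalities directly from the Kantorovich-refined Young inequality~(\ref{our_ref_kanto}), which already carries all $N$ refining terms, by a judicious substitution of the weight parameter and of the two variables; after that only a one-line algebraic identity is needed to collapse the right-hand side. No fresh induction or case analysis on the floor functions $k_j,r_j,s_j$ will be required.

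For $0\le\nu\le\tfrac12$, I would set $t=1-2\nu\in[0,1]$ and apply~(\ref{our_ref_kanto}) with weight $t$ to the pair $x=b$ and $y=\nu\sqrt{ab}$. The key computations are $x^ty^{1-t}=b^{1-2\nu}(\nu\sqrt{ab})^{2\nu}=\nu^{2\nu}a^\nu b^{1-\nu}$ and $y/x=\nu\sqrt{a/b}$, so that~(\ref{our_ref_kanto}) becomes
\[
\nu^{2\nu}K\!\left(\sqrt[2^N]{\nu\sqrt{\tfrac{a}{b}}},2\right)^{\beta_N(1-2\nu)}\!a^\nu b^{1-\nu}+S_N\!\left(1-2\nu;b,\nu\sqrt{ab}\right)\le (1-2\nu)b+2\nu^2\sqrt{ab}.
\]
Adding $\nu^2(\sqrt a-\sqrt b)^2$ to both sides and noting $(1-2\nu)b+2\nu^2\sqrt{ab}+\nu^2(a-2\sqrt{ab}+b)=\nu^2a+(1-\nu)^2b$ yields the first assertion. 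The case $\tfrac12\le\nu\le1$ is the mirror image: take $t=2\nu-1$, apply~(\ref{our_ref_kanto}) to $x=a$ and $y=(1-\nu)\sqrt{ab}$ (so that $x^ty^{1-t}=(1-\nu)^{2-2\nu}a^\nu b^{1-\nu}$ and $y/x=(1-\nu)\sqrt{b/a}$), then add $(1-\nu)^2(\sqrt a-\sqrt b)^2$ to both sides and use the symmetric identity to reach $\nu^2a+(1-\nu)^2b$.

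The only genuine point to verify — and it is elementary — is the pair of algebraic identities that make the right-hand sides telescope into $\nu^2a+(1-\nu)^2b$; both follow at once by expanding the squares. One should also dispatch the endpoint $\nu=0$ (resp.\ $\nu=1$), where the auxiliary variable $\nu\sqrt{ab}$ (resp.\ $(1-\nu)\sqrt{ab}$) vanishes and the substitution degenerates: there $\beta_N$ and $S_N$ both vanish and the claimed inequality reduces to $b\le b$ (resp.\ $a\le a$), so it holds trivially (or by continuity). Apart from this, the argument is nothing more than a quotation of~(\ref{our_ref_kanto}) together with the definition of the Kantorovich constant $K(\cdot,2)$.
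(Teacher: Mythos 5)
Your proposal is correct: the substitutions $t=1-2\nu$, $x=b$, $y=\nu\sqrt{ab}$ (and their mirror images) into~(\ref{our_ref_kanto}), followed by adding $\nu^2(\sqrt a-\sqrt b)^2$ and the telescoping identity $(1-2\nu)b+\nu^2(a+b)=\nu^2 a+(1-\nu)^2 b$, is exactly the route the paper intends when it states that the theorem follows ``using~(\ref{our_ref_kanto})''. The paper omits the details entirely, so your write-up is in fact more complete than the original, including the sensible remark about the degenerate endpoints $\nu=0$ and $\nu=1$.
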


\section{Convexity Approach}
Recall that a function $f:I\to \mathbb{R}^+$ is said to be log-convex if $f(\lambda t_1+(1-\lambda)t_2)\leq f^{\lambda}(t_1)f^{1-\lambda}(t_2)$ for $t_1,t_2\in I$ and $0\leq \lambda\leq 1.$ Young-type inequalities seem to be strongly related to log-convex functions. A good reference about this relation is the recent work \cite{saboam}.

\begin{proposition}\label{log_convex_first}
Let $f:[0,1]\to\mathbb{R}^{+}$ be log-convex and let $N\in\mathbb{N}$. Then
$$K\left(\sqrt[2^N]{\frac{f(1)}{f(0)}},2\right)^{\beta_N(t)}f(t)+S_{N}(t;f(1),f(0))\leq t f(1)+(1-t)f(0).$$
\end{proposition}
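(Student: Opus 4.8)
The plan is to apply the key inequality \eqref{our_ref_kanto} not to arbitrary positive reals but to the specific values $a=f(1)$ and $b=f(0)$, after using log-convexity to control $f(t)$ from below by the geometric weight $f(1)^t f(0)^{1-t}$. First I would recall that since $f:[0,1]\to\mathbb{R}^{+}$ is log-convex, writing $t=t\cdot 1+(1-t)\cdot 0$ gives
\begin{equation*}
f(t)\leq f(1)^{t}f(0)^{1-t},\qquad 0\leq t\leq 1.
\end{equation*}
Here I am implicitly using $f(0),f(1)>0$ so that the geometric mean makes sense and the Kantorovich factor $K\big(\sqrt[2^N]{f(1)/f(0)},2\big)$ is well defined.

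Next I would invoke \eqref{our_ref_kanto} with the substitution $a\mapsto f(1)$, $b\mapsto f(0)$ and $\nu\mapsto t$, which yields
\begin{equation*}
K\left(\sqrt[2^N]{\frac{f(0)}{f(1)}},2\right)^{\beta_N(t)}f(1)^{t}f(0)^{1-t}+S_N\big(t;f(1),f(0)\big)\leq t\,f(1)+(1-t)f(0).
\end{equation*}
Since $K(x,2)=\frac{(x+1)^2}{4x}$ is symmetric in the sense that $K(x,2)=K(1/x,2)$, the factor $K\big(\sqrt[2^N]{f(0)/f(1)},2\big)$ equals $K\big(\sqrt[2^N]{f(1)/f(0)},2\big)$, so the displayed bound matches the statement with $f(1)/f(0)$ inside the Kantorovich constant. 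Finally I would combine the two displays: because $K\big(\sqrt[2^N]{f(1)/f(0)},2\big)^{\beta_N(t)}\geq 1$ (as $K(x,2)\geq 1$ for $x>0$ and $\beta_N(t)\geq 0$), multiplying the log-convexity estimate $f(t)\leq f(1)^{t}f(0)^{1-t}$ by this nonnegative factor gives
\begin{equation*}
K\left(\sqrt[2^N]{\frac{f(1)}{f(0)}},2\right)^{\beta_N(t)}f(t)\leq K\left(\sqrt[2^N]{\frac{f(1)}{f(0)}},2\right)^{\beta_N(t)}f(1)^{t}f(0)^{1-t},
\end{equation*}
and adding $S_N(t;f(1),f(0))$ to both sides and chaining with the Kantorovich-refined Young inequality above produces exactly the claimed inequality.

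I do not anticipate a serious obstacle here: the proposition is essentially a transcription of \eqref{our_ref_kanto} through the log-convexity defining inequality. The only points requiring a moment of care are the symmetry $K(x,2)=K(1/x,2)$ needed to reconcile the orientation of $f(1)/f(0)$ versus $f(0)/f(1)$ in the Kantorovich argument, and the monotonicity step, which needs $\beta_N(t)\geq 0$ and $K\geq 1$ so that raising to the power $\beta_N(t)$ preserves the inequality $f(t)\leq f(1)^t f(0)^{1-t}$ after multiplication. Both are immediate from the definitions $K(t,2)=\frac{(t+1)^2}{4t}$ and $\beta_N(t)=\min\{\alpha_N(t),1-\alpha_N(t)\}\in[0,\tfrac12]$, so the proof is short.
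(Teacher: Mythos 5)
Your proposal is correct and follows essentially the same route as the paper: bound $f(t)\leq f(1)^{t}f(0)^{1-t}$ by log-convexity, multiply by the (nonnegative) Kantorovich factor, and then apply \eqref{our_ref_kanto} with $a=f(1)$, $b=f(0)$, $\nu=t$. Your explicit remark that $K(x,2)=K(1/x,2)$ reconciles the orientation $f(0)/f(1)$ versus $f(1)/f(0)$ is a point the paper leaves implicit, but it does not change the argument.
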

\begin{proof}
Note that log-convexity of $f$ implies
\begin{eqnarray*}
f(t)&=&f(t\cdot 1+(1-t)\cdot 0)\leq f^{t}(1)f^{1-t}(0).
\end{eqnarray*}
Consequently,
\begin{eqnarray*}
&&\hspace{-2cm}K\left(\sqrt[2^N]{\frac{f(1)}{f(0)}},2\right)^{\beta_N(t)}f(t)+S_{N}(t;f(1),f(0))\\
&\leq&K\left(\sqrt[2^N]{\frac{f(1)}{f(0)}},2\right)^{\beta_N(t)}f^{t}(1)f^{1-t}(0)+S_{N}(t;f(1),f(0))\\
&\leq& t f(1)+(1-t)f(0),
\end{eqnarray*}
where (\ref{our_ref_kanto}) has been used to obtain the last line.
\end{proof}
Thus, these refined Young-type inequalities can be applied to any log-convex function. In \cite{saboam} it has been proved that the functions
$$f_1(t)=\||A^{t}XB^{t}\||, f_2(t)=\||A^{t}XB^{1-t}\||, f_3(t)=\||A^{t}\||\;{\text{and}}\; f_4(t)=\tr(A^{t}XB^{1-t}X^*)$$ are log-convex on $[0,1].$ In this context $A,B\in \mathbb{M}_n^{+}$ and $X\in \mathbb{M}_n$. Employing Proposition \ref{log_convex_first} to these functions we reach the next result.
\begin{corollary}\label{corollary_for_operators}
Let $N\in\mathbb{N},A,B\in\mathbb{M}_n^{+}$, $X\in\mathbb{M}_n$, $0\leq t\leq 1$ and $\||\;\;\||$ be any unitarily invariant norm. Then
\begin{enumerate}
\item \begin{eqnarray*}
&&\hspace{-2cm}K\left(\sqrt[2^N]{\frac{\||AXB\||}{\||X\||}},2\right)^{\beta_N(t)}\||A^{t}XB^{t}\||+S_N(t;\||AXB\||,\||X\||)\\
&\leq& t\||AXB\||+(1-t)\||X\||;\\
\end{eqnarray*}
\item \begin{eqnarray*}
&&\hspace{-2cm}K\left(\sqrt[2^N]{\frac{\||AX\||}{\||XB\||}},2\right)^{\beta_N(t)}\||A^{t}XB^{1-t}\||+S_N(t;\||AX\||,\||XB\||)\\
&\leq& t\||AX\||+(1-t)\||XB\||;\\
\end{eqnarray*}
\item \begin{eqnarray*}
K\left(\sqrt[2^N]{\frac{\||A\||}{\||I\||}},2\right)^{\beta_N(t)}\||A^{t}\||+S_N(t;\||A\||,\||I\||)\leq t\||A\||+(1-t)\;\||I\||;\\
\end{eqnarray*}
\item \begin{eqnarray*}
&&\hspace{-1cm}K\left(\sqrt[2^N]{\frac{\tr(A|X|^2)}{\tr(B|X^*|^2)}},2\right)^{\beta_N(t)}\tr(A^{t}XB^{1-t}X^*)+S_N(t;\tr(A|X|^2),\tr(B|X^*|^2)\\
&\leq& t\;\tr(A|X|^2)+(1-t)\tr(B|X^*|^2).
\end{eqnarray*}
\end{enumerate}
\end{corollary}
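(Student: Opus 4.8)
The plan is to derive all four inequalities as direct specializations of Proposition \ref{log_convex_first}. The only substantial ingredient is already on hand: by \cite{saboam} each of
$$f_1(t)=\||A^{t}XB^{t}\||,\quad f_2(t)=\||A^{t}XB^{1-t}\||,\quad f_3(t)=\||A^{t}\||,\quad f_4(t)=\tr(A^{t}XB^{1-t}X^*)$$
is log-convex on $[0,1]$, so Proposition \ref{log_convex_first} applies verbatim to $f=f_i$ and yields
$$K\!\left(\sqrt[2^N]{f_i(1)/f_i(0)},2\right)^{\beta_N(t)}f_i(t)+S_N\!\left(t;f_i(1),f_i(0)\right)\le t f_i(1)+(1-t)f_i(0).$$
Hence the entire proof reduces to evaluating the two endpoint values $f_i(0)$ and $f_i(1)$ for each $i$ and inserting them into this inequality.

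These evaluations are routine, and they are the steps to carry out in turn. Using $A^{0}=B^{0}=I$ one reads off $f_1(1)=\||AXB\||$, $f_1(0)=\||X\||$; then $f_2(1)=\||AX\||$, $f_2(0)=\||XB\||$; then $f_3(1)=\||A\||$, $f_3(0)=\||I\||$; substituting each pair into the displayed inequality produces parts (1), (2), and (3) respectively. Finally, for $f_4$ the endpoints $f_4(1)$ and $f_4(0)$ are simplified by the cyclic invariance of the trace together with the identities $X^*X=|X|^2$ and $XX^*=|X^*|^2$; this rewrites them as $\tr(A|X|^2)$ and $\tr(B|X^*|^2)$, and Proposition \ref{log_convex_first} then gives part (4).

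I do not anticipate a genuine obstacle, because log-convexity --- the one nontrivial input --- is imported from \cite{saboam}, and the refinement itself is precisely Proposition \ref{log_convex_first}. The only point deserving a line of care is the well-definedness of the right-hand sides: both the Kantorovich factor $K(\cdot,2)^{\beta_N(t)}$ and the term $S_N(t;\cdot,\cdot)$ require each $f_i$ to be strictly positive throughout $[0,1]$, and the identification $A^{0}=B^{0}=I$ at the endpoint (rather than a support projection) is what makes $f_i(0)$ equal the stated norm or trace; both are guaranteed when $A,B\in\mathbb{M}_n^{++}$, or otherwise by working on the open interval and passing to the limit $t\to 0^{+}$. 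The small amount of trace bookkeeping needed for part (4) --- making sure the right endpoint lands in the right slot of $K$, $S_N$, and the affine bound --- is the most error-prone step, though it is entirely mechanical.
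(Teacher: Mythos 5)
Your proposal is correct and matches the paper exactly: the paper's entire ``proof'' of this corollary is the one-line remark that Proposition \ref{log_convex_first} applied to the four log-convex functions $f_1,\dots,f_4$ from \cite{saboam} yields the four inequalities, which is precisely your argument including the endpoint evaluations. Your additional remarks on positivity and on the trace bookkeeping for part (4) are sensible care the paper does not spell out, but they do not change the route.
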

These are operator versions, invoking unitarily invariant norms. In the next part of the paper, we present some operator results without using any norm.

Refinements of log-convex-type inequalities do not stop here! In \cite{saboam} it is proved that when $f:[0,1]\to\mathbb{R}^+$ is log-convex, we have for $N\in\mathbb{N}$,
\begin{equation}\label{ineq_for_logconv_from_oam}
f(\nu)\leq f\left(\frac{[2^{N}\nu]}{2^{N}}\right)^{[2^{N}\nu]+1-2^{N}\nu}f\left(\frac{[2^{N}\nu]+1}{2^{N}}\right)^{2^{N}\nu-[2^{N}\nu]},
\end{equation}
where by convention $f(x)=1$ when $x>1$. Observe that this is needed when $\nu=1$, because then $f\left(\frac{[2^{N}\nu]+1}{2^{N}}\right)=f\left(1+\frac{1}{2^{N}}\right).$ But this causes no problem because when $\nu=1$ the above inequality becomes an equality, with this convention.

Denoting the right hand side of (\ref{ineq_for_logconv_from_oam}) by $g_N(\nu),$ it is shown in the same paper that $g_{N+1}\leq g_{N}$ and that $g_N\to f$ as $N\to\infty$. This observation can be used to give further refinements of log-convex-type inequalities. For the next result, we use the notations $\alpha_N(\nu)=[2^{N}\nu]+1-2^{N}\nu, \beta_N(\nu)=\min\{\alpha_N(\nu),1-\alpha_N(\nu)\}, x_N(\nu)= \frac{[2^{N}\nu]}{2^{N}}, y_n(\nu)=\frac{[2^{N}\nu]+1}{2^{N}}$ and
$$K_N(f,\nu):=K\left(\sqrt[2^N]{\frac{f([2^{N}\nu]/2^N)}{f(([2^{N}\nu]+1)/2^N)}},2\right)^{\beta_N(\nu)}.$$
\begin{theorem}\label{further_ref_for_logconvex}
Let $f:[0,1]\to\mathbb{R}^+$ be log-convex and let $N\in\mathbb{N}$. Then
\begin{eqnarray*}
&&\hspace{-1cm}K_N(f,\nu)f(\nu)+S_N\left(\alpha_N(\nu);f(x_N(\nu)),f(y_N(\nu))\right)\\
&\leq& K_N(f,\nu)f\left(x_N(\nu)\right)^{\alpha_N(\nu)}f\left(y_N(\nu)\right)^{1-\alpha_N(\nu)}+
S_N\left(\alpha_N(\nu);f\left(x_N(\nu)\right),f\left(y_N(\nu)\right)\right)\\
&\leq&\alpha_N(\nu)f(x_N(\nu))+(1-\alpha_N(\nu))f(y_N(\nu))\\
&\leq&\nu f(1)+(1-\nu)f(0).
\end{eqnarray*}
\end{theorem}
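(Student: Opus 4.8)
The plan is to read the claimed chain of three inequalities and match each link to a tool already available in the excerpt. The middle term of the chain is $K_N(f,\nu)f(x_N(\nu))^{\alpha_N(\nu)}f(y_N(\nu))^{1-\alpha_N(\nu)}+S_N(\alpha_N(\nu);f(x_N(\nu)),f(y_N(\nu)))$, so everything is built around the two quantities $u:=f(x_N(\nu))=f([2^N\nu]/2^N)$ and $v:=f(y_N(\nu))=f(([2^N\nu]+1)/2^N)$ together with the weight $\alpha_N(\nu)=[2^N\nu]+1-2^N\nu\in[0,1]$.

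\emph{First link.} I would apply inequality (\ref{ineq_for_logconv_from_oam}), which with the current notation reads exactly $f(\nu)\le u^{\alpha_N(\nu)}v^{1-\alpha_N(\nu)}=g_N(\nu)$. Multiplying both sides by the nonnegative constant $K_N(f,\nu)$ and adding the common term $S_N(\alpha_N(\nu);u,v)$ to both sides gives the first inequality immediately; this link uses nothing beyond (\ref{ineq_for_logconv_from_oam}) and monotonicity of multiplication by a positive scalar. (One should note, as the excerpt already does, the convention $f(x)=1$ for $x>1$, which is only relevant when $\nu=1$, where the whole chain degenerates to equalities.)

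\emph{Second link.} Here I would invoke the Kantorovich-refined numerical Young inequality (\ref{our_ref_kanto}) with the substitution $a\mapsto u$, $b\mapsto v$, and parameter $t\mapsto\alpha_N(\nu)$. By definition $K_N(f,\nu)=K\big(\sqrt[2^N]{u/v},2\big)^{\beta_N(\nu)}$ and $\beta_N(\nu)=\min\{\alpha_N(\nu),1-\alpha_N(\nu)\}$ is precisely the exponent $\beta_N(t)$ appearing in (\ref{our_ref_kanto}) when $t=\alpha_N(\nu)$; similarly the refining sum $S_N(\alpha_N(\nu);u,v)$ is exactly the $S_N$ term in (\ref{our_ref_kanto}). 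Hence (\ref{our_ref_kanto}) yields $K_N(f,\nu)\,u^{\alpha_N(\nu)}v^{1-\alpha_N(\nu)}+S_N(\alpha_N(\nu);u,v)\le \alpha_N(\nu)u+(1-\alpha_N(\nu))v$, which is the second inequality.

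\emph{Third link.} Finally I would use the monotone refinement property stated just before the theorem, namely $g_{N+1}\le g_N$ with $g_N\to f$; iterated from $N$ down to $0$ this gives $g_N(\nu)\le g_0(\nu)$, and $g_0(\nu)$ is exactly $f(0)^{1-\nu}f(1)^{\nu}$, so by log-convexity applied at the endpoints (or directly by $g_N\le g_0\le$ the linear interpolant) one gets $\alpha_N(\nu)u+(1-\alpha_N(\nu))v=g_N(\nu)$ in the \emph{convex-combination} form; then the claim $\alpha_N(\nu)f(x_N(\nu))+(1-\alpha_N(\nu))f(y_N(\nu))\le \nu f(1)+(1-\nu)f(0)$ follows because the left side is a convex combination of values $f$ at two points of $[0,1]$ whose own convex combination $\alpha_N(\nu)x_N(\nu)+(1-\alpha_N(\nu))y_N(\nu)=\nu$, so convexity of the \emph{linear upper bound} $t\mapsto tf(1)+(1-t)f(0)$ together with log-convexity $f\le$ that bound closes it. The main obstacle, and the only point requiring care, is this last step: one must verify that $\alpha_N(\nu)x_N(\nu)+(1-\alpha_N(\nu))y_N(\nu)=\nu$ (a direct computation from the definitions of $\alpha_N,x_N,y_N$) and that both $x_N(\nu),y_N(\nu)\in[0,1]$ so that log-convexity of $f$ on $[0,1]$ legitimately gives $f(x_N(\nu))\le x_N(\nu)f(1)+(1-x_N(\nu))f(0)$ and likewise for $y_N(\nu)$, after which taking the $\alpha_N(\nu)$-combination of these two linear bounds and using the barycenter identity produces exactly $\nu f(1)+(1-\nu)f(0)$. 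Everything else is bookkeeping of notation.
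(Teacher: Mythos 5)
Your first and third links are correct and coincide with the paper's own argument: the first inequality is just (\ref{ineq_for_logconv_from_oam}) multiplied by $K_N(f,\nu)\ge 0$ with the common $S_N$ term added, and the last inequality is exactly the paper's computation, using $f(\lambda)\le \lambda f(1)+(1-\lambda)f(0)$ (log-convexity plus weighted AM--GM) at $\lambda=x_N(\nu)$ and $\lambda=y_N(\nu)$ together with the barycenter identity $\alpha_N(\nu)x_N(\nu)+(1-\alpha_N(\nu))y_N(\nu)=\nu$.

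The gap is in your second link. You assert that when (\ref{our_ref_kanto}) is applied with weight $t=\alpha_N(\nu)$, its Kantorovich exponent ``is precisely'' $\beta_N(\nu)=\min\{\alpha_N(\nu),1-\alpha_N(\nu)\}$. It is not: the exponent that (\ref{our_ref_kanto}) produces at weight $t$ is $\beta_N(t)=\min\{1+[2^{N}t]-2^{N}t,\;2^{N}t-[2^{N}t]\}$, i.e.\ $\beta_N$ evaluated at $t$, which is a different function of $t$ from $\min\{t,1-t\}$. Concretely, take $N=1$ and $\nu=1/4$: then $t=\alpha_1(1/4)=1/2$, so (\ref{our_ref_kanto}) yields the exponent $\beta_1(1/2)=\min\{1,0\}=0$, a trivial factor $K^{0}=1$, whereas $K_1(f,1/4)$ carries the exponent $1/2$. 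So the substitution does not give the claimed middle inequality. Worse, in this example the middle inequality itself reduces (with $u=f(0)$, $v=f(1/2)$, $S_1(1/2;u,v)=\tfrac12(\sqrt u-\sqrt v)^2$ and the identity $\sqrt{uv}+\tfrac12(\sqrt u-\sqrt v)^2=\tfrac{u+v}{2}$) to $K\bigl(\sqrt{u/v},2\bigr)^{1/2}\le 1$, which fails whenever $u\ne v$; so no argument can close this link with $K_N(f,\nu)$ as defined. For comparison, the paper's own proof is no better at this point: it justifies the second inequality by (\ref{the_first_refinement}), which contains no Kantorovich factor at all and hence cannot absorb the factor $K_N(f,\nu)\ge 1$. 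Both your link and the statement become correct if the exponent in $K_N(f,\nu)$ is read as $\beta_N(\alpha_N(\nu))$, which is exactly what (\ref{our_ref_kanto}) delivers.
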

\begin{proof}
The first inequality follows from (\ref{ineq_for_logconv_from_oam}), while the second follows from (\ref{the_first_refinement}). Thus, it remains to show the last inequality. Notice that
\begin{eqnarray*}
&&\hspace{-2cm}\alpha_N(\nu)f(x_N(\nu))+(1-\alpha_N(\nu))f(y_N(\nu))\\
&=&\alpha_N(\nu)f\left(x_N(\nu)\times 1+(1-x_N(\nu))\times 0\right)+\\
&&+(1-\alpha_N(\nu))f(y_N(\nu)\times 1+(1-y_N(\nu))\times 0)\\
&\leq&\alpha_N(\nu)\left\{x_N(\nu)f(1)+(1-x_N(\nu))f(0)\right\}+\\
&&+(1-\alpha_N(\nu))\left\{y_N(\nu)f(1)+(1-y_N(\nu))f(0)\right\}\\
&=&\nu f(1)+(1-\nu)f(0),
\end{eqnarray*}
where the last line is an immediate simplification of the given quantities.
\end{proof}

\section{Refined and Reversed Versions for Operators}
Now we state our main results about matrices.

To simplify the statement of the next result, we use the notation $\alpha_j(\nu)=\frac{k_j(\nu)}{2^{j-1}}.$ This result refines the corresponding inequalities in \cite{kittanehmanasreh} and \cite{zhao}.
\begin{theorem}
Let $A,B\in\mathbb{M}_n^{++}$, $N\in\mathbb{N}$ and $0\leq\nu\leq 1.$ Then
\begin{eqnarray*}
A\#_{\nu}B&+&\sum_{j=1}^{N}s_{j}(\nu)\left(A\#_{\alpha_j(\nu)}B+A\#_{2^{1-j}+\alpha_j(\nu)}B-2A\#_{2^{-j}+\alpha_j(\nu)}B\right)\leq A\nabla_{\nu}B.
\end{eqnarray*}
\end{theorem}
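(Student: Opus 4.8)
The plan is to reduce the operator statement to the scalar refinement (\ref{the_first_refinement}) by the standard congruence trick used throughout the operator theory of means. First I would set $X=A^{-\frac12}BA^{-\frac12}$, which lies in $\mathbb{M}_n^{++}$ because $A,B\in\mathbb{M}_n^{++}$. By the very definition of the weighted geometric mean, $A\#_{\gamma}B=A^{\frac12}X^{\gamma}A^{\frac12}$ for every real $\gamma$, so all the terms $A\#_{\alpha_j(\nu)}B$, $A\#_{2^{1-j}+\alpha_j(\nu)}B$, $A\#_{2^{-j}+\alpha_j(\nu)}B$ are well defined even when the index exceeds $1$; moreover $A\nabla_{\nu}B=A^{\frac12}\big((1-\nu)I+\nu X\big)A^{\frac12}$. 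Since the map $Y\mapsto A^{\frac12}YA^{\frac12}$ preserves the order $\le$ on Hermitian matrices (congruence by the invertible positive matrix $A^{\frac12}$), the asserted inequality is equivalent to
$$X^{\nu}+\sum_{j=1}^{N}s_j(\nu)\Big(X^{\alpha_j(\nu)}+X^{2^{1-j}+\alpha_j(\nu)}-2X^{2^{-j}+\alpha_j(\nu)}\Big)\le (1-\nu)I+\nu X.$$

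Both sides are continuous real-valued functions of the single positive definite matrix $X$, so by the spectral theorem it suffices to verify the corresponding scalar inequality at every eigenvalue $x>0$ of $X$, namely
$$x^{\nu}+\sum_{j=1}^{N}s_j(\nu)\Big(x^{\alpha_j(\nu)}+x^{2^{1-j}+\alpha_j(\nu)}-2x^{2^{-j}+\alpha_j(\nu)}\Big)\le (1-\nu)+\nu x.$$
Next I would recognize the left-hand side as $x^{\nu}+S_N(\nu;x,1)$. Indeed, recalling $\alpha_j(\nu)=k_j(\nu)/2^{j-1}$ and substituting $a=x$, $b=1$ in (\ref{SN_definition}), the $j$-th summand of $S_N(\nu;x,1)$ becomes
$$s_j(\nu)\Big(x^{k_j(\nu)/2^{j}}-x^{(k_j(\nu)+1)/2^{j}}\Big)^2=s_j(\nu)\Big(x^{\alpha_j(\nu)}-2x^{\alpha_j(\nu)+2^{-j}}+x^{\alpha_j(\nu)+2^{1-j}}\Big),$$
which is exactly the $j$-th term above. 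Hence the scalar inequality is precisely (\ref{the_first_refinement}) with $(a,b)=(x,1)$ — equivalently, using the degree-one homogeneity of both sides of (\ref{the_first_refinement}), its special case $b=1$ — and applying it to each eigenvalue of $X$ finishes the proof.

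There is no genuinely hard inequality step here: all the analytic content already resides in (\ref{the_first_refinement}). The only point that requires care is the bookkeeping of exponents, i.e. checking that $k_j(\nu)/2^{j}=\alpha_j(\nu)/2$ and $(k_j(\nu)+1)/2^{j}=\alpha_j(\nu)/2+2^{-j}$, so that squaring the scalar bracket appearing in (\ref{SN_definition}) reproduces precisely the three-term combination $X^{\alpha_j}+X^{2^{1-j}+\alpha_j}-2X^{2^{-j}+\alpha_j}$ in the statement. One should also note explicitly that the congruence and functional-calculus steps are legitimate because $A^{\frac12}$ is invertible and $X$ is positive definite, so every fractional power involved is the honest continuous functional calculus of $X$ on its (positive) spectrum.
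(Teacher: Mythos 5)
Your proposal is correct and follows essentially the same route as the paper: substitute $b=1$ in (\ref{the_first_refinement}), expand the squared terms of $S_N(\nu;a,1)$ to get the three-power combination, apply the resulting scalar inequality to $X=A^{-\frac12}BA^{-\frac12}$ via functional calculus, and conjugate by $A^{\frac12}$. The exponent bookkeeping you flag ($k_j(\nu)/2^{j-1}=\alpha_j(\nu)$, middle term $\alpha_j(\nu)+2^{-j}$, last term $\alpha_j(\nu)+2^{1-j}$) checks out exactly as in the paper's displayed identity.
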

\begin{proof}
Let $b=1$ in (\ref{the_first_refinement}) and expand the summand to get
\begin{eqnarray}\label{needed_for_operator}
a^{\nu}+\sum_{j=1}^{N}s_j(\nu)\left(\sqrt[2^{j-1}]{a^{k_j(\nu)}}+\sqrt[2^{j-1}]{a^{1+k_j(\nu)}}-2\sqrt[2^{j}]{a^{2k_j(\nu)+1}}\right)\leq \nu a+(1-\nu).
\end{eqnarray}
Now let $X=A^{-\frac{1}{2}}BA^{-\frac{1}{2}}.$ Then $X\in \mathbb{M}_n^{++}$ and ${\text{sp}}(X)\subset (0,\infty)$. By functional calculus, we have
\begin{eqnarray*}
X^{\nu}+\sum_{j=1}^{N}s_j(\nu)\left(\sqrt[2^{j-1}]{X^{k_j(\nu)}}+\sqrt[2^{j-1}]{X^{1+k_j(\nu)}}-2\sqrt[2^{j}]{X^{2k_j(\nu)+1}}\right)\leq \nu X+(1-\nu)I.
\end{eqnarray*}
Conjugating both sides of this inequality by $A^{\frac{1}{2}}$ implies the result.
\end{proof}
For the next result, let $\beta_j(\nu)=2^{-j}k_j(2\nu)$ and $\gamma_j(\nu)=2^{1-j}k_j(2-2\nu).$ Applying Theorem \ref{our_first_reverse} and following similar steps as above give the following reversed version.
\begin{theorem}
Let $A,B\in\mathbb{M}_n^{++}$ and $N\in\mathbb{N}$. If $0\leq \nu\leq\frac{1}{2},$ then
\begin{eqnarray*}
A\nabla_{\nu}B&+&\sum_{j=1}^{N}s_j(2\nu)\left(A\#_{1-\beta_j(\nu)}B+A\#_{1+2^{-j}-\beta_j(\nu)}B-2A\#_{1-2^{-j-1}-\beta_j(\nu)}B\right)\\
&\leq&A\#_{\nu}B+2(1-\nu)\left(A\nabla B-A\#B\right).
\end{eqnarray*}
On the other hand, if $\frac{1}{2}\leq \nu\leq 1$, then
\begin{eqnarray*}
A\nabla_{\nu}B&+&\sum_{j=1}^{N}s_j(2-2\nu)\left(A\#_{\gamma_j(\nu)}B+A\#_{\gamma_j(\nu)+2^{1-j}}B-2A\#_{\gamma_j(\nu)+2^{-j}}B\right)\\
&\leq&A\#_{\nu}B+2\nu\left(A\nabla B-A\#B\right).
\end{eqnarray*}

\end{theorem}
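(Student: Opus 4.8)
The plan is to obtain both inequalities from the scalar refined reverse of Theorem~\ref{our_first_reverse} by exactly the functional-calculus-and-congruence mechanism already used for the two operator theorems above; the only genuinely new work is matching the exponents of the geometric means.

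First I would specialise Theorem~\ref{our_first_reverse} to $b=1$. Since that inequality holds for all $a,b>0$, for every $x>0$ we have
$$\nu x+(1-\nu)+S_N(2\nu;\sqrt{x},x)\le x^{\nu}+(1-\nu)(\sqrt{x}-1)^2,\qquad 0\le\nu\le\tfrac{1}{2},$$
and
$$\nu x+(1-\nu)+S_N(2-2\nu;\sqrt{x},1)\le x^{\nu}+\nu(\sqrt{x}-1)^2,\qquad \tfrac{1}{2}\le\nu\le1 .$$
Next I would expand both sides into real powers of $x$. On the right, $(\sqrt{x}-1)^2=x-2\sqrt{x}+1$. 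On the left, by (\ref{SN_definition}) each summand of $S_N$ has the shape $s_j(\cdot)\bigl(x^{p_j}-x^{q_j}\bigr)^2=s_j(\cdot)\bigl(x^{2p_j}-2x^{p_j+q_j}+x^{2q_j}\bigr)$, where $p_j$ and $q_j$ are obtained by collapsing the two nested $2^j$-th roots after substituting the arguments $\sqrt{x},x$ (resp.\ $\sqrt{x},1$) and using $k_j(2\nu)=[2^j\nu]$, resp.\ $k_j(2-2\nu)=[2^j(1-\nu)]$. A direct computation then shows that $2p_j$, $p_j+q_j$ and $2q_j$ are precisely the three exponents recorded in the $j$-th summand of the statement, written through $\beta_j(\nu)=2^{-j}k_j(2\nu)$ in the first case and through $\gamma_j(\nu)=2^{1-j}k_j(2-2\nu)$ in the second.

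Finally I would put $X=A^{-\frac{1}{2}}BA^{-\frac{1}{2}}\in\mathbb{M}_n^{++}$, so that ${\rm sp}(X)\subset(0,\infty)$, and feed the scalar inequality (which is valid pointwise on $(0,\infty)$) into the continuous functional calculus. Conjugating by $A^{\frac{1}{2}}$, which is linear and order-preserving, and using $A^{\frac{1}{2}}X^{t}A^{\frac{1}{2}}=A\#_{t}B$ for all $t\in\R$, $A^{\frac{1}{2}}XA^{\frac{1}{2}}=B$ and $A^{\frac{1}{2}}A^{\frac{1}{2}}=A$, one turns the left-hand sides into $\nu B+(1-\nu)A=A\nabla_{\nu}B$ plus the claimed combination of geometric means, and the right-hand sides into $A\#_{\nu}B$ together with $(1-\nu)\,A^{\frac{1}{2}}(X-2X^{\frac{1}{2}}+I)A^{\frac{1}{2}}=(1-\nu)\bigl(B-2(A\#B)+A\bigr)=2(1-\nu)(A\nabla B-A\#B)$ (and the $\nu$-analogue in the case $\tfrac{1}{2}\le\nu\le1$). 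This is exactly the assertion of the theorem. I expect the $S_N$-expansion of the second step to be the main obstacle: one must patiently unwind the nested roots in (\ref{SN_definition}) together with the floor functions $k_j(2\nu)$ and $k_j(2-2\nu)$ to confirm that squaring out each radical pair produces exactly the exponents $\gamma_j(\nu)$, $\gamma_j(\nu)+2^{1-j}$, $\gamma_j(\nu)+2^{-j}$ (and their $\beta_j$-analogues). Everything else — the order-invariance of congruence, the functional calculus, and the elementary identity $A\nabla B-A\#B=\tfrac{1}{2}(A+B)-A\#B$ — is routine and mirrors the proof of the preceding non-reversed operator theorem.
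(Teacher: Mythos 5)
Your method is exactly the paper's: the paper gives no written proof beyond ``Applying Theorem \ref{our_first_reverse} and following similar steps as above,'' i.e.\ set $b=1$ in Theorem \ref{our_first_reverse}, expand $S_N$ into powers of $x$, and pass to operators via functional calculus on $X=A^{-\frac12}BA^{-\frac12}$ followed by congruence with $A^{\frac12}$; your handling of the $(\sqrt a-\sqrt b)^2$ term and of $A\nabla B-A\# B$ is correct.

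The one real issue is the step you assert but do not carry out. The ``direct computation'' does \emph{not} return the exponents printed in the statement. For $0\le\nu\le\frac12$, expanding $S_N(2\nu;\sqrt x,x)$ gives $p_j=\frac12-\frac{k_j(2\nu)}{2^{j+1}}$ and $q_j=p_j-2^{-j-1}$, hence the exponents $1-\beta_j(\nu)$, $1-2^{-j}-\beta_j(\nu)$ and $1-2^{-j-1}-\beta_j(\nu)$; the printed $1+2^{-j}-\beta_j(\nu)$ has the wrong sign (indeed the printed triple cannot arise from squaring a difference of two powers, since the middle exponent must be the arithmetic mean of the outer two, and $1-2^{-j-1}-\beta_j(\nu)$ is not the mean of $1-\beta_j(\nu)$ and $1+2^{-j}-\beta_j(\nu)$). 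For $\frac12\le\nu\le1$, expanding $S_N(2-2\nu;\sqrt x,1)$ gives the exponents $\frac{k_j(2-2\nu)}{2^{j}}$, $\frac{k_j(2-2\nu)}{2^{j}}+2^{-j}$ and $\frac{k_j(2-2\nu)}{2^{j}}+2^{-j-1}$, i.e.\ exactly half of the printed $\gamma_j(\nu)$, $\gamma_j(\nu)+2^{1-j}$, $\gamma_j(\nu)+2^{-j}$. So your argument is sound and proves a correct inequality, but it proves the theorem with these corrected indices rather than the indices as printed; you must actually perform the exponent bookkeeping and record what it yields instead of asserting agreement with the statement.
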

We remark that these operator versions refine those in \cite{zhao}, where only the first two refining terms have been found there.\\

On the other hand, Theorem \ref{theorem_reverse_mojtaba} implies the following refined version of the corresponding result in \cite{mojtaba}.
\begin{theorem}
Let $A,B\in\mathbb{M}_n^{++}, \nu\geq 0$ and $N\in\mathbb{N}$. Then
\begin{eqnarray}
\nonumber A\nabla_{-\nu}B+\sum_{j=1}^{N}2^{j-1}\nu\left(A-2A\#_{2^{-j}}B+A\#_{2^{1-j}}B\right)\leq A\#_{-\nu}B.
\end{eqnarray}
\end{theorem}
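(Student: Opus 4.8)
The plan is to transfer the scalar inequality of Theorem~\ref{theorem_reverse_mojtaba} to operators by the same functional-calculus device used in the preceding operator theorems. First I would normalize by setting $b=1$ in the scalar statement
$$(1+\nu)a-\nu b+\nu\sum_{j=1}^{N}2^{j-1}\left(\sqrt{a}-\sqrt[2^j]{a^{2^{j-1}-1}b}\right)^2\leq a^{1+\nu}b^{-\nu},$$
obtaining for $a>0$
$$(1+\nu)a-\nu+\nu\sum_{j=1}^{N}2^{j-1}\left(\sqrt{a}-\sqrt[2^j]{a^{2^{j-1}-1}}\right)^2\leq a^{1+\nu}.$$
Expanding the square $\left(a^{1/2}-a^{(2^{j-1}-1)/2^{j}}\right)^2=a+a^{(2^{j-1}-1)/2^{j-1}}-2a^{(2^{j-1}-1/2)/2^{j-1}}$ and checking exponents: $a^{1/2}=a^{2^{-1}}$, which one wants to identify with $A\#_{2^{1-j}/2}B=A\#_{2^{-j}}$... so the key bookkeeping step is to verify that with $b=1$ the three powers of $a$ appearing are $a^{1}$, $a^{2^{1-j}}$ (from $a^{(2^{j-1}-1)/2^{j-1}}=a^{1-2^{1-j}}$, wait) — precisely, $\tfrac{2^{j-1}-1}{2^{j-1}}=1-2^{1-j}$ and $\tfrac{2^{j-1}-1/2}{2^{j-1}}=1-2^{-j}$, so the summand becomes $a^{1}+a^{1-2^{1-j}}-2a^{1-2^{-j}}$. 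I would then match these against the operator means: since $A\#_{t}B=A^{1/2}(A^{-1/2}BA^{-1/2})^{t}A^{1/2}$, conjugating $a^{t}$ by $A^{1/2}$ after substituting $a\mapsto X:=A^{-1/2}BA^{-1/2}$ yields $A\#_{t}B$; in particular $a\mapsto A\#_1 B=B$ is wrong for the "$a$" term — here one must be careful, because with $b=1$ the plain term $a$ corresponds under conjugation to $A\#_1 B$, not $A$. The resolution is that the paper's statement already writes $A-2A\#_{2^{-j}}B+A\#_{2^{1-j}}B$, which suggests the intended substitution keeps $a$ as $a$ (i.e. the summand's "$A$") — so I would instead expand in the \emph{other} normalization.

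So the cleaner route: do \emph{not} set $b=1$ but rather substitute directly. From Theorem~\ref{theorem_reverse_mojtaba} with general $a,b>0$, divide the whole inequality by $b$ and set $t=a/b$:
$$(1+\nu)t-\nu+\nu\sum_{j=1}^{N}2^{j-1}\left(t^{1/2}-t^{(2^{j-1}-1)/2^{j}}\right)^2\leq t^{1+\nu},$$
which after expanding the square is
$$(1+\nu)t-\nu+\nu\sum_{j=1}^{N}2^{j-1}\left(t+t^{1-2^{1-j}}-2t^{1-2^{-j}}\right)\leq t^{1+\nu}.$$
Hmm, this still has the bare $t$ inside the sum, not a constant. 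Reconciling with the target $A-2A\#_{2^{-j}}B+A\#_{2^{1-j}}B$: under the functional calculus $X=A^{-1/2}BA^{-1/2}$, conjugation by $A^{1/2}$ sends $X^{s}\mapsto A\#_{s}B$, so $X^{0}=I\mapsto A$, $X^{2^{-j}}\mapsto A\#_{2^{-j}}B$, $X^{2^{1-j}}\mapsto A\#_{2^{1-j}}B$. Therefore I actually want the square written as $\left(X^{0}-X^{2^{-j}}\right)^2=X^{0}+X^{2^{1-j}}-2X^{2^{-j}}$, i.e.\ the scalar square should be $\left(1-\sqrt[2^j]{b/a\cdot\text{(stuff)}}\right)^2$ after the right normalization. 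This means the correct scalar input is Theorem~\ref{theorem_reverse_mojtaba} with $a$ and $b$ \emph{swapped} in a controlled way, or equivalently one sets $a=1$ there: $(1+\nu)-\nu b+\nu\sum 2^{j-1}(1-b^{2^{-j}})^2\le b^{-\nu}$, i.e. $-\nu b+(1+\nu)+\nu\sum 2^{j-1}(1+b^{2^{1-j}}-2b^{2^{-j}})\le b^{-\nu}$. That is exactly $A\nabla_{-\nu}B+\sum 2^{j-1}\nu(A-2A\#_{2^{-j}}B+A\#_{2^{1-j}}B)\le A\#_{-\nu}B$ under $b\mapsto X$, since $A\nabla_{-\nu}B=(1+\nu)A-\nu B\mapsto (1+\nu)I-\nu X$ after conjugation, and $A\#_{-\nu}B\mapsto$ ... wait, $b^{-\nu}\mapsto X^{-\nu}\mapsto A\#_{-\nu}B$.

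Hence the proof I would write is short: (i) take the scalar inequality of Theorem~\ref{theorem_reverse_mojtaba}, set $a=1$, to get $(1+\nu)-\nu b+\nu\sum_{j=1}^{N}2^{j-1}(1-\sqrt[2^j]{b})^2\le b^{-\nu}$ for all $b>0$; (ii) expand $(1-b^{2^{-j}})^2=1-2b^{2^{-j}}+b^{2^{1-j}}$; (iii) put $X=A^{-1/2}BA^{-1/2}\in\mathbb{M}_n^{++}$, whose spectrum lies in $(0,\infty)$, and apply the scalar inequality pointwise on $\mathrm{sp}(X)$ via the continuous functional calculus, yielding
$$(1+\nu)I-\nu X+\nu\sum_{j=1}^{N}2^{j-1}\left(I-2X^{2^{-j}}+X^{2^{1-j}}\right)\le X^{-\nu};$$
(iv) conjugate both sides by $A^{1/2}$ and use $A^{1/2}X^{s}A^{1/2}=A\#_{s}B$ together with $A^{1/2}\big((1+\nu)I-\nu X\big)A^{1/2}=(1+\nu)A-\nu B=A\nabla_{-\nu}B$ to land on the claimed operator inequality. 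The only genuinely delicate point is the exponent bookkeeping in step~(ii)--(iii) — confirming that $(1-b^{2^{-j}})^{2}$ produces precisely the exponents $0$, $2^{-j}$, $2^{1-j}$ that match $A$, $A\#_{2^{-j}}B$, $A\#_{2^{1-j}}B$ — and making sure the substitution $a=1$ in Theorem~\ref{theorem_reverse_mojtaba} is legitimate, which it is since that theorem holds for all $a,b>0$. The passage from scalars to operators via functional calculus and conjugation is standard (it is exactly the mechanism used in the two operator theorems immediately preceding) and raises no positivity subtleties because $A\nabla_{-\nu}B$ and $A\#_{-\nu}B$ are Hermitian and the inequality is between Hermitian operators obtained by a monotone operation from a scalar inequality on the spectrum.
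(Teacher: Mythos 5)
Your proposal is correct and follows exactly the route the paper intends (the paper states this theorem without proof, merely noting that it follows from Theorem \ref{theorem_reverse_mojtaba}): setting $a=1$ in that scalar inequality gives $(1+\nu)-\nu b+\nu\sum_{j=1}^{N}2^{j-1}\bigl(1-b^{2^{-j}}\bigr)^{2}\le b^{-\nu}$, and the functional-calculus substitution $b\mapsto X=A^{-1/2}BA^{-1/2}$ followed by conjugation with $A^{1/2}$ yields the claim, with the exponent bookkeeping $0,\,2^{-j},\,2^{1-j}$ matching $A$, $A\#_{2^{-j}}B$, $A\#_{2^{1-j}}B$ as you verified. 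Your initial detour through the $b=1$ normalization is harmless exploration; the final four-step argument is the standard mechanism used in the neighboring operator theorems and is complete.
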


Another reversed version can be obtained from Corollary \ref{reverse_after_mojtaba} as follows.
\begin{theorem}
Let $A,B\in\mathbb{M}_n^{++}, \nu\geq 0$ and $N\in\mathbb{N}$. Then
\begin{eqnarray*}
A\nabla_{-\nu}B&+&AB^{-1}A\sum_{j=1}^{N}\left(A\#_{1+\alpha_j(\nu)}B+A\#_{1+2^{1-j}+\alpha_j(\nu)}-2A\#_{1+2^{-j}+\alpha_j(\nu)}\right)\\
&\leq&A\#_{-\nu}B,
\end{eqnarray*}
where $\alpha_j(\nu)=2^{1-j}k_j(\nu).$
\end{theorem}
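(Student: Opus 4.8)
The plan is to deduce this operator inequality from the numerical inequality of Corollary~\ref{reverse_after_mojtaba}, in the same way that the theorem just above (the operator form of Theorem~\ref{theorem_reverse_mojtaba}) was deduced from its scalar ancestor. The mechanism is the usual one: rewrite \eqref{completerefinedminusfirst} as an inequality between two functions of the single positive variable $t=b/a$, apply it via the functional calculus to the positive definite operator $X=A^{-\frac12}BA^{-\frac12}$, and conjugate by $A^{\frac12}$. Then each scalar power $t^{s}$ becomes the operator mean $A\#_{s}B$, the linear part $(1+\nu)a-\nu b$ becomes $A\nabla_{-\nu}B$, and the right-hand side $t^{-\nu}$ becomes $A\#_{-\nu}B$.

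First I would simplify the refining term in \eqref{completerefinedminusfirst}. A look at \eqref{SN_definition} shows that each summand of $S_N$ is homogeneous of degree one in its last two slots, so $\tfrac1b S_N(1-\nu;ab,b^2)=S_N(1-\nu;a,b)$, and the symmetry $S_N(\nu;a,b)=S_N(1-\nu;b,a)$ of Proposition~1.1 turns this into $S_N(\nu;b,a)$. Expanding the square in its $j$-th summand and writing each $a,b$-monomial as $a$ times a power of $t=b/a$, the three exponents $\tfrac{k_j(\nu)}{2^{j-1}}$, $\tfrac{2k_j(\nu)+1}{2^{j}}$, $\tfrac{k_j(\nu)+1}{2^{j-1}}$ appear, which are exactly $\alpha_j(\nu)$, $\alpha_j(\nu)+2^{-j}$, $\alpha_j(\nu)+2^{1-j}$ with $\alpha_j(\nu)=2^{1-j}k_j(\nu)$. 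Dividing \eqref{completerefinedminusfirst} through by $a$ then produces a scalar inequality of the shape
\[
(1+\nu)-\nu t+\sum_{j=1}^{N}s_j(\nu)\Big(t^{\alpha_j(\nu)}+t^{2^{1-j}+\alpha_j(\nu)}-2\,t^{2^{-j}+\alpha_j(\nu)}\Big)\le t^{-\nu},\qquad t>0 .
\]

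Now I would put $X=A^{-\frac12}BA^{-\frac12}\in\mathbb{M}_n^{++}$, whose spectrum lies in $(0,\infty)$, apply the displayed scalar inequality to $X$ by functional calculus, and conjugate both sides by $A^{\frac12}$; using $A^{\frac12}IA^{\frac12}=A$, $A^{\frac12}XA^{\frac12}=B$, $A^{\frac12}X^{s}A^{\frac12}=A\#_{s}B$ and $A^{\frac12}X^{-\nu}A^{\frac12}=A\#_{-\nu}B$, the left side becomes $A\nabla_{-\nu}B$ together with the claimed sum of operator means, and the right side becomes $A\#_{-\nu}B$. (Alternatively, one may skip the homogeneity reduction and instead conjugate the unsimplified scalar inequality by $B^{\frac12}$ with $Y=B^{-\frac12}AB^{-\frac12}$, using $B\#_{s}A=A\#_{1-s}B$; the exponent bookkeeping is the same.) The hard part will be exactly the computation of the middle paragraph: massaging $\tfrac1b S_N(1-\nu;ab,b^2)$ into a sum of powers of $t^{1/2^{j}}$ with precisely the exponents $\alpha_j(\nu)$, $\alpha_j(\nu)+2^{-j}$, $\alpha_j(\nu)+2^{1-j}$. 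This leans on the degree-one homogeneity of $S_N$, the symmetry from Proposition~1.1, the integrality of $k_j(\nu)=[2^{j-1}\nu]$ (so that no fractional radicands survive), and the conventions that kill the $S_N$-terms at $\nu\in\{0,1\}$. Finally one should note that \eqref{completerefinedminusfirst} is stated only for $0\le\nu\le1$, so to cover every $\nu\ge0$ one runs the identical argument on the corollary immediately following Corollary~\ref{reverse_after_mojtaba} (valid for all $\nu>0$), or patches the remaining range with the reversed Young inequality \eqref{minusreverse}.
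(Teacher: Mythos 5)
Your proposal is correct and follows exactly the route the paper intends: the paper offers no written proof beyond the remark that the result ``can be obtained from Corollary \ref{reverse_after_mojtaba},'' and your reduction --- degree-one homogeneity of $S_N$ to remove the factor $\frac{1}{b}$, the symmetry $S_N(1-\nu;a,b)=S_N(\nu;b,a)$ from Proposition 1.1, expansion of the squares into powers of $t=b/a$ with exponents $\alpha_j(\nu)$, $\alpha_j(\nu)+2^{-j}$, $\alpha_j(\nu)+2^{1-j}$, and then functional calculus applied to $X=A^{-\frac12}BA^{-\frac12}$ followed by conjugation with $A^{\frac12}$ --- is precisely that argument carried out in detail. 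You also rightly flag that Corollary \ref{reverse_after_mojtaba} is proved only for $0\le\nu\le1$ while the theorem asserts $\nu\ge 0$ (a gap in the paper's own statement), and your worked-out form with the $s_j(\nu)$ coefficients is in fact the correct version of the displayed inequality, which the paper states with those coefficients inadvertently dropped.
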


Applying Theorem \ref{refined_square} implies the following operator versions.
\begin{theorem}
Let $A,B\in\mathbb{M}_n^{++}$ and $N\geq 2$. If $0\leq\nu\leq \frac{1}{2}$, then
\begin{eqnarray*}
A\#_{2-2\nu}B&+&\sum_{j=2}^{N}s_{j}(\nu)\left(A\#_{\alpha_j(\nu)}B+A\#_{2^{1-j}+\alpha_j(\nu)}B-2A\#_{2^{-j}+\alpha_j(\nu)}B\right)\\
&\leq&\left( BA^{-1}B\right)\nabla_{2\nu}B.
\end{eqnarray*}
On the other hand, if $\frac{1}{2}\leq\nu\leq 1,$ then
\begin{eqnarray*}
A\#_{2-2\nu}B&+&\sum_{j=2}^{N}s_{j}(\nu)\left(A\#_{\alpha_j(\nu)}B+A\#_{2^{1-j}+\alpha_j(\nu)}B-2A\#_{2^{-j}+\alpha_j(\nu)}B\right)\\
&\leq& A\nabla_{2-2\nu}B.
\end{eqnarray*}
\end{theorem}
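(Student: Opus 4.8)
The plan is to derive both operator inequalities from the scalar squared inequality of Theorem~\ref{refined_square} by the same ``functional calculus plus congruence'' device already used in the proof of the first operator theorem of this section. Since \eqref{squared_inequality_first} holds for all $a,b>0$, I would first specialize it by setting $a=1$ and keeping $b$ free. Expanding the term $s_1^{2}(\nu)(1-b)^{2}$ and each squared radical $\bigl(\sqrt[2^{j-1}]{b^{2^{j-1}-k_j(\nu)}}-\sqrt[2^{j-1}]{b^{2^{j-1}-k_j(\nu)-1}}\bigr)^{2}$ turns this into an inequality $\sum_i c_i\,b^{p_i}\le\sum_i d_i\,b^{q_i}$ with real coefficients and nonnegative exponents. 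The point is that each monomial $b^{p}$ is the scalar symbol of the operator mean $A\#_{p}B$; in particular the constant term corresponds to $A$, the linear term $b$ to $B$, and $b^{2}$ to $BA^{-1}B$.

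Next I would put $X:=A^{-1/2}BA^{-1/2}\in\mathbb{M}_n^{++}$, note that $\mathrm{sp}(X)\subset(0,\infty)$, apply the order-preserving functional calculus to evaluate the scalar inequality at $X$, and then conjugate both sides by $A^{1/2}$, using $A^{1/2}X^{p}A^{1/2}=A\#_{p}B$, $A^{1/2}XA^{1/2}=B$, $A^{1/2}X^{2}A^{1/2}=BA^{-1}B$, and $A^{1/2}A^{1/2}=A$. This produces an operator inequality whose left side is $A\#_{2-2\nu}B+s_1^{2}(\nu)\bigl(A-2B+BA^{-1}B\bigr)$ plus the sum over $j=2,\dots,N$ of $\#$-differences appearing in the statement, and whose right side is the operator realization of $(\nu a+(1-\nu)b)^{2}$ at $a=1$, $b=X$, namely $\nu^{2}A+2\nu(1-\nu)B+(1-\nu)^{2}BA^{-1}B$. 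It then remains only to absorb the single nonnegative refining term $s_1^{2}(\nu)(A-2B+BA^{-1}B)=s_1^{2}(\nu)A^{1/2}(I-X)^{2}A^{1/2}$ into the right side. Using $s_1(\nu)=\min\{\nu,1-\nu\}$ and the elementary identity that already closes the proof of Theorem~\ref{refined_square}, the difference $\nu^{2}A+2\nu(1-\nu)B+(1-\nu)^{2}BA^{-1}B-s_1^{2}(\nu)(A-2B+BA^{-1}B)$ collapses: for $0\le\nu\le\tfrac12$, where $s_1(\nu)=\nu$, it equals $(1-2\nu)BA^{-1}B+2\nu B=(BA^{-1}B)\nabla_{2\nu}B$, while for $\tfrac12\le\nu\le1$, where $s_1(\nu)=1-\nu$, it equals $(2\nu-1)A+(2-2\nu)B=A\nabla_{2-2\nu}B$. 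This yields the two stated inequalities and explains why the case split appears at all: it is forced solely by this last algebraic simplification.

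The step I expect to be the real obstacle is purely bookkeeping of exponents: one has to verify that, when the $j$-th squared radical of \eqref{squared_inequality_first} is expanded and run through the specialization $a=1$ and the conjugation by $A^{1/2}$, it reproduces exactly $A\#_{\alpha_j(\nu)}B+A\#_{2^{1-j}+\alpha_j(\nu)}B-2A\#_{2^{-j}+\alpha_j(\nu)}B$ with coefficient $s_j(\nu)$ (with $\alpha_j(\nu)=k_j(\nu)/2^{j-1}$), and that the scalar identity absorbing the $s_1^{2}(\nu)$ term is checked separately on $[0,\tfrac12]$ and on $[\tfrac12,1]$, since $s_1(\nu)$ switches value at $\nu=\tfrac12$. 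Everything else is an immediate consequence of Theorem~\ref{refined_square} together with the standard functional-calculus argument; no new idea beyond these is required.
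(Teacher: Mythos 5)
Your proposal is correct and follows essentially the same route as the paper's proof: the paper likewise sets $a=1$ in \eqref{squared_inequality_first}, absorbs the $s_1^2(\nu)(1-b)^2$ term into the right-hand side to rewrite it as $(1-2\nu)b^2+2\nu b$ (respectively $(2\nu-1)+(2-2\nu)b$ in the second case), and then applies functional calculus to $X=A^{-1/2}BA^{-1/2}$ followed by conjugation with $A^{1/2}$, using $A^{1/2}X^2A^{1/2}=BA^{-1}B$. The only caveat is the exponent bookkeeping you deferred: since the radicals in \eqref{squared_inequality_first} are $2^{j-1}$-th roots, specializing $a=1$ actually yields the exponents $2-2\alpha_j(\nu)$, $2-2\alpha_j(\nu)-2^{2-j}$ and $2-2\alpha_j(\nu)-2^{1-j}$ rather than the ones printed in the statement, so that verification does not close as stated (this appears to be an issue with the theorem's statement rather than with your method).
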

\begin{proof}
For $0\leq\nu\leq\frac{1}{2}$ and $a=1$, (\ref{squared_inequality_first}) can be written as
\begin{eqnarray*}
b^{2-2\nu}+\sum_{j=2}^{N}s_j(\nu)
\left(\sqrt[2^{j-1}]{b^{2^{j-1}-k_j(\nu)}}-\sqrt[2^{j-1}]{b^{2^{j-1}-k_j(\nu)-1}}\right)^2\leq (1-2\nu)b^2+2\nu b.
\end{eqnarray*}
Expanding the summand, letting $X=A^{-\frac{1}{2}}BA^{-\frac{1}{2}}$ and using functional calculus, we obtain the first inequality. Similar argument implies the second inequality.
\end{proof}
Operator refinements involving the Kantorovich constant can be obtained using the corresponding numerical versions. For example, (\ref{our_ref_kanto}) implies the following.
\begin{theorem}
Let $A,B\in\mathbb{M}_n^{++}, 0\leq \nu\leq 1$ and $N\in\mathbb{N}$. If $mI\leq A,B\leq MI$ for some $m,M>0$, then
\begin{eqnarray*}
K\left(h,2\right)^{\beta_N(\nu)}A\#_{\nu}B&+&\sum_{j=1}^{N}s_{j}(\nu)
\left(A\#_{\alpha_j(\nu)}B+A\#_{2^{1-j}+\alpha_j(\nu)}B-2A\#_{2^{-j}+\alpha_j(\nu)}B\right)\\
&\leq& A\nabla_{\nu}B,
\end{eqnarray*}
where $h=\sqrt[2^N]{\frac{M}{m}}$, $\beta_N(\nu)=\min\{1+[2^N\nu]-2^{N}\nu,2^{N}\nu-[2^{N}\nu]\}$ and $\alpha_j(\nu)=2^{1-j}k_j(\nu).$
\end{theorem}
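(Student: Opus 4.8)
The plan is to reduce the operator inequality to the scalar inequality (\ref{our_ref_kanto}) via the standard functional calculus argument, exactly as in the proof of the earlier operator versions in this section. First I would set $a=1$ and $b=t>0$ in the scalar Kantorovich refinement (\ref{our_ref_kanto}), which gives
\begin{equation*}
K\!\left(\sqrt[2^N]{t},2\right)^{\beta_N(\nu)} t^{\nu}+S_N(\nu;1,t)\leq \nu t+(1-\nu),
\end{equation*}
valid for all $t>0$; here I am using the invariance $S_N(\nu;a,b)=S_N(1-\nu;b,a)$ from the Proposition together with the explicit form of $S_N$ in (\ref{SN_definition}) to rewrite $S_N(\nu;1,t)$ as a combination of powers of $t$. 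Expanding the square in each summand of $S_N(\nu;1,t)$ exactly as in (\ref{needed_for_operator}) turns the $j$-th term into
$$s_j(\nu)\Big(t^{\,2^{1-j}k_j(\nu)}+t^{\,2^{1-j}(k_j(\nu)+1)}-2t^{\,2^{-j}(2k_j(\nu)+1)}\Big)=s_j(\nu)\Big(t^{\alpha_j(\nu)}+t^{2^{1-j}+\alpha_j(\nu)}-2t^{2^{-j}+\alpha_j(\nu)}\Big),$$
using $\alpha_j(\nu)=2^{1-j}k_j(\nu)$ and $2^{1-j}+\alpha_j(\nu)=2^{1-j}(k_j(\nu)+1)$, $2^{-j}+\alpha_j(\nu)=2^{-j}(2k_j(\nu)+1)$. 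So the scalar inequality reads
\begin{equation*}
K\!\left(\sqrt[2^N]{t},2\right)^{\beta_N(\nu)} t^{\nu}+\sum_{j=1}^{N}s_j(\nu)\Big(t^{\alpha_j(\nu)}+t^{2^{1-j}+\alpha_j(\nu)}-2t^{2^{-j}+\alpha_j(\nu)}\Big)\leq \nu t+(1-\nu).
\end{equation*}

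Next I would put $X=A^{-1/2}BA^{-1/2}\in\mathbb{M}_n^{++}$. The hypothesis $mI\leq A,B\leq MI$ gives $\frac{m}{M}I\leq X\leq\frac{M}{m}I$, so $\operatorname{sp}(X)\subset[\frac{m}{M},\frac{M}{m}]$. The Kantorovich constant $K(\cdot,2)$ is decreasing on $(0,1]$ and increasing on $[1,\infty)$ with $K(s,2)=K(1/s,2)$, hence on the interval $[\sqrt[2^N]{m/M},\sqrt[2^N]{M/m}]$ the function $s\mapsto K(s,2)^{\beta_N(\nu)}$ attains its minimum at an endpoint and that minimum is $K(\sqrt[2^N]{M/m},2)^{\beta_N(\nu)}=K(h,2)^{\beta_N(\nu)}$ with $h=\sqrt[2^N]{M/m}$ (note $\beta_N(\nu)\geq 0$). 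Therefore the scalar inequality above, applied pointwise to the eigenvalues of $X$, yields the operator inequality
\begin{equation*}
K(h,2)^{\beta_N(\nu)}X^{\nu}+\sum_{j=1}^{N}s_j(\nu)\Big(X^{\alpha_j(\nu)}+X^{2^{1-j}+\alpha_j(\nu)}-2X^{2^{-j}+\alpha_j(\nu)}\Big)\leq \nu X+(1-\nu)I,
\end{equation*}
where the replacement of $K(\sqrt[2^N]{t},2)^{\beta_N(\nu)}$ by the constant lower bound $K(h,2)^{\beta_N(\nu)}$ on the left is legitimate precisely because $X^{\nu}\geq 0$ and the constant does not exceed $K(\sqrt[2^N]{t},2)^{\beta_N(\nu)}$ for every $t\in\operatorname{sp}(X)$.

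Finally I would conjugate both sides by $A^{1/2}$. Using $A^{1/2}X^{\mu}A^{1/2}=A^{1/2}(A^{-1/2}BA^{-1/2})^{\mu}A^{1/2}=A\#_{\mu}B$ for every real $\mu$, and $A^{1/2}(\nu X+(1-\nu)I)A^{1/2}=\nu B+(1-\nu)A=A\nabla_{\nu}B$, the desired inequality follows term by term. I expect the only genuinely delicate point to be the monotonicity argument that lets one pull the variable Kantorovich factor $K(\sqrt[2^N]{t},2)^{\beta_N(\nu)}$ down to the constant $K(h,2)^{\beta_N(\nu)}$ uniformly over $\operatorname{sp}(X)$; everything else is a direct transcription of the proofs already given for the other operator theorems in this section. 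One should also record the boundary conventions ($N$ such that $[2^N\nu]$ may equal $2^N\nu$, and $\nu\in\{0,1\}$), where $s_j(\nu)$ and $\beta_N(\nu)$ degenerate and the inequality reduces to (\ref{kanto_young}) or to an identity.
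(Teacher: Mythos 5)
Your overall strategy---specialize the scalar inequality (\ref{our_ref_kanto}) at $b=1$, expand $S_N$ into powers of $t$, apply functional calculus to $X=A^{-1/2}BA^{-1/2}$, and conjugate by $A^{1/2}$---is exactly the paper's, and those steps are all fine. The gap is in the one step you yourself flag as delicate: replacing the variable factor $K(\sqrt[2^N]{t},2)^{\beta_N(\nu)}$ by the constant $K(h,2)^{\beta_N(\nu)}$. You assert that on $\left[\sqrt[2^N]{m/M},\sqrt[2^N]{M/m}\right]$ the function $s\mapsto K(s,2)$ attains its \emph{minimum} at the endpoint $h=\sqrt[2^N]{M/m}$. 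That is backwards: $K(s,2)=\frac{(s+1)^2}{4s}$ is decreasing on $(0,1]$ and increasing on $[1,\infty)$, so on an interval of the form $[1/h,h]$---which contains $1$, since $mI\le A,B\le MI$ only forces $\mathrm{sp}(X)\subset[m/M,M/m]\ni 1$---its minimum is $K(1,2)=1$, attained at the interior point $s=1$, while $K(h,2)$ is its \emph{maximum}. Hence $K(h,2)^{\beta_N(\nu)}t^{\nu}\ge K(\sqrt[2^N]{t},2)^{\beta_N(\nu)}t^{\nu}$, the inequality you need goes the wrong way, and the scalar statement with the constant factor is simply false at $t=1$ whenever $h>1$ and $\beta_N(\nu)>0$. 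The operator conclusion fails as well: take $A=B$ with $m<M$; every refining term vanishes and the claimed inequality collapses to $K(h,2)^{\beta_N(\nu)}A\le A$, i.e.\ $K(h,2)^{\beta_N(\nu)}\le 1$, which is wrong.

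You should know that the paper's own proof commits the same error: it justifies the replacement by claiming that ``$K(t,2)$ is an increasing function when $t>0$'' (a claim incompatible with $K(t)=K(1/t)$) and concludes in the same wrong direction. So your proposal is a faithful reconstruction of the published argument, but that argument does not work under the hypothesis $mI\le A,B\le MI$. The standard way to make a Kantorovich-type operator refinement legitimate is to assume the spectra are separated, e.g.\ $0<mI\le A\le m'I<M'I\le B\le MI$, so that $\mathrm{sp}(A^{-1/2}BA^{-1/2})\subset\left[M'/m',\,M/m\right]\subset(1,\infty)$; then the increasing branch of $K$ gives $K(\sqrt[2^N]{t},2)\ge K\left(\sqrt[2^N]{M'/m'},2\right)$ for every $t$ in the spectrum, and a constant can be pulled out in the correct direction.
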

\begin{proof}
Set $b=1$ in (\ref{our_ref_kanto}) to get
\begin{eqnarray*}
K(\sqrt[2^N]{a},2)^{\beta_N(\nu)}a^{\nu}+S_N(\nu;a,1)\leq \nu a+(1-\nu)1.
\end{eqnarray*}
If $a\in\left[\frac{m}{M},\frac{M}{m}\right]$, then using the facts that $K(t):=K(t,2)$ is an increasing function when $t>0$ and $K(t)=K(1/t),$ we obtain
$$K(h,2)^{\beta_N(\nu)}a^{\nu}+S_N(\nu;a,1)\leq \nu a+(1-\nu)1.$$ Now letting $X=A^{-\frac{1}{2}}BA^{-\frac{1}{2}}$, we obtain ${\text{sp}}(X)\subset \left[\frac{m}{M},\frac{M}{m}\right]$ because $mI\leq A,B\leq MI$. Then a standard functional calculus argument implies the required inequality.
\end{proof}
Observe that this theorem is a refinement of the corresponding results in \cite{zhao}.

\section{Some Hilbert-Schmidt Norm Inequalities}
In \cite{kittanehmanasreh}, it is proved that
\begin{eqnarray*}
\|A^{\nu}XB^{1-\nu}\|_2^2&+&s_1^2(\nu)\|AX-XB\|_2^2\leq\|\nu AX+(1-\nu)XB\|_2^2,
\end{eqnarray*}
when $A,B\in\mathbb{M}_n^{+}, X\in\mathbb{M}_n, 0\leq\nu\leq 1.$\\
The following theorem gives a refinement of this inequality \cite{kittanehmanasreh}.
\begin{theorem}
Let $A,B\in\mathbb{M}_n^{+}, X\in\mathbb{M}_n, 0\leq\nu\leq 1$ and $N\geq 2.$ Then
\begin{eqnarray*}
\|A^{\nu}XB^{1-\nu}\|_2^2&+&s_1^2(\nu)\|AX-XB\|_2^2\\
&+&\sum_{j=2}^{N}\left\|A^{\frac{k_j(\nu)}{2^{j-1}}}XB^{1-\frac{k_j(\nu)}{2^{j-1}}}  -A^{\frac{k_j(\nu)+1}{2^{j-1}}}XB^{1-\frac{k_j(\nu)+1}{2^{j-1}}}\right\|_2^2\\
&\leq&\|\nu AX+(1-\nu)XB\|_2^2.
\end{eqnarray*}
\end{theorem}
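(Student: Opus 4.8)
The plan is to reduce the matrix statement to the scalar inequality \eqref{squared_inequality_first} from Theorem \ref{refined_square} by means of the standard spectral-decomposition technique for the Hilbert--Schmidt norm. Since $A,B\in\mathbb{M}_n^{+}$, write $A=U\,\mathrm{diag}(\lambda_1,\dots,\lambda_n)\,U^*$ and $B=V\,\mathrm{diag}(\mu_1,\dots,\mu_n)\,V^*$ with $U,V$ unitary and $\lambda_i,\mu_j\ge 0$. Put $Y=U^*XV=(y_{ij})$. The first observation is that conjugation by unitaries and the unitary invariance of $\|\cdot\|_2$ let us replace $A$, $B$, $X$ by the diagonal matrices and $Y$ throughout; each term appearing in the statement is a Hilbert--Schmidt norm of an expression of the form $f(A)\,X\,g(B)$ for scalar functions $f,g$ (here powers), and for diagonal $A,B$ one has $\big(f(A)Yg(B)\big)_{ij}=f(\lambda_i)g(\mu_j)y_{ij}$, so $\|f(A)Xg(B)\|_2^2=\sum_{i,j}f(\lambda_i)^2g(\mu_j)^2|y_{ij}|^2$.

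Second, I would fix a pair $(i,j)$ and collect the coefficient of $|y_{ij}|^2$ in each of the three groups of terms. Writing $a=\lambda_i$ and $b=\mu_j$, the left-hand side contributes
$(a^{\nu}b^{1-\nu})^2|y_{ij}|^2$ from the first term; $s_1^2(\nu)(a-b)^2|y_{ij}|^2$ from the second (since $(AX-XB)_{ij}=(\lambda_i-\mu_j)y_{ij}$ in the diagonal picture); and from the $j$-th summand in the sum over $j=2,\dots,N$,
\[
\left(a^{\frac{k_j(\nu)}{2^{j-1}}}b^{1-\frac{k_j(\nu)}{2^{j-1}}}-a^{\frac{k_j(\nu)+1}{2^{j-1}}}b^{1-\frac{k_j(\nu)+1}{2^{j-1}}}\right)^2|y_{ij}|^2
=\left(\sqrt[2^{j-1}]{b^{2^{j-1}-k_j(\nu)}a^{k_j(\nu)}}-\sqrt[2^{j-1}]{a^{k_j(\nu)+1}b^{2^{j-1}-k_j(\nu)-1}}\right)^2|y_{ij}|^2 ,
\]
which after multiplying through by $s_j(\nu)$ is precisely $s_j(\nu)$ times the corresponding bracket in \eqref{squared_inequality_first}. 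The right-hand side contributes $(\nu a+(1-\nu)b)^2|y_{ij}|^2$. Thus the coefficient of $|y_{ij}|^2$ on the left minus that on the right is exactly the left-hand side minus the right-hand side of \eqref{squared_inequality_first} with $a=\lambda_i$, $b=\mu_j$, which is $\le 0$ by Theorem \ref{refined_square}.

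Third, I would sum the scalar inequalities over all $i,j$ with nonnegative weights $|y_{ij}|^2$; this gives the desired matrix inequality once the left- and right-hand sides are reassembled as Hilbert--Schmidt norms via the identity of the first paragraph. Two minor points need care: Theorem \ref{refined_square} is stated for $a,b>0$, so to handle zero eigenvalues one either invokes a continuity/limiting argument (perturb $A,B$ to $A+\varepsilon I,B+\varepsilon I$ and let $\varepsilon\to0^+$, noting every expression involved is continuous in the entries) or checks directly that the relevant powers extend continuously to $a=0$ or $b=0$; and one should double-check the bookkeeping $1-\frac{k_j(\nu)}{2^{j-1}}=\frac{2^{j-1}-k_j(\nu)}{2^{j-1}}$ so that the radicands match \eqref{SN_definition} after the substitution $a\mapsto a^2$, $b\mapsto b^2$ already performed inside Theorem \ref{refined_square}. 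I expect the only genuine obstacle to be this indexing/normalization check together with the $a,b=0$ boundary case; the structural part—diagonalize, read off the coefficient of each $|y_{ij}|^2$, apply the scalar theorem termwise, and resum—is routine.
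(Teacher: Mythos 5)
Your proposal follows essentially the same route as the paper's own proof: spectral decomposition of $A$ and $B$, passage to $Y=U^*XV$, expression of each Hilbert--Schmidt norm as a weighted sum of $|y_{ij}|^2$, and termwise application of the scalar inequality \eqref{squared_inequality_first}. If anything, you are more careful than the paper, which silently ignores the zero-eigenvalue boundary case and the fact that the weights $s_j(\nu)$ must accompany the summands for the reduction to the scalar theorem to go through.
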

\begin{proof}
Since $A,B\in\mathbb{M}_n^{+}$, there are unitary matrices $U$ and $V$ such that
$$A=UD_1U^{*}\;{\text{and}}\;B=VD_2V^*$$ where $D_1={\text{diag}}(\lambda_i)$ and $D_2={\text{diag}}(\mu_i)$, $\lambda_i,\mu_i\geq 0.$ Letting $Y=U^*XV$, we have
\begin{eqnarray}
\nonumber \|A^{\nu}XB^{1-\nu}\|_2^2&=&\sum_{i,j}\left(\lambda_i^{\nu}\mu_j^{1-\nu}\right)^2|y_{ij}|^2,\\
\nonumber \|AX-XB\|_2^2&=&\sum_{i,j}\left(\lambda_i^{\nu}-\mu_j^{1-\nu}\right)^2|y_{ij}|^2,\\
\nonumber \left\|A^{\frac{k_j(\nu)}{2^{j-1}}}XB^{1-\frac{k_j(\nu)}{2^{j-1}}} \right.&-& \left.A^{\frac{k_j(\nu)+1}{2^{j-1}}}XB^{1-\frac{k_j(\nu)+1}{2^{j-1}}}\right\|_2^2\\
\nonumber&=&\sum_{i,j}\left(\lambda_i^{\frac{k_j(\nu)}{2^{j-1}}}\mu_j^{1-\frac{k_j(\nu)}{2^{j-1}}}  -\lambda_i^{\frac{k_j(\nu)+1}{2^{j-1}}}\mu_j^{1-\frac{k_j(\nu)+1}{2^{j-1}}}\right)^2|y_{ij}|^2,\\
\nonumber \|\nu AX+(1-\nu)XB\|_2^2&=&\sum_{i,j}\left(\nu \lambda_i+(1-\nu)\mu_j\right)^2|y_{ij}|^2.
\end{eqnarray}
These equations together with (\ref{squared_inequality_first}) implies the desired inequality.
\end{proof}
Following similar steps and applying Proposition \ref{reversed_square_reversed}, we obtain the following reversed version, refining the corresponding inequalities in \cite{kittanehmanasreh2}.
\begin{theorem}
Let $A,B\in\mathbb{M}_n^{+}, X\in\mathbb{M}_n,$ and $N\in\mathbb{N}.$ If $0\leq\nu\leq \frac{1}{2}$, then
\begin{eqnarray*}
\|\nu AX+(1-\nu)XB\|_2^2&+&\sum_{j=1}^{N}\left\|A^{1-\frac{k_j(2\nu)}{2^{j}}}XB^{\frac{k_j(2\nu)}{2^{j}}}  -A^{1-\frac{k_j(2\nu)+1}{2^{j}}}XB^{\frac{k_j(2\nu)+1}{2^{j}}}\right\|_2^2\\
&\leq&\|A^{\nu}XB^{1-\nu}\|_2^2+(1-\nu)^2\|AX-XB\|_2^2.
\end{eqnarray*}
On the other hand, if $\frac{1}{2}\leq\nu\leq 1,$ then
\begin{align*}
\|\nu AX+(1-\nu)XB\|_2^2&+\sum_{j=1}^{N}\left\|A^{\frac{k_j(2-2\nu)}{2^{j}}}XB^{1-\frac{k_j(2-2\nu)}{2^{j}}}  -A^{\frac{k_j(2-2\nu)+1}{2^{j}}}XB^{1-\frac{k_j(2-2\nu)+1}{2^{j}}}\right\|_2^2\\
&\leq\|A^{\nu}XB^{1-\nu}\|_2^2+\nu^2\|AX-XB\|_2^2.
\end{align*}
\end{theorem}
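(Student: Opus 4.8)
The plan is to mimic the proof of the preceding non-squared Hilbert--Schmidt theorem, replacing the numerical ingredient \eqref{squared_inequality_first} by its reversed counterpart, Proposition~\ref{reversed_square_reversed}. First I would use the spectral decompositions $A=UD_1U^*$, $B=VD_2V^*$ with $D_1=\operatorname{diag}(\lambda_i)$, $D_2=\operatorname{diag}(\mu_j)$, $\lambda_i,\mu_j\geq 0$, and set $Y=U^*XV=(y_{ij})$. Since the Hilbert--Schmidt norm is unitarily invariant, each of the four squared norms appearing in the statement becomes a weighted sum $\sum_{i,j}(\,\cdot\,)^2|y_{ij}|^2$, exactly as displayed in the previous proof: the left-hand term is $\sum_{i,j}(\nu\lambda_i+(1-\nu)\mu_j)^2|y_{ij}|^2$, the term $\|A^{\nu}XB^{1-\nu}\|_2^2$ is $\sum_{i,j}(\lambda_i^{\nu}\mu_j^{1-\nu})^2|y_{ij}|^2$, the term $\|AX-XB\|_2^2$ is $\sum_{i,j}(\lambda_i-\mu_j)^2|y_{ij}|^2$, and the generic summand norm $\bigl\|A^{1-k_j(2\nu)/2^{j}}XB^{k_j(2\nu)/2^{j}}-A^{1-(k_j(2\nu)+1)/2^{j}}XB^{(k_j(2\nu)+1)/2^{j}}\bigr\|_2^2$ equals $\sum_{i,l}\bigl(\lambda_i^{1-k_j(2\nu)/2^{j}}\mu_l^{k_j(2\nu)/2^{j}}-\lambda_i^{1-(k_j(2\nu)+1)/2^{j}}\mu_l^{(k_j(2\nu)+1)/2^{j}}\bigr)^2|y_{il}|^2$.

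Next I would fix a pair $(i,j)$ (really $(i,l)$) and apply Proposition~\ref{reversed_square_reversed} with $a=\lambda_i$, $b=\mu_j$. For $0\leq\nu\leq\frac12$ that proposition reads
$(\nu a+(1-\nu)b)^2+S_N(2\nu;ab,a^2)\leq (a^{\nu}b^{1-\nu})^2+(1-\nu)^2(a-b)^2$, and the point is to recognize that $S_N(2\nu;ab,a^2)$, when expanded via \eqref{SN_definition}, is precisely the scalar quantity $\sum_{j=1}^{N}s_j(2\nu)\bigl(a^{1-k_j(2\nu)/2^{j}}b^{k_j(2\nu)/2^{j}}-a^{1-(k_j(2\nu)+1)/2^{j}}b^{(k_j(2\nu)+1)/2^{j}}\bigr)^2$ attached to the norm in the statement (one must check the bookkeeping: replacing $a\mapsto ab$, $b\mapsto a^2$ inside $S_N$ turns $\sqrt[2^j]{b^{2^{j-1}-k_j}a^{k_j}}$ into $\sqrt[2^j]{a^{2(2^{j-1}-k_j)}(ab)^{k_j}}=a^{1-k_j(2\nu)/2^{j}}b^{k_j(2\nu)/2^{j}}$, and similarly for the second factor). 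Multiplying the scalar inequality by $|y_{ij}|^2\geq 0$ and summing over all $i,j$ then yields the claimed operator inequality, since summation preserves $\leq$ and the sums reassemble into the four Hilbert--Schmidt norms identified in the first step. The case $\frac12\leq\nu\leq 1$ is identical, using the second alternative of Proposition~\ref{reversed_square_reversed} with $S_N(2-2\nu;ab,b^2)$ and the substitution check $b\mapsto ab$, $a\mapsto b^2$ giving the exponents $k_j(2-2\nu)/2^{j}$ on $A$.

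The only genuinely delicate point is the exponent bookkeeping in the middle step: verifying that the substitution $a\mapsto ab,\ b\mapsto a^2$ (resp.\ $a\mapsto ab,\ b\mapsto b^2$) inside the defining formula \eqref{SN_definition} for $S_N$ reproduces exactly the powers of $A$ and $B$ written in the theorem's summand, including that the outer $2^j$-th root combines with the $a^2$ (resp.\ $b^2$) to produce a $2^{j}$ in the denominator of the $A$-exponent rather than a $2^{j-1}$. I expect this to be a short but careful computation; everything else (unitary invariance, diagonalization, termwise multiplication by $|y_{ij}|^2$, summation) is routine and parallels the proof of the previous theorem verbatim. No sign issues arise because $S_N$ is a positive function, so the inequality direction is preserved throughout.
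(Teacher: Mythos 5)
Your proposal matches the paper's intended argument exactly: the paper proves this theorem only by the remark that one follows the same diagonalization steps as the preceding Hilbert--Schmidt theorem and applies Proposition~\ref{reversed_square_reversed}, which is precisely what you do, and your exponent bookkeeping confirming that $S_N(2\nu;ab,a^2)$ (resp.\ $S_N(2-2\nu;ab,b^2)$) produces the summands in the statement is correct. Incidentally, your expansion correctly retains the weights $s_j(2\nu)$ coming from the definition of $S_N$, which the theorem as printed omits from the sums.
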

Further refinements of the Heinz inequality can be deduced for the Hilbert-Schmidt norm. First, we prove the following lemma, providing a significant refinement of the inequality
$$\left(a^{\nu}b^{1-\nu}+a^{1-\nu}b^{\nu}\right)^2+2\min\{\nu,1-\nu\}(a-b)^2\leq (a+b)^2,$$
proved in \cite{kittanehmanasreh}.
\begin{lemma}
Let $a,b>0$ and $N\in\mathbb{N}$. If $0\leq\nu\leq\frac{1}{2},$ then
\begin{eqnarray*}
\left(a^{\nu}b^{1-\nu}+a^{1-\nu}b^{\nu}\right)^2+2\nu(a-b)^2+S_N(2\nu;ab,a^2)+S_N(2\nu;ab,b^2)\leq (a+b)^2.
\end{eqnarray*}
On the other hand, if $\frac{1}{2}\leq\nu\leq 1,$ then
\begin{eqnarray*}
\left(a^{\nu}b^{1-\nu}+a^{1-\nu}b^{\nu}\right)^2&+&2(1-\nu)(a-b)^2+S_N(2-2\nu;ab,a^2)+S_N(2-2\nu;ab,b^2)\\
&\leq& (a+b)^2.
\end{eqnarray*}
\end{lemma}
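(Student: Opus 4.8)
The plan is to expand the square $(a+b)^2$ using the Heinz-type identity $(a+b)^2 = (a^\nu b^{1-\nu} + a^{1-\nu}b^\nu)^2 + (a-b)^2 - (a^\nu b^{1-\nu} - a^{1-\nu}b^\nu)^2$ together with the classical fact (or a direct factoring) that the discrepancy between $(a+b)^2$ and the squared Heinz mean can be controlled by a difference of squares, and then to apply the squared refinement already in hand, namely Proposition \ref{reversed_square_reversed}. The crucial observation is that $a^\nu b^{1-\nu}$ and $a^{1-\nu}b^\nu$ are, up to the symmetry recorded in the Proposition following (\ref{Rformula}), the two ``halves'' of a single Young configuration with parameter $2\nu$ (resp. $2-2\nu$) applied to the pair $(\sqrt{ab}, a)$ or $(\sqrt{ab}, b)$; this is exactly the substitution used in the proof of Theorem \ref{our_first_reverse}.

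First I would treat the case $0\le\nu\le\frac12$. I would write $a^\nu b^{1-\nu} = (\sqrt{ab})^{2\nu} a^{1-2\nu}\cdot \sqrt{b/a}^{\,0}$—more cleanly, set $x=\sqrt{ab}$ so that $a^\nu b^{1-\nu} = a\#_{\,?}\,\cdots$; concretely, in the numerical setting, note $a^{2\nu}b^{2-2\nu} = (ab)^{2\nu}\,b^{2-4\nu}$ and the analogous identity for $a^{2-2\nu}b^{2\nu}$, so that squaring the Heinz terms reduces to Young configurations $\left((ab)^{2\nu}a^{2-4\nu}\right)$ and similar. Then I would invoke Proposition \ref{reversed_square_reversed} twice: once with the pair $(a^2, b^2)$ replaced appropriately to produce the term $S_N(2\nu;ab,a^2)$, and once to produce $S_N(2\nu;ab,b^2)$, using that $(\nu a + (1-\nu)b)^2$-type expressions combine. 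Adding the two resulting inequalities, the left sides contribute $(a^\nu b^{1-\nu})^2 + (a^{1-\nu}b^\nu)^2$ plus the two $S_N$ terms plus the $(a-b)^2$ contributions, and the right sides contribute something that telescopes to $(a+b)^2$ after using $2\,a^\nu b^{1-\nu}a^{1-\nu}b^\nu = 2ab$ to complete the square. The coefficient bookkeeping on the $(a-b)^2$ terms—showing $(1-\nu)^2 + \nu^2$-type combinations collapse to the stated $2\nu$—is a short algebraic check of the same flavor as the identity invoked at the end of the proof of Theorem \ref{refined_square}.

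The case $\frac12\le\nu\le1$ is entirely symmetric: I would apply the second half of Proposition \ref{reversed_square_reversed} (the one producing $S_N(2-2\nu;ab,a^2)$ and $S_N(2-2\nu;ab,b^2)$), or simply swap $a\leftrightarrow b$ and $\nu\leftrightarrow 1-\nu$ and invoke the symmetry $S_N(\nu;a,b)=S_N(1-\nu;b,a)$ from the Proposition after (\ref{Rformula}), since the Heinz mean $a^\nu b^{1-\nu}+a^{1-\nu}b^\nu$ is itself symmetric under $\nu\mapsto1-\nu$. So essentially no new work is needed once the first case is done.

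The main obstacle I anticipate is purely organizational rather than conceptual: getting the exponents in the two applications of Proposition \ref{reversed_square_reversed} to match the $S_N$-arguments $S_N(2\nu;ab,a^2)$ and $S_N(2\nu;ab,b^2)$ exactly, and verifying that when the two inequalities are summed the cross term $2a^\nu b^{1-\nu}a^{1-\nu}b^\nu = 2ab$ together with the separately-produced $(1-\nu)^2(a-b)^2$ pieces and the squared sums $( \nu a+(1-\nu)b)^2 + (\nu b + (1-\nu)a)^2$ reassemble into precisely $(a+b)^2 - 2\nu(a-b)^2$. This is the same kind of identity-checking that appears repeatedly above (e.g.\ the displayed zero-identities in the proofs of Theorems \ref{refined_square} and \ref{reversed_square_reversed}), so I expect it to go through, but it is the step where an error is most likely to creep in and therefore the one I would write out most carefully.
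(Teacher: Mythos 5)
There is a genuine gap: you have reached for the wrong tool. Proposition \ref{reversed_square_reversed} is a \emph{reverse} inequality, and in it the term $S_N(2\nu;ab,a^2)$ sits on the \emph{small} side together with $(\nu a+(1-\nu)b)^2$, while $(a^{\nu}b^{1-\nu})^2$ and $(1-\nu)^2(a-b)^2$ sit on the \emph{large} side. So when you ``add the two resulting inequalities,'' the left sides do \emph{not} contribute $(a^{\nu}b^{1-\nu})^2+(a^{1-\nu}b^{\nu})^2$ plus the $S_N$ terms plus the $(a-b)^2$ pieces, as you claim --- those quantities are split across the two sides, and no amount of summing instances of Proposition \ref{reversed_square_reversed} will collect the Heinz square, the $(a-b)^2$ term, and both $S_N$ terms on the small side of a single inequality bounded by $(a+b)^2$. (If you instead meant the forward squared version, Theorem \ref{refined_square}, the sides do come out right, but its refining terms are the sums $\sum_{j\ge 2}s_j(\nu)(\cdots)^2$ with $2^{j-1}$-th roots, not $S_N(2\nu;ab,a^2)$ and $S_N(2\nu;ab,b^2)$, so you would prove a different lemma.)

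The terms $S_N(2\nu;ab,a^2)$ and $S_N(2\nu;ab,b^2)$ come from the \emph{unsquared} forward refinement (\ref{the_first_refinement}), applied with parameter $2\nu$ to the pairs $(ab,a^2)$ and $(ab,b^2)$:
\begin{equation*}
(ab)^{2\nu}(a^2)^{1-2\nu}+S_N(2\nu;ab,a^2)\leq 2\nu ab+(1-2\nu)a^2,\qquad
(ab)^{2\nu}(b^2)^{1-2\nu}+S_N(2\nu;ab,b^2)\leq 2\nu ab+(1-2\nu)b^2 .
\end{equation*}
Adding these, adding $2ab$ to both sides, and using the two elementary identities
\begin{equation*}
\left(a^{\nu}b^{1-\nu}+a^{1-\nu}b^{\nu}\right)^2=(ab)^{2\nu}(a^2)^{1-2\nu}+(ab)^{2\nu}(b^2)^{1-2\nu}+2ab,
\end{equation*}
\begin{equation*}
\bigl[2\nu ab+(1-2\nu)a^2\bigr]+\bigl[2\nu ab+(1-2\nu)b^2\bigr]+2ab=(a+b)^2-2\nu(a-b)^2,
\end{equation*}
gives the first case in three lines; the case $\tfrac12\leq\nu\leq 1$ is the same with $2\nu$ replaced by $2-2\nu$. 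Your instinct to split the squared Heinz mean into two Young configurations in $(ab,a^2)$ and $(ab,b^2)$ plus the cross term $2ab$ is exactly right --- the error is only in which refinement you feed those configurations into, but that error is fatal to the argument as written.
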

\begin{proof}
If $0\leq\nu\leq\frac{1}{2},$ then by virtue of (\ref{the_first_refinement}), we have
\begin{eqnarray*}
&&\hspace{-1cm}(a+b)^2-2\nu(a-b)^2\\
&=&\left[2\nu ab+(1-2\nu)a^2\right]+\left[2\nu ab+(1-2\nu)b^2\right]+2ab\\
&\geq& 2ab+\left[(ab)^{2\nu}(a^2)^{1-2\nu}+S_N(2\nu;ab,a^2)\right]+\left[(ab)^{2\nu}(b^2)^{1-2\nu}+S_N(2\nu;ab,b^2)\right]\\
&=&\left(a^{\nu}b^{1-\nu}+a^{1-\nu}b^{\nu}\right)^2+S_N(2\nu;ab,a^2)+S_N(2\nu;ab,b^2).
\end{eqnarray*}
This completes the proof for $0\leq\nu\leq\frac{1}{2}.$ Similar computations imply the desired inequality for $\frac{1}{2}\leq\nu\leq 1.$
\end{proof}
Now these inequalities entail the following refinement of the Heinz means inequality, providing a refinement of the inequality
$$\|A^{\nu}XB^{1-\nu}+A^{1-\nu}XB^{\nu}\|_2^2+2\min\{\nu,1-\nu\}\|AX-XB\|_2^2\leq\|AX+XB\|_2^2,$$ proved in \cite{kittanehmanasreh}.
\begin{theorem}
Let $A,B\in\mathbb{M}_n^{+}, X\in\mathbb{M}_n$ and $N\in\mathbb{N}$. If $0\leq\nu\leq\frac{1}{2},$ then
\begin{eqnarray*}
&&\hspace{-2cm}\|A^{\nu}XB^{1-\nu}+A^{1-\nu}XB^{\nu}\|_2^2+2\nu\|AX-XB\|_2^2\\
&+&\sum_{j=1}^{N}s_j(2\nu)\left\|A^{1-\frac{k_j(2\nu)}{2^j}}XB^{\frac{k_j(2\nu)}{2^j}}-A^{1-\frac{k_j(2\nu)+1}{2^j}}XB^{\frac{k_j(2\nu)+1}{2^j}}\right\|_2^2\\
&+&\sum_{j=1}^{N}s_j(2\nu)\left\|A^{\frac{k_j(2\nu)}{2^j}}XB^{1-\frac{k_j(2\nu)}{2^j}}-A^{\frac{k_j(2\nu)+1}{2^j}}XB^{1-\frac{k_j(2\nu)+1}{2^j}}\right\|_2^2\\
&\leq&\|AX+XB\|_2^2.
\end{eqnarray*}
If $\frac{1}{2}\leq\nu\leq 1,$ then
\begin{eqnarray*}
&&\hspace{-1cm}\|A^{\nu}XB^{1-\nu}+A^{1-\nu}XB^{\nu}\|_2^2+2(1-\nu)\|AX-XB\|_2^2\\
&+&\sum_{j=1}^{N}s_j(2-2\nu)\left\|A^{1-\frac{k_j(2-2\nu)}{2^j}}XB^{\frac{k_j(2-2\nu)}{2^j}}-A^{1-\frac{k_j(2-2\nu)+1}{2^j}}XB^{\frac{k_j(2-2\nu)+1}{2^j}}\right\|_2^2\\
&+&\sum_{j=1}^{N}s_j(2-2\nu)\left\|A^{\frac{k_j(2-2\nu)}{2^j}}XB^{1-\frac{k_j(2-2\nu)}{2^j}}-A^{\frac{k_j(2-2\nu)+1}{2^j}}XB^{1-\frac{k_j(2-2\nu)+1}{2^j}}\right\|_2^2\\
&\leq&\|AX+XB\|_2^2.
\end{eqnarray*}
\end{theorem}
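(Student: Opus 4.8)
The plan is to derive this operator statement from the scalar inequality of the preceding Lemma by the same diagonalisation argument already used in the two Hilbert--Schmidt theorems above; there is no genuinely hard step here. Since $A,B\in\mathbb{M}_n^{+}$, write $A=UD_1U^{*}$ and $B=VD_2V^{*}$ with $U,V$ unitary, $D_1=\operatorname{diag}(\lambda_1,\dots,\lambda_n)$ and $D_2=\operatorname{diag}(\mu_1,\dots,\mu_n)$, all $\lambda_i,\mu_j\ge 0$, and put $Y=U^{*}XV=(y_{ij})$. Since $A^{t}XB^{s}=U\bigl(D_1^{t}\,Y\,D_2^{s}\bigr)V^{*}$ and $\|\cdot\|_2$ is unitarily invariant, every Hilbert--Schmidt norm in the statement becomes a weighted double sum with weights $|y_{ij}|^{2}\ge 0$; in particular
$$\|A^{\nu}XB^{1-\nu}+A^{1-\nu}XB^{\nu}\|_2^{2}=\sum_{i,j}\bigl(\lambda_i^{\nu}\mu_j^{1-\nu}+\lambda_i^{1-\nu}\mu_j^{\nu}\bigr)^{2}|y_{ij}|^{2},$$
while $\|AX-XB\|_2^{2}=\sum_{i,j}(\lambda_i-\mu_j)^{2}|y_{ij}|^{2}$ and $\|AX+XB\|_2^{2}=\sum_{i,j}(\lambda_i+\mu_j)^{2}|y_{ij}|^{2}$.

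Next I would rewrite the two multi-term sums. Expanding $S_N$ by (\ref{SN_definition}) with first slot $ab$ and second slot $a^{2}$ and simplifying the radicals gives, for $1\le\ell\le N$,
$$\sqrt[2^{\ell}]{(a^{2})^{2^{\ell-1}-k_\ell(2\nu)}(ab)^{k_\ell(2\nu)}}=a^{1-k_\ell(2\nu)/2^{\ell}}\,b^{k_\ell(2\nu)/2^{\ell}},$$
and likewise with $k_\ell(2\nu)$ replaced by $k_\ell(2\nu)+1$ for the other radical, so that
$$S_N(2\nu;ab,a^{2})=\sum_{\ell=1}^{N}s_\ell(2\nu)\Bigl(a^{1-\frac{k_\ell(2\nu)}{2^{\ell}}}b^{\frac{k_\ell(2\nu)}{2^{\ell}}}-a^{1-\frac{k_\ell(2\nu)+1}{2^{\ell}}}b^{\frac{k_\ell(2\nu)+1}{2^{\ell}}}\Bigr)^{2};$$
interchanging the roles of $a$ and $b$ yields the companion identity for $S_N(2\nu;ab,b^{2})$, with exponents $\tfrac{k_\ell(2\nu)}{2^{\ell}},\tfrac{k_\ell(2\nu)+1}{2^{\ell}}$ on $a$ and $1-\tfrac{k_\ell(2\nu)}{2^{\ell}},1-\tfrac{k_\ell(2\nu)+1}{2^{\ell}}$ on $b$. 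Putting $a=\lambda_i$, $b=\mu_j$ and interchanging the order of the two finite summations, one sees that the first multi-term sum in the statement equals $\sum_{i,j}|y_{ij}|^{2}\,S_N(2\nu;\lambda_i\mu_j,\lambda_i^{2})$ and the second equals $\sum_{i,j}|y_{ij}|^{2}\,S_N(2\nu;\lambda_i\mu_j,\mu_j^{2})$.

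Now I would apply the first half of the Lemma with $a=\lambda_i$, $b=\mu_j$ for each index pair, i.e.
$$(\lambda_i^{\nu}\mu_j^{1-\nu}+\lambda_i^{1-\nu}\mu_j^{\nu})^{2}+2\nu(\lambda_i-\mu_j)^{2}+S_N(2\nu;\lambda_i\mu_j,\lambda_i^{2})+S_N(2\nu;\lambda_i\mu_j,\mu_j^{2})\le(\lambda_i+\mu_j)^{2},$$
multiply by $|y_{ij}|^{2}\ge 0$, and sum over $i,j$; combining this with the identities of the first two paragraphs gives the claimed inequality for $0\le\nu\le\frac12$. The range $\frac12\le\nu\le 1$ is handled identically, invoking the second half of the Lemma and replacing $2\nu$ by $2-2\nu$ throughout.

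The only point needing attention is that the Lemma is stated for $a,b>0$ whereas elements of $\mathbb{M}_n^{+}$ may be singular, so some $\lambda_i$ or $\mu_j$ might vanish. I would bypass this by the standard perturbation: apply the inequality just obtained to $A+\varepsilon I$ and $B+\varepsilon I$, which lie in $\mathbb{M}_n^{++}$, and let $\varepsilon\to 0^{+}$; every exponent occurring lies in $[0,1]$, so all terms are continuous in $\varepsilon$ (alternatively, each scalar inequality of the Lemma extends to $a=0$ or $b=0$ by continuity of both sides). I expect the only mildly laborious part to be the radical bookkeeping in the second paragraph---confirming that $S_N(2\nu;ab,a^{2})$ and $S_N(2\nu;ab,b^{2})$ collapse exactly to the exponents displayed in the theorem; the remainder is the diagonalise-and-sum scheme already executed twice above.
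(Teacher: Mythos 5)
Your proposal is correct and follows essentially the same route the paper intends: the paper states this theorem without a separate proof, relying on the diagonalization-and-summation scheme it carried out explicitly for the earlier Hilbert--Schmidt theorems, applied now to the scalar Lemma immediately preceding; your radical bookkeeping showing that $S_N(2\nu;ab,a^2)$ and $S_N(2\nu;ab,b^2)$ collapse to the displayed exponents checks out. Your added remark on handling singular $A,B$ by perturbation is a harmless (and slightly more careful) supplement to what the paper leaves implicit.
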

Refined reverses of the Heinz means can be found as well using Theorem \ref{our_first_reverse}.
\begin{lemma}
Let $a,b>0$ and $N\in\mathbb{N}$. If $0\leq \nu\leq\frac{1}{4},$ then
\begin{eqnarray*}
&&\hspace{-1cm}(a+b)^2+S_N(4\nu;\sqrt{a^3b},ab)+S_N(4\nu;\sqrt{ab^3},ab)\\
&\leq&\left(a^{\nu}b^{1-\nu}+a^{1-\nu}b^{\nu}\right)^2+2\nu(a-b)^2+(1-2\nu)\left[\left(\sqrt{ab}-a\right)^2+\left(\sqrt{ab}-b\right)^2\right].
\end{eqnarray*}
If $\frac{1}{4}\leq \nu\leq \frac{1}{2},$ then
\begin{eqnarray*}
&&\hspace{-1.5cm}(a+b)^2+S_N\left(2-4\nu;\sqrt{a^3b},a^2\right)+S_N\left(2-4\nu;\sqrt{ab^3},b^2\right)\\
&\leq&\left(a^{\nu}b^{1-\nu}+a^{1-\nu}b^{\nu}\right)^2+2\nu(a-b)^2+2\nu\left[\left(\sqrt{ab}-a\right)^2+\left(\sqrt{ab}-b\right)^2\right].
\end{eqnarray*}
If $\frac{1}{2}\leq \nu\leq \frac{3}{4},$ then
\begin{eqnarray*}
&&\hspace{-1cm}(a+b)^2+S_N\left(4\nu-2;\sqrt{a^3b},a^2\right)+S_N\left(4\nu-2;\sqrt{ab^3},b^2\right)\\
&\leq&\left(a^{\nu}b^{1-\nu}+a^{1-\nu}b^{\nu}\right)^2+2(1-\nu)(a-b)^2+(2-2\nu)\left[\left(\sqrt{ab}-a\right)^2+\left(\sqrt{ab}-b\right)^2\right].
\end{eqnarray*}
If $\frac{3}{4}\leq \nu\leq 1,$ then
\begin{eqnarray*}
&&\hspace{-1cm}(a+b)^2+S_N(4-4\nu;\sqrt{a^3b},ab)+S_N(4-4\nu;\sqrt{ab^3},ab)\\
&\leq&\left(a^{\nu}b^{1-\nu}+a^{1-\nu}b^{\nu}\right)^2+2(1-\nu)(a-b)^2+(2\nu-1)\left[\left(\sqrt{ab}-a\right)^2+\left(\sqrt{ab}-b\right)^2\right].
\end{eqnarray*}
\end{lemma}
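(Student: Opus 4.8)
The plan is to reduce everything to Proposition \ref{reversed_square_reversed} applied with its variable $a$ specialized to $\sqrt{ab}$, exploiting that the Heinz mean is invariant under $\nu\mapsto 1-\nu$. Write $H:=\bigl(a^{\nu}b^{1-\nu}+a^{1-\nu}b^{\nu}\bigr)^{2}$ and $P:=(\sqrt{ab}-a)^{2}+(\sqrt{ab}-b)^{2}$, and note that $H$, $(a-b)^{2}$, $P$ and the quantities $\sqrt{a^{3}b}$, $\sqrt{ab^{3}}$, $a^{2}$, $b^{2}$, $ab$ are all unchanged when $\nu$ is replaced by $1-\nu$. Consequently the substitution $\nu\mapsto 1-\nu$ carries the first case of the lemma onto the fourth and the second onto the third, once one checks that the $S_{N}$-arguments and the coefficients of $(a-b)^{2}$ and $P$ transform correctly. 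Thus it suffices to prove the inequality for $0\le\nu\le\frac{1}{4}$ and for $\frac{1}{4}\le\nu\le\frac{1}{2}$.

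For $0\le\nu\le\frac{1}{4}$ we have $0\le 2\nu\le\frac{1}{2}$, and I would apply the first inequality of Proposition \ref{reversed_square_reversed} twice: once with $(\nu,a,b)$ replaced by $(2\nu,\sqrt{ab},a)$ and once by $(2\nu,\sqrt{ab},b)$. Using $(\sqrt{ab})^{2\nu}a^{1-2\nu}=a^{1-\nu}b^{\nu}$, $(\sqrt{ab})^{2\nu}b^{1-2\nu}=a^{\nu}b^{1-\nu}$, $\sqrt{ab}\cdot a=\sqrt{a^{3}b}$ and $\sqrt{ab}\cdot b=\sqrt{ab^{3}}$, these become
\begin{align*}
\bigl(2\nu\sqrt{ab}+(1-2\nu)a\bigr)^{2}+S_{N}\bigl(4\nu;\sqrt{a^{3}b},ab\bigr)&\le a^{2-2\nu}b^{2\nu}+(1-2\nu)^{2}(\sqrt{ab}-a)^{2},\\
\bigl(2\nu\sqrt{ab}+(1-2\nu)b\bigr)^{2}+S_{N}\bigl(4\nu;\sqrt{ab^{3}},ab\bigr)&\le a^{2\nu}b^{2-2\nu}+(1-2\nu)^{2}(\sqrt{ab}-b)^{2}.
\end{align*}
Adding these two inequalities, using $H=a^{2\nu}b^{2-2\nu}+a^{2-2\nu}b^{2\nu}+2ab$, and then adding $(a+b)^{2}-\bigl(2\nu\sqrt{ab}+(1-2\nu)a\bigr)^{2}-\bigl(2\nu\sqrt{ab}+(1-2\nu)b\bigr)^{2}$ to both sides, the claim reduces to the elementary identity
\begin{align*}
&(a+b)^{2}-\bigl(2\nu\sqrt{ab}+(1-2\nu)a\bigr)^{2}-\bigl(2\nu\sqrt{ab}+(1-2\nu)b\bigr)^{2}\\
&\qquad=2ab+2\nu(a-b)^{2}+2\nu(1-2\nu)\,P,
\end{align*}
combined with the numerical fact $(1-2\nu)^{2}+2\nu(1-2\nu)=1-2\nu$, which is precisely the coefficient of $P$ appearing in the first case. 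The displayed identity is verified by direct expansion (for instance, setting $a=x^{2}$, $b=y^{2}$, so that $\sqrt{ab}=xy$, turns it into a polynomial identity in $x,y$).

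The range $\frac{1}{4}\le\nu\le\frac{1}{2}$ is handled in exactly the same way, except that now $\frac{1}{2}\le 2\nu\le 1$, so one invokes the \emph{second} inequality of Proposition \ref{reversed_square_reversed} with the same two substitutions $(2\nu,\sqrt{ab},a)$ and $(2\nu,\sqrt{ab},b)$. This replaces the coefficient $(1-2\nu)^{2}$ above by $(2\nu)^{2}$ and yields precisely the $S_{N}$-terms $S_{N}(2-4\nu;\sqrt{a^{3}b},a^{2})$ and $S_{N}(2-4\nu;\sqrt{ab^{3}},b^{2})$ of the statement; the \emph{same} displayed identity then closes the argument, since $(2\nu)^{2}+2\nu(1-2\nu)=2\nu$, which is the coefficient of $P$ in the second case. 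I do not anticipate any genuine difficulty: once Proposition \ref{reversed_square_reversed} is granted the whole proof is a chain of exact identities, and the only point requiring care is the bookkeeping — verifying that the substituted arguments of $S_{N}$ collapse to the exact expressions printed in the lemma and that the coefficients of $(a-b)^{2}$ and $P$ come out right in each of the two parameter ranges (this is exactly the role of the two identities $(1-2\nu)^{2}+2\nu(1-2\nu)=1-2\nu$ and $(2\nu)^{2}+2\nu(1-2\nu)=2\nu$).
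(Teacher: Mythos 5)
Your proof is correct, and it is in substance the same argument the paper intends: the paper prints no proof of this lemma, only the remark that it follows from Theorem~\ref{our_first_reverse}, and your Proposition~\ref{reversed_square_reversed} is nothing but Theorem~\ref{our_first_reverse} with $a,b$ replaced by $a^2,b^2$, so your two applications with $(2\nu,\sqrt{ab},a)$ and $(2\nu,\sqrt{ab},b)$ coincide with applying Theorem~\ref{our_first_reverse} to the pairs $(ab,a^2)$ and $(ab,b^2)$ at parameter $2\nu$; in particular the $S_N$ arguments and the $\nu\mapsto 1-\nu$ symmetry reduction all check out. The only difference is cosmetic: going through the squared proposition forces you to verify the quadratic identity $(a+b)^2-\bigl(2\nu\sqrt{ab}+(1-2\nu)a\bigr)^2-\bigl(2\nu\sqrt{ab}+(1-2\nu)b\bigr)^2=2ab+2\nu(a-b)^2+2\nu(1-2\nu)P$ together with the coefficient collapses $(1-2\nu)^2+2\nu(1-2\nu)=1-2\nu$ and $(2\nu)^2+2\nu(1-2\nu)=2\nu$ (all of which are correct), whereas applying Theorem~\ref{our_first_reverse} directly to $(2\nu,ab,a^2)$ and $(2\nu,ab,b^2)$ needs only the linear decomposition $(a+b)^2-2\nu(a-b)^2=\bigl[2\nu ab+(1-2\nu)a^2\bigr]+\bigl[2\nu ab+(1-2\nu)b^2\bigr]+2ab$, exactly mirroring the proof of the preceding forward lemma.
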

These inequalities entail reversed versions of the Heinz inequality. Since the idea is the same, we present the inequality for the interval $0\leq\nu\leq\frac{1}{4}.$
\begin{theorem}
Let $A,B\in\mathbb{M}_n^{+}, X\in\mathbb{M}_n$ and $N\geq 2$. If $0\leq\nu\leq\frac{1}{4},$ then
\begin{eqnarray*}
&&\|AX+XB\|_2^2+\sum_{j=2}^{N}s_j(4\nu)\left\|A^{\frac{1}{2}+\frac{k_j(4\nu)}{2^{j+1}}}XB^{\frac{1}{2}
-\frac{k_j(4\nu)}{2^{j+1}}}-A^{\frac{1}{2}+\frac{k_j(4\nu)+1}{2^{j+1}}}XB^{\frac{1}{2}-\frac{k_j(4\nu)+1}{2^{j+1}}}\right\|_2^2\\
&&\hspace{2.6cm}+\sum_{j=2}^{N}s_j(4\nu)\left\|A^{\frac{1}{2}
-\frac{k_j(4\nu)}{2^{j+1}}}XB^{\frac{1}{2}+\frac{k_j(4\nu)}{2^{j+1}}}-A^{\frac{1}{2}-\frac{k_j(4\nu)+1}{2^{j+1}}}XB^{\frac{1}{2}+\frac{k_j(4\nu)+1}{2^{j+1}}}\right\|_2^2\\
&&\leq\|A^{\nu}XB^{1-\nu}+A^{1-\nu}XB^{\nu}\|_2^2+2\nu\|AX-XB\|_2^2\\
&&\hspace{2.5cm}+(1-2\nu)\left(\|A^{\frac{1}{2}}XB^{\frac{1}{2}}-AX\|_2^2+\|A^{\frac{1}{2}}XB^{\frac{1}{2}}-XB\|_2^2\right)
\end{eqnarray*}
\end{theorem}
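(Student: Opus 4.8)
The plan is to derive this operator inequality from the preceding Lemma, applied in the case $0\le\nu\le\frac14$, by exactly the diagonalization argument used for the earlier Hilbert--Schmidt theorems in this section. Since $A,B\in\mathbb{M}_n^{+}$, one writes $A=UD_1U^{*}$, $B=VD_2V^{*}$ with $U,V$ unitary, $D_1=\operatorname{diag}(\lambda_i)$, $D_2=\operatorname{diag}(\mu_k)$, $\lambda_i,\mu_k\ge 0$, and puts $Y=U^{*}XV=(y_{ik})$. Unitary invariance of $\|\cdot\|_2$ then turns every norm in the statement into a weighted sum over the pairs $(\lambda_i,\mu_k)$: for instance $\|AX+XB\|_2^2=\sum_{i,k}(\lambda_i+\mu_k)^2|y_{ik}|^2$, $\|A^{p}XB^{1-p}-A^{q}XB^{1-q}\|_2^2=\sum_{i,k}(\lambda_i^{p}\mu_k^{1-p}-\lambda_i^{q}\mu_k^{1-q})^2|y_{ik}|^2$, and likewise for $\|AX-XB\|_2^2$, $\|A^{1/2}XB^{1/2}-AX\|_2^2$, $\|A^{1/2}XB^{1/2}-XB\|_2^2$ and $\|A^{\nu}XB^{1-\nu}+A^{1-\nu}XB^{\nu}\|_2^2$.

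The bookkeeping step is to expand the functions $S_N(4\nu;\sqrt{a^{3}b},ab)$ and $S_N(4\nu;\sqrt{ab^{3}},ab)$ straight from \eqref{SN_definition} and read off the exponents. A short computation with $2^{j}$-th roots shows that the $j$-th summand of $S_N(4\nu;\sqrt{a^{3}b},ab)$ equals
$$s_j(4\nu)\Bigl(a^{\frac12+\frac{k_j(4\nu)}{2^{j+1}}}b^{\frac12-\frac{k_j(4\nu)}{2^{j+1}}}-a^{\frac12+\frac{k_j(4\nu)+1}{2^{j+1}}}b^{\frac12-\frac{k_j(4\nu)+1}{2^{j+1}}}\Bigr)^2,$$
since $\tfrac{2(2^{j-1}-k_j(4\nu))+3k_j(4\nu)}{2^{j+1}}=\tfrac12+\tfrac{k_j(4\nu)}{2^{j+1}}$ and the complementary exponent of $b$ is $\tfrac12-\tfrac{k_j(4\nu)}{2^{j+1}}$; the summands of $S_N(4\nu;\sqrt{ab^{3}},ab)$ are the same expression with the roles of $a$ and $b$ interchanged. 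Under the dictionary of the first paragraph these are precisely the summands of the two sums $\sum_j s_j(4\nu)\|\cdots\|_2^2$ occurring in the theorem, with the one difference that in the theorem $j$ runs over $2,\dots,N$ whereas in $S_N$ it runs over $1,\dots,N$.

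With the dictionary in place, I would fix a pair $(i,k)$, apply the Lemma (case $0\le\nu\le\frac14$) with $a=\lambda_i$ and $b=\mu_k$ — when $\lambda_i=0$ or $\mu_k=0$ this is the boundary case, which is legitimate because every exponent occurring lies in $[0,1]$ and the inequality is continuous on $[0,\infty)^2$ — and then discard from the left side the two $j=1$ contributions $s_1(4\nu)(\sqrt{\lambda_i\mu_k}-\lambda_i^{3/4}\mu_k^{1/4})^2$ and $s_1(4\nu)(\sqrt{\lambda_i\mu_k}-\lambda_i^{1/4}\mu_k^{3/4})^2$. Since $s_1(4\nu)\ge 0$ for $0\le\nu\le\frac14$ (immediate from the definition of $s_1$), dropping these only weakens the scalar inequality and leaves sums indexed by $j=2,\dots,N$. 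Multiplying the resulting inequality by $|y_{ik}|^2\ge 0$, summing over all $i,k$, and re-assembling the weighted sums into Hilbert--Schmidt norms via the identities of the first paragraph produces exactly the asserted operator inequality.

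The only genuinely delicate points are the exponent identification that matches the summands of $S_N(4\nu;\sqrt{a^{3}b},ab)$ and $S_N(4\nu;\sqrt{ab^{3}},ab)$ with the norm terms $\|A^{\frac12\pm\frac{k_j(4\nu)}{2^{j+1}}}XB^{\frac12\mp\frac{k_j(4\nu)}{2^{j+1}}}-\cdots\|_2^2$, and the (harmless) observation that the $j=1$ terms of those two functions are nonnegative and may be omitted; everything else is the routine ``diagonalize, apply the scalar inequality entrywise, sum'' template already used for the other Hilbert--Schmidt results in this section.
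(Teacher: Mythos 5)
Your proof is correct and follows exactly the route the paper intends: the paper omits the argument with the remark that ``the idea is the same'' as the earlier Hilbert--Schmidt theorems, namely diagonalize $A$ and $B$, apply the scalar lemma to each eigenvalue pair, and sum. Your exponent bookkeeping identifying the summands of $S_N(4\nu;\sqrt{a^3b},ab)$ and $S_N(4\nu;\sqrt{ab^3},ab)$ with the norm terms is right, and discarding the nonnegative $j=1$ terms correctly accounts for the sums starting at $j=2$.
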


\end{document}